\newtheorem*{cor}{Corollary}
\newtheorem*{lem}{Lemma}
\newtheorem*{prop}{Proposition}
\theoremstyle{definition}
\newtheorem*{defn}{Definition}
\theoremstyle{definition}
\newtheorem{thm}{Theorem}
\newtheorem*{rem}{Remark}
\newenvironment{pf}{\proof}{\endproof}
\newcounter{cnt}
\newenvironment{enumerit}{\begin{list}{{\hfill\rm(\roman{cnt})\hfill}}{%
\settowidth{\labelwidth}{{\rm(iv)}}\leftmargin=\labelwidth%
\advance\leftmargin by \labelsep\rightmargin=0pt\usecounter{cnt}}}{\end{list}} \makeatletter
\def\mydggeometry{\makeatletter\dg@YGRID=1\dg@XGRID=20\unitlength=0.003pt\makeatother}
\makeatother \theoremstyle{remark}
\numberwithin{equation}{section}
\let\bwdg\bigwedge
\def\bigwedge{{\textstyle\bwdg}}
\begin{document}

\newcommand{\thmref}[1]{Theorem~\ref{#1}}
\newcommand{\secref}[1]{Section~\ref{#1}}
\newcommand{\lemref}[1]{Lemma~\ref{#1}}
\newcommand{\propref}[1]{Proposition~\ref{#1}}
\newcommand{\corref}[1]{Corollary~\ref{#1}}
\newcommand{\remref}[1]{Remark~\ref{#1}}
\newcommand{\defref}[1]{Definition~\ref{#1}}
\newcommand{\er}[1]{(\ref{#1})}
\newcommand{\id}{\operatorname{id}}
\newcommand{\ord}{\operatorname{\emph{ord}}}
\newcommand{\sgn}{\operatorname{sgn}}
\newcommand{\wt}{\operatorname{wt}}
\newcommand{\tensor}{\otimes}
\newcommand{\from}{\leftarrow}
\newcommand{\nc}{\newcommand}
\newcommand{\rnc}{\renewcommand}
\newcommand{\dist}{\operatorname{dist}}
\newcommand{\qbinom}[2]{\genfrac[]{0pt}0{#1}{#2}}
\nc{\cal}{\mathcal} \nc{\goth}{\mathfrak} \rnc{\bold}{\mathbf}
\renewcommand{\frak}{\mathfrak}
\newcommand{\supp}{\operatorname{supp}}
\newcommand{\Irr}{\operatorname{Irr}}
\renewcommand{\Bbb}{\mathbb}
\nc\bomega{{\mbox{\boldmath $\omega$}}} \nc\bpsi{{\mbox{\boldmath $\Psi$}}}
 \nc\balpha{{\mbox{\boldmath $\alpha$}}}
 \nc\bpi{{\mbox{\boldmath $\pi$}}}
 \nc\bvpi{{\mbox{\boldmath $\varpi$}}}
 \nc\bxi{{\mbox{\boldmath $\xi$}}}
  \nc\bmu{{\mbox{\boldmath $\mu$}}}
\nc\bsigma{{\mbox{\boldmath $\sigma$}}} \nc\bcN{{\mbox{\boldmath $\cal{N}$}}} \nc\bcm{{\mbox{\boldmath $\cal{M}$}}} \nc\bLambda{{\mbox{\boldmath
$\Lambda$}}}
\def\tqbinom#1#2{\text{$\left[\begin{smallmatrix} #1\\#2\end{smallmatrix}\right]$}}
\newcommand{\lie}[1]{\mathfrak{#1}}
\makeatletter
\def\section{\def\@secnumfont{\mdseries}\@startsection{section}{1}%
  \z@{.7\linespacing\@plus\linespacing}{.5\linespacing}%
  {\normalfont\scshape\centering}}
\def\subsection{\def\@secnumfont{\bfseries}\@startsection{subsection}{2}%
  {\parindent}{.5\linespacing\@plus.7\linespacing}{-.5em}%
  {\normalfont\bfseries}}
\makeatother
\def\subl#1{\subsection{}\label{#1}}
 \nc{\Hom}{\operatorname{Hom}}
  \nc{\mode}{\operatorname{mod}}
\nc{\End}{\operatorname{End}} \nc{\wh}[1]{\widehat{#1}} \nc{\Ext}{\operatorname{Ext}} \nc{\ch}{\text{ch}} \nc{\ev}{\operatorname{ev}}
\nc{\Ob}{\operatorname{Ob}} \nc{\soc}{\operatorname{soc}} \nc{\rad}{\operatorname{rad}} \nc{\head}{\operatorname{head}}
\def\Im{\operatorname{Im}}
\def\gr{\operatorname{gr}}
\def\mult{\operatorname{mult}}
\def\Max{\operatorname{Max}}
\def\ann{\operatorname{Ann}}
\def\sym{\operatorname{sym}}
\def\Res{\operatorname{\br^\lambda_A}}
\def\und{\underline}
\def\Lietg{$A_k(\lie{g})(\bsigma,r)$}

 \nc{\Cal}{\cal} \nc{\Xp}[1]{X^+(#1)} \nc{\Xm}[1]{X^-(#1)}
\nc{\on}{\operatorname} \nc{\Z}{{\bold Z}} \nc{\J}{{\cal J}} \nc{\C}{{\bold C}} \nc{\Q}{{\bold Q}}
\renewcommand{\P}{{\cal P}}
\nc{\N}{{\Bbb N}} \nc\boa{\bold a} \nc\bob{\bold b} \nc\boc{\bold c} \nc\bod{\bold d} \nc\boe{\bold e} \nc\bof{\bold f} \nc\bog{\bold g}
\nc\boh{\bold h} \nc\boi{\bold i} \nc\boj{\bold j} \nc\bok{\bold k} \nc\bol{\bold l} \nc\bom{\bold m} \nc\bon{\bold n} \nc\boo{\bold o}
\nc\bop{\bold p} \nc\boq{\bold q} \nc\bor{\bold r} \nc\bos{\bold s} \nc\boT{\bold t} \nc\boF{\bold F} \nc\bou{\bold u} \nc\bov{\bold v}
\nc\bow{\bold w} \nc\boz{\bold z} \nc\boy{\bold y} \nc\ba{\bold A} \nc\bb{\bold B} \nc\bc{\bold C} \nc\bd{\bold D} \nc\be{\bold E} \nc\bg{\bold
G} \nc\bh{\bold H} \nc\bi{\bold I} \nc\bj{\bold J} \nc\bk{\bold K} \nc\bl{\bold L} \nc\bm{\bold M} \nc\bn{\bold N} \nc\bo{\bold O} \nc\bp{\bold
P} \nc\bq{\bold Q} \nc\br{\bold R} \nc\bs{\bold S} \nc\bt{\bold T} \nc\bu{\bold U} \nc\bv{\bold V} \nc\bw{\bold W} \nc\bz{\bold Z} \nc\bx{\bold
x} \nc\KR{\bold{KR}} \nc\rk{\bold{rk}} \nc\het{\text{ht }}

\nc\toa{\tilde a} \nc\tob{\tilde b} \nc\toc{\tilde c} \nc\tod{\tilde d} \nc\toe{\tilde e} \nc\tof{\tilde f} \nc\tog{\tilde g} \nc\toh{\tilde h}
\nc\toi{\tilde i} \nc\toj{\tilde j} \nc\tok{\tilde k} \nc\tol{\tilde l} \nc\tom{\tilde m} \nc\ton{\tilde n} \nc\too{\tilde o} \nc\toq{\tilde q}
\nc\tor{\tilde r} \nc\tos{\tilde s} \nc\toT{\tilde t} \nc\tou{\tilde u} \nc\tov{\tilde v} \nc\tow{\tilde w} \nc\toz{\tilde z} \nc\woi{w_{\omega_i}}
\newcommand{\nn}{\nonumber}

\title{Prime Representations from a Homological Perspective}
\author{Vyjayanthi Chari, Adriano Moura, Charles Young}
\address{Department of Mathematics, University of California, Riverside, CA 92521, U.S.A.}\email{vyjayanthi.chari@ucr.edu}\thanks{V.C. was partially supported by DMS-0901253}
\address{Departamento de Matemática, Universidade Estadual de Campinas, Campinas - SP - Brazil, 13083-859.}\email{aamoura@ime.unicamp.br}\thanks{A.M. was partially supported by CNPq grant 306678/2008-0}
\address{Department of Mathematics University of York, York, YO10 5DD, U.K.}
\email{casy500@york.ac.uk}
\thanks{C.A.S.Y was supported by EPSRC grant number EP/H000054/1}\maketitle
\begin{abstract}
We begin the study of simple finite-dimensional prime representations of quantum affine algebras from a homological perspective. Namely, we explore the relation between self extensions of simple representations and the property of being prime. We show that every nontrivial simple module has a nontrivial self extension. Conversely, if a simple representation has a unique nontrivial self extension up to isomorphism, then its Drinfeld polynomial is a power of the Drinfeld polynomial of a prime representation. It turns out that, in the sl(2) case, a simple module is prime if and only if it has a unique nontrivial self extension up to isomorphism. It is tempting to conjecture that this is true in general and we present a large class of prime representations satisfying this homological property.\end{abstract}

\section*{Introduction}The study of finite-dimensional representations of quantum affine algebras has been an active field of research
for at least two decades. The abstract  classification of the simple representations was given in \cite{CPqa}, \cite{CPbanff}, and  much of the  subsequent work has focused on understanding the structure of these  representations. This has proved to be a difficult task and a complete understanding outside the case of $\lie{sl}_2$ is still some distance away. A number of important methods have been developed:  for instance, the work of \cite{FM}, \cite{FR} on $q$--characters  has resulted in a deeper combinatorial  understanding of these representations. The geometric approach of H. Nakajima and the theory of crystal bases  of M. Kashiwara have also been very fruitful.
Another powerful tool is the T--system \cite{Her06,KNS,Nak03},
 which was recently shown \cite{MY} to extend beyond Kirillov-Reshetikhin modules to wider classes of representations.
A connection with the theory of cluster algebras has been established recently in \cite{herlec,Nak:cluster}.

The study of the structure of the irreducible representations can be reduced to the so called prime ones, namely those simple representations which cannot be written as a tensor product of two non-trivial simple representations.
Clearly any finite--dimensional simple representation can be written as a tensor product of simple prime representations and one could then focus on understanding the prime representations. This was the approach used in \cite{CPqa} for the $\lie{sl}_2$-case, but  generalizing this approach is very difficult. However, many examples of prime representations are known in general, for example the Kirillov--Reshethikhin modules are prime and, more generally, the minimal affinizations are also prime and other examples may be found for instance in \cite{herlec}. However, except in the $\lie{sl}_2$-case where the simple prime representations are precisely the Kirillov-Reshetikhin modules (which are also the evaluation modules), the classification of the prime representations is not known.

 This paper is motivated by an effort to understand the simple prime representations via homological properties. Thus, let $\hat{\cal F}$ be the category of finite-dimensional representations of the quantum affine algebra and denote by $V(\bpi)$ the irreducible representation associated to the Drinfeld polynomial $\bpi$. We construct in a natural way a non-trivial self-extension of any object $V$ of $\hat{\cal F}$ which motivates the natural question of characterizing  the simple objects which satisfy \begin{equation}\label{intro1}\tag{1}\dim\Ext^1_{\hat{\cal F}}(V, V)=1.\end{equation}
Our first result shows that any simple $V$ satisfying \eqref{intro1} is of the form $V(\bpi_0^s)$ for some $s\ge 1$ where $\bpi_0$ is such that $V(\bpi_0)$ is prime. Hence,  if $V(\bpi_0)$ is a real prime in the sense of \cite{herlec}, then using \cite{her:simpletp} we see that $V$ is a tensor power of  isomorphic of  $V(\bpi_0)$.

In the case of $\lie{sl}_2$  we prove the stronger result that a simple object  $V$ satisfies \eqref{intro1} if and only if $V$ is prime. It is natural and now obviously interesting to ask if such a result remains true for general $\lie g$. Our next result provides partial evidence for this to be true. Namely, we prove for a large family of simple prime representations including the minimal affinizations   that the space of self-extensions is one-dimensional. Our results go beyond minimal affinizations and we  prove that the representations $S(\beta)$ defined in \cite{herlec} have a one--dimensional space of extensions  as long as $\beta$ is a positive root in which every simple root occurs with multiplicity one. 
It is worth comparing the results of this paper with their  non-quantum counter parts. One can define in a similar way the notion of prime representations for the category  of finite-dimensional representations of an affine Lie algebra $\hat{\lie g}$. It is known through the work of \cite{CG}, \cite{Kodera}  that if $V,V'$ are irreducible finite-dimensional representations  of $\hat{\lie g}$, then  $$\Ext^1_{\hat{\lie g}}(V,V')\cong\Hom_{\lie g}(\lie g\otimes V, V').$$
 It is now easily seen that there exist examples of simple prime representations $V$ such that $\Ext^1_{\hat{\lie g}}(V,V)$ has dimension at least two. In Section 2, we give an example of a simple representation of the quantum affine algebra which has a one--dimensional space of self--extensions but whose classical limit, although also prime and simple, has a two dimensional space of self--extensions.

The paper is organized as follows. In Section \ref{s:pre} we recall the definition and some standard results on quantum affine algebras. In Section \ref{s:main} we give  review some results  on finite-dimensional representations of the quantum affine algebra and state the main results of the paper. In Section \ref{s:E(V)} we construct a self extension of any given module of the quantum affine algebra.  We prove that if the  module is simple and finite-dimensional  then the extension is nontrivial. We also give the previously mentioned condition on the Drinfeld polynomials of a simple representation  which  satisfy \eqref{intro1}.

In Section \ref{s:weyl}   we first review  results on local and global Weyl modules. We then  study the relationship  between  these modules and self extensions of simple representations. In particular, we compute the dimension of the space of self extensions of the local Weyl modules. In Section \ref{s:tp} we prove that the space of self extensions of a simple module is a subspace of the self--extensions of the corresponding Weyl module. This allows us to prove that a self--extension of a simple module is ``determined'' by its top weight space and this plays a crucial role in proving the remaining results of the paper. It allows  to study the relationship between self extensions and tensor products of simple modules which in turn establishes the condition for a simple representation to have a one--dimensional space of self--extensions.   In Section \ref{s:sl2} we prove that, in the $\lie{sl}_2$ case, a simple module $V$ satisfies \eqref{intro1} if and only if $V$ is prime.
The last section is dedicated to the proof that a certain class of  simple modules satisfy \eqref{intro1}. Our results show that this implies that these modules are prime. The latter fact that the modules are prime can also be proved by other methods as well, see \cite{herlec} for the modules of type $S(\beta)$ and \cite{Cher} for remarks on  minimal affinizations. Our goal here is really to provide evidence towards the conjecture for general $\lie g$, that  $V$ satisfies  \eqref{intro1} iff $V$ is prime.
\section{Preliminaries}\label{s:pre}

Throughout the paper  $\bc$  (resp. $\bz$, $\bz_+$) denotes the set of complex  numbers (resp. integers, non--negative integers) and $\bc^\times$ (resp. $\bz^\times$ ) is the set of non--zero complex numbers   (resp. integers).

\subsection{}

Let $I=\{1,\cdots,n\}$ be the index set for the set of simple roots $\{\alpha_i: i\in I\}$ of  an irreducible reduced  root system $R$ in a real vector space. Let $R^+$ be the corresponding set of positive roots. Fix a set of fundamental weights $\{\omega_i:i\in I\}$ and  let $Q$,  $P$ be the associated root and weight lattice respectively and recall that $Q\subset P$.  If $R^+$ is the set of positive roots then we let $Q^+$ be the $\bz_+$--span of $R^+$ and $P^+$ the $\bz_+$--span of the fundamental weights.

We assume that the nodes of the Dynkin diagram are numbered as in \cite{Bourbaki} and we follow the conventions of that labeling.
Let $\hat I=I\cup\{0\}$ be the nodes of the corresponding (untwisted) extended Dynkin diagram and denote by  $A=(a_{i,j})_{i,j\in I}$ (resp. $\hat A=(a_{i,j})_{i,j\in \hat I}$)  the associated Cartan (resp. untwisted affine Cartan) matrix.  Finally, fix non--negative integers $\{d_i: i\in\hat I\}$ such that the matrix $(d_ia_{i,j})_{i,j\in\hat I}$ is symmetric.

From now on, we fix $q\in \bc^\times$ and assume that $q$ is not a root of unity. For   $m\in\bz$, $\ell,r\in\bz_+$ and $i\in\hat I$, set $q_i=q^{d_i}$ and define,
\begin{gather*}[m]_i =  \frac{q_i^m-q_i^{-m}}{q_i-q_i^{-1}}\qquad
[0]_i!=1, \qquad [\ell]_i!= [\ell]_i[\ell-1]_i\dots[1]_i,\\ \\
\qbinom{\ell}{r}_i= \frac{[\ell]_i!}{[\ell-r]![r]_i!}.\end{gather*}

\subsection{} Let  $\hat{\bu}_q$ (resp. $\bu_q$) be the associative algebra over $\bc$
with generators $x_i^{\pm}, k_i^{\pm 1}, i\in\hat I$, (resp. $x_i^{\pm}, k_i^{\pm 1}, i\in I$) satisfying the following defining relations: for  $i,j\in \hat I$ (resp. $i,j\in I$), we have
\begin{gather*}
k_ik_i^{-1} = 1, \quad k_ik_j=k_jk_i, \\
 k_ix_j^{\pm}k_i^{-1} = q_i^{\pm a_{i,j}}x_j^{\pm},\\  [x_i^+, x_j^-] = \delta_{ij}\frac{k_i -k_i^{-1}}{q_i - q_i^{-1}}\\\sum_{m=0}^{1-a_{ij}}(-1)^m\tqbinom{1- a_{ij}}{m}_{i}(x^{\pm}_i)^{1-a_{i,j}-m}x_j^{\pm}(x_i^{\pm})^m = 0, \ \ i\ne j.
\end{gather*}
It is well--known that $\hat\bu_q$ and $\bu_q$ are Hopf algebras with counit,  comultiplication,  and antipode given as follows: for $i,j\in\hat I$ (resp. $i,j\in I$),
\begin{gather*}
\epsilon(k_i)  = 1, \qquad \epsilon (x_i^{\pm}) = 0,\\
\Delta(k_i) = k_i\otimes k_i,\\
  \Delta(x_i^+) = x_i^+ \otimes 1 + k_i \otimes x_i^+,\qquad \Delta(x_i^-) = x_i^-\otimes k_i^{-1} + 1 \otimes x_i^-,\\
S(k_i) = k_i^{-1},  \qquad S(x_i^+) = - k_i^{-1}x_i^+, \quad S(x_i^-) = -x_i^-k_i.
\end{gather*}
It is known  that $\bu_q$ can be canonically identified with the subalgebra of $\hat\bu_q$ generated by the elements $x_i^\pm$, $k_i$, $i\in I$ and that  $\bu_q$ is a Hopf subalgebra of $\hat\bu_q$. The algebra $\hat\bu_q$ is naturally $\bz$--graded by requiring $$\gr x^\pm_i= 0, \quad i\in I, \quad\ \ \gr x_0^\pm=\pm 1.$$

\subsection{}The quantum loop algebra is the quotient of  $\hat{\bu}_q$ by the two sided ideal generated by $$k_0\prod_{i\in I}k_i^{r_i}- 1,$$ where $r_i\in\bz_+$ is the coefficient of $\alpha_i$ in the highest root of $R^+$. It is clearly both a  $\bz$--graded and  a  Hopf ideal and hence the quantum loop algebra also acquires  a $\bz$--grading and the structure of a Hopf algebra.

{\em From now on, we shall only be concerned with the quantum loop algebra and hence we shall by abuse of notation write $\hat\bu_q$ for the quantum loop algebra}.

The  algebra $\hat\bu_q$ has an alternative presentation given in \cite{Dr}, \cite{Beck}. It is the algebra with generators $x^{\pm}_{i,r}$, $h_{i,s}$, $k_i^{\pm1}$, where $ i\in I$, $r\in\bz$, $s\in\bz^{\times}$, and defining relations: for $i,j\in I$, $r,\ell\in\bz$, $s\in\bz^\times$,
\begin{gather*}
k_ik_i^{-1}=1, \quad k_ik_j=k_jk_i, \quad k_ih_{j,s}=h_{j,s}k_i, \quad h_{i,r}h_{j,s}=h_{j,s}h_{i,s} \quad ,\\
[h_{i,s}, x^{\pm}_{j,r}]=\pm \frac{1}{s}\ [sa_{i,j}]_{q_i} x^{\pm}_{j,r+s} , \qquad k_ix^{\pm}_{j,r}k^{-1}_i=q_i^{\pm a_{i,j}}x^{\pm}_{j,r}, \\
x^{\pm}_{i,r}x^{\pm}_{j,\ell}-q_i^{\pm a_{i,j}}x^{\pm}_{j,\ell}x^{\pm}_{i,r}= q_i^{\pm a_{i,j}}x^{\pm}_{i,r-1}x^{\pm}_{j,\ell+1}-x^{\pm}_{j,\ell+1}x^{\pm}_{i,r-1},\\
 [x^+_{i,r}, x^-_{j,s}]=\delta_{ij}\ \frac{\phi^+_{i,r+s}-\phi^-_{i,r+s}}{q_i-q_i^{-1}},
 \end{gather*} where $\phi_{i,\mp m}^\pm =0,\ \ m>0,$ and the elements $\phi^{\pm}_{i,\pm m}$, $m\ge 0$, are defined by the following equality of power series in $u$:
$$\sum_{m\geq 0}\phi^{\pm}_{i,\pm m}u^{m}=k^{\pm1}_i\exp\left(\pm (q_i-q_i^{-1}) \sum_{s>0}h_{i,\pm s}u^s \right).$$ Finally, for $i\ne j$ and given  $r_m\in\bz$ for $1\le m\le 1-a_{i,j}$, we have
\begin{gather*}
\sum_{\sigma \in S_{1-a_{i,j}}}\sum_{m=0}^{1-a_{i,j}}(-1)^{m}\tqbinom{1-a_{i,j}}{m}_{q_i}x^{\pm}_{i,r_{\sigma(1)}}\ldots x^{\pm}_{i,r_{\sigma(m)}}x^{\pm}_{j,s}
x^{\pm}_{i,r_{\sigma(m+1)}}\ldots x^{\pm}_{i,r_{\sigma(1-a_{i,j})}}=0,
 \end{gather*}
 where for any $k\in\bz_+$ we denote by $S_k$ the symmetric group on $k$ letters.
\vskip 12pt
The $\bz$--grading on $\hat\bu_q$ is the same as the one given by setting:$$\gr x_{i,r}^\pm =r,\ \ \gr h_{i,s}=s,\ \ \gr\phi^\pm_{i,\pm m}= \pm m,$$ where $i\in I$, $r,s\in\bz$, $s\ne 0$ and $m\in\bz_+$.

\subsection{} Let $\hat{\bu}_q^\pm$ be the subalgebra of $\hat{\bu}_q$ generated by the elements $\{x_{i,r}^\pm: i\in I, r\in\bz\}$, $\hat{\bu}_q(0)$ the subalgebra generated by $\{k_i^{\pm 1},h_{i,s}: i\in I, s\in\bz^\times\}$, and $\hat{\bu}_q^0$ the subalgebra of $\hat{\bu}_q(0)$  generated by $\{ h_{i,s}: i\in I, s\in\bz^\times\}$ .  Then we have an isomorphism of vector spaces,\begin{equation} \label{pbw} \hat{\bu}_q=\hat{\bu}_q^-\hat{\bu}_q(0)\hat{\bu}_q^+.\end{equation}
The algebra $\hat{\bu}_q(0)$ (resp. $\hat{\bu}_q^0$) is also generated by the elements $\phi_{i,m}^\pm$, $i\in I$, $m\in\bz$ (resp. $m\in\bz^\times$). We shall also need a third set of generators $\Lambda_{i,r}$, $ i\in I$, $r\in\bz$  for   $\hat{\bu}_q^0$. These were defined in \cite{CPqa} and are given by the following equality of power series,\begin{equation}\label{deflambda}
\sum_{r=0}^\infty \Lambda_{i,\pm r} u^{r}= \exp\left(-\sum_{s=1}^\infty\frac{h_{i,\pm s}}{[s]_{i}}u^{s}\right).
\end{equation}
In particular, $\Lambda_{i,0}=1$ for all $i\in I$ and $\gr\Lambda_{i,r}=r$. We conclude the section with the following result established in \cite{Beck}, \cite{BCP}.
\begin{lem}\label{poly} The algebra $\hat{\bu}_q^0$ is the polynomial algebra in the variables $\{h_{i,r}: i\in I, r\in\bz^\times\}$. Analogous statements hold for the generators $\{\phi^\pm_{i,m}: i\in I, m\in\bz^\times\}$ and $\{\Lambda_{i,r}: i\in I, r\in\bz^\times\}.$\hfill\qedsymbol
\end{lem}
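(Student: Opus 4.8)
The plan is to reduce everything to the single assertion that the elements $\{h_{i,r} : i\in I,\ r\in\bz^\times\}$ are algebraically independent and generate $\hat\bu_q^0$; the statements for $\phi^\pm_{i,m}$ and $\Lambda_{i,r}$ then follow formally. First I would recall from \cite{Beck}, \cite{BCP} that Beck's PBW-type basis of $\hat\bu_q$, obtained by transporting Lusztig's root vectors through Beck's isomorphism with the Drinfeld loop presentation, gives a vector-space decomposition in which the Cartan part $\hat\bu_q^0$ is spanned by ordered monomials in the $h_{i,r}$. Concretely, fixing any total order on the index set $\{(i,r) : i\in I,\ r\in\bz^\times\}$, the ordered monomials $\prod h_{i,r}^{m_{i,r}}$ (finitely many factors) form a basis of $\hat\bu_q^0$. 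This is exactly the statement that $\hat\bu_q^0$ is the polynomial algebra $\bc[h_{i,r} : i\in I,\ r\in\bz^\times]$, since a polynomial algebra is precisely a commutative algebra with a basis of ordered monomials in a distinguished generating set; commutativity of the $h_{i,r}$ among themselves is one of the Drinfeld relations listed above.

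Next I would pass to the $\phi^\pm_{i,m}$. The defining relation $\sum_{m\ge 0}\phi^\pm_{i,\pm m}u^m = k_i^{\pm 1}\exp\!\bigl(\pm(q_i-q_i^{-1})\sum_{s>0}h_{i,\pm s}u^s\bigr)$ shows, after stripping off the invertible scalar-free part and the $k_i^{\pm1}$ (which do not lie in $\hat\bu_q^0$), that within $\hat\bu_q^0$ each $\phi^\pm_{i,\pm m}$ for $m\ge 1$ is a polynomial in $h_{i,\pm 1},\dots,h_{i,\pm m}$ with leading term $\pm(q_i-q_i^{-1})h_{i,\pm m}$, and conversely each $h_{i,\pm m}$ is a polynomial in $\phi^\pm_{i,\pm 1},\dots,\phi^\pm_{i,\pm m}$. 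Hence the change of generators is triangular and invertible over $\bc$, so $\{\phi^\pm_{i,m} : i\in I,\ m\in\bz^\times\}$ is again an algebraically independent generating set of $\hat\bu_q^0$. The identical argument applies to the $\Lambda_{i,r}$: by~\er{deflambda} we have $\sum_{r\ge 0}\Lambda_{i,\pm r}u^r = \exp\!\bigl(-\sum_{s\ge 1}\tfrac{h_{i,\pm s}}{[s]_i}u^s\bigr)$, so $\Lambda_{i,\pm m}$ is a polynomial in $h_{i,\pm 1},\dots,h_{i,\pm m}$ with leading term $-\tfrac{1}{[m]_i}h_{i,\pm m}$, and the triangular system inverts (the scalars $[m]_i$ are nonzero since $q$ is not a root of unity). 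Therefore $\{\Lambda_{i,r} : i\in I,\ r\in\bz^\times\}$ is also a polynomial generating set.

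The only genuine input is the PBW statement from \cite{Beck}, \cite{BCP} that identifies $\hat\bu_q^0$ with a polynomial ring in the $h_{i,r}$; I expect this to be the sole nontrivial ingredient, and everything after it is bookkeeping with the two triangular changes of variables. The one point requiring a little care is to make sure that, when solving for $h_{i,\pm m}$ in terms of the $\phi$'s or $\Lambda$'s, one stays inside $\hat\bu_q^0$ rather than the larger algebra $\hat\bu_q(0)$ that also contains the $k_i^{\pm1}$; this is handled by noting that the $k_i$ appear only as an overall invertible prefactor in the $\phi$ generating series and not at all in the $\Lambda$ series, so the comparison of coefficients at each degree $m\ge 1$ takes place entirely in $\hat\bu_q^0$.
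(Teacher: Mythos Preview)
Your approach is essentially the same as the paper's: the paper gives no proof of its own and simply cites \cite{Beck}, \cite{BCP} for the PBW result on $\hat\bu_q^0$, which is exactly the single nontrivial ingredient you identify, with the $\phi$ and $\Lambda$ statements following by the triangular change of variables you describe. One small caution on the $\phi$ case: since $\phi^\pm_{i,\pm m}=k_i^{\pm1}\cdot(\text{polynomial in the }h\text{'s})$, the $\phi^\pm_{i,m}$ themselves lie in $\hat\bu_q(0)$ rather than $\hat\bu_q^0$, so the ``analogous statement'' should be read as asserting algebraic independence (equivalently, that $k_i^{\mp1}\phi^\pm_{i,\pm m}$ are polynomial generators of $\hat\bu_q^0$); your triangular argument establishes exactly this once you strip the $k_i^{\pm1}$ prefactor, and no further adjustment is needed for the $\Lambda$'s.
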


\section{The Main Results}\label{s:main}

\subsection{} Let $\lie g$ be a simple Lie algebra with root system $R^+$. Let $\lie h$ be the Cartan subalgebra and $\bu(\lie g)$ the universal enveloping algebra of $\lie g$. In this case we can regard $Q$ and $P$ as lattices in the vector space dual of $\lie h$. Any finite--dimensional representation of $\lie g$ is semi-simple. i.e., is isomorphic to a direct sum of irreducible representations.  Further,  the irreducible finite--dimensional representations are indexed by $P^+$. For $\mu\in P^+$, let $\overline{V}(\mu)$ be an irreducible module associated to $\mu$. Then
$$\overline{V}(\mu)=\bigoplus_{\nu\in P}\overline{V}(\mu)_\nu,\ \ \overline{V}(\mu)_\nu=\{v\in\overline{V}(\mu): hv=\nu(h)v,\ \ h\in\lie h\}.$$

\subsection{} A representation $V$ of $\hat{\bu}_q$ is said to be of type 1 if \begin{equation}
V=\bigoplus_{\mu\in P}V_\mu,\ \ \qquad V_\mu = \{v\in V: k_iv=q_i^{\mu_i}v \text{ for all } i\in \hat I\}.
\end{equation} where we write $\mu=\sum_{i\in I}\mu_i\omega_i$. Set $\wt(V) = \{\mu\in P:V_\mu\ne 0\}$.
Let $\hat{\cal F}$ be the category of type 1 finite--dimensional representations of $\hat\bu_q$. Specifically, the objects of $\hat{\cal F}$  are type 1 representations of $\hat\bu_q$ and the  morphisms in the category are just $\hat\bu_q$--module maps. The category $\cal F$ of finite--dimensional representations of $\bu_q$ is defined similarly. Any $\hat\bu_q$--module can be regarded by restriction as a module for $\bu_q$ and we shall use this fact repeatedly without mention.  Since $\hat\bu_q$ and $\bu_q$ are Hopf algebras the categories $\hat{\cal F}$ and $\cal F$ contain the trivial one--dimensional representation and are closed under taking tensor products and  duals.
\begin{defn} We shall say that $V\in\Ob\hat{\cal F}$ is prime if either $V$ is trivial or if there does not exist nontrivial  $V_j\in\Ob\hat{\cal F}$, $j=1,2$, with  $V\cong V_1\otimes V_2$.
\hfill\qedsymbol
\end{defn}
Clearly any $V\in\Ob\hat{\cal F}$ can be written as a tensor product of prime representations. Following \cite{herlec} we shall say that a simple object $V$ in $\hat{\cal F}$ is a real prime if  $V$ is prime and $V^{\otimes 2}$ is irreducible. It is known through the work of \cite{her:simpletp} that for a real prime $V$ the object  $V^{\otimes r}$ is irreducible for all $r\ge 1$.
One can define prime objects in $\cal F$ in a similar way but this is not interesting as we shall now see.

\subsection{}   The following was proved in \cite{Lusztig}.
\begin{prop} The category $\cal F$ is semisimple. Given any $\mu\in P^+$, the $\bu_q$--module $V(\mu)$  generated by an element $v_\mu$  with relations:
$$x_i^+ v_\mu=0,\ \ k_iv_\mu=q_i^{\mu_i}v_\mu,\ \ \ (x_i^-)^{\mu_{i}+1}v_\mu=0,$$ is  a simple object of $\cal F$. Further, any simple object in $\cal F$ is isomorphic to $V(\mu)$ for some $\mu\in P^+$ and $$\dim V(\mu)_\nu=\dim\overline{V}(\mu)_\nu,\ \ \nu\in P.$$ Moreover, given $\lambda,\mu,\nu\in P^+$, we have $$\dim\Hom_{\lie g}(\overline{V}(\mu)\otimes \overline{V}(\lambda),\  \overline{V}(\nu))=\dim\Hom_{\cal F}({V(\mu)}\otimes {V(\lambda)}, {V(\nu)}).$$

\hfill\qedsymbol
\end{prop}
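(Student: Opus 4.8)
The plan is to reconstruct this classical description of $\cal F$ in stages, the recurring tool being that for each $i\in I$ the elements $x_i^\pm,k_i^{\pm1}$ generate a copy of $\bu_q(\lie{sl}_2)$, together with the elementary classification of its finite-dimensional type~$1$ modules (which follows from the $\lie{sl}_2$-Casimir and uses that $q$ is not a root of unity). First, \emph{highest-weight theory}: any $V\in\Ob\cal F$ is the sum of its weight spaces $V_\mu$, $\mu\in P$, with $\wt(V)$ finite, so there is a nonzero $v\in V_\mu$ killed by all $x_i^+$; finite-dimensionality of the $\bu_q(\lie{sl}_2)$-submodule generated by $v$ forces $\mu_i\in\bz_+$ and $(x_i^-)^{\mu_i+1}v=0$, hence $\mu\in P^+$ and, if $V$ is simple, $V$ is a quotient of the module $V(\mu)$ with the displayed presentation. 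Conversely those relations make $V(\mu)$ locally finite (``integrable'') under each copy of $\bu_q(\lie{sl}_2)$, so $\wt(V(\mu))\subseteq\mu-Q^+$ is $W$-stable, hence finite, so $V(\mu)$ is finite-dimensional with $V(\mu)_\mu=\bc v_\mu$ and all other weights $<\mu$.

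Next, \emph{semisimplicity}. Let $\Omega$ be the quantum Casimir, a central element of a completion of $\bu_q$ that acts on any object of $\cal F$ and acts on a highest weight module of highest weight $\lambda$ by a scalar $c_\lambda$ with $c_\lambda\ne c_\mu$ whenever $\lambda,\mu\in P^+$ and $\mu-\lambda\in Q^+\setminus\{0\}$ (since $q$ is not a root of unity this reduces to $(\mu-\lambda,\mu+\lambda+2\rho)\ne0$, which holds because $\mu+\lambda+2\rho$ is strictly dominant). Given $0\to V(\lambda)\to E\to V(\mu)\to0$ in $\cal F$, where we may assume $\mu-\lambda\in Q$ (the other case being trivial): if $c_\lambda\ne c_\mu$ the decomposition of $E$ into generalized $\Omega$-eigenspaces supplies the submodule complement $\bigoplus_{c\ne c_\lambda}\ker(\Omega-c)^{\mathrm{gen}}$; if $c_\lambda=c_\mu$ then $\lambda$ and $\mu$ are incomparable or equal, so $V(\lambda)_{\mu+\alpha_i}=0$ for all $i$, a lift $v\in E_\mu$ of the highest weight vector of the quotient satisfies $x_i^+v=0$, and finite-dimensionality of $\bu_q(\lie{sl}_2)\cdot v$ gives $(x_i^-)^{\mu_i+1}v=0$; thus $\bu_q v$ is a quotient of $V(\mu)$ that surjects onto the quotient $V(\mu)$ of $E$, hence has the same dimension and is a complement. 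Induction on composition length shows every object of $\cal F$ is a direct sum of copies of the $V(\mu)$; in particular each $V(\mu)$, being cyclic on $v_\mu$, is simple, and the $V(\mu)$, $\mu\in P^+$, are pairwise non-isomorphic and exhaust the simple objects of $\cal F$.

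For the character formula I would deform: regard $V(\mu)$ as the specialization at the chosen $q$ of the analogous module over $\bc(t)$, pass to Lusztig's divided-power integral form over a localization of $\bz[t,t^{-1}]$ at which $t\mapsto1$ is allowed, and compare the two specializations. The PBW theorem for $\bu_q^-$ bounds $\ch V(\mu)$ above by $\ch\overline{V}(\mu)$; the $t\to1$ specialization of the integral form is a finite-dimensional highest weight $\bu(\lie g)$-module of highest weight $\mu$ annihilated by the $(x_i^-)^{\mu_i+1}$, hence a quotient of $\overline{V}(\mu)$ with the same weight multiplicities as $V(\mu)$, and by Weyl's theorem ($\overline{V}(\mu)$ being the simple module with exactly those defining relations) this forces $\ch V(\mu)=\ch\overline{V}(\mu)$. (Alternatively one invokes the quantum Weyl character formula.) Finally, both $\cal F$ and the category of finite-dimensional $\lie g$-modules are now semisimple with simples indexed by $P^+$ and with matching characters, and characters are multiplicative for $\otimes$; hence $V(\nu)$ occurs in $V(\mu)\otimes V(\lambda)$ with the same multiplicity as $\overline{V}(\nu)$ in $\overline{V}(\mu)\otimes\overline{V}(\lambda)$, and by Schur's lemma each of these common multiplicities is the dimension of the corresponding $\Hom$ space.

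I expect the main obstacle to be the character identification: one must verify that Lusztig's integral form is free of the correct rank on every weight space and is compatible with both the $t\to q$ and $t\to1$ specializations, so that no dimension is lost in passing between the chosen $q$ and $q=1$ — this is precisely the point where the hypothesis that $q$ is not a root of unity is essential. The construction of the quantum Casimir in a completion, and the $\lie{sl}_2$ base case invoked in the semisimplicity step, are standard but should be stated with care, the more so because the self-extension case $\lambda=\mu$ of the splitting argument is invisible to the Casimir and must be settled by the highest-weight lifting.
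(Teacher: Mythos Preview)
The paper gives no proof of this proposition: it is stated with a citation to \cite{Lusztig} and a qed symbol. Your outline is a faithful reconstruction of the standard argument (highest-weight theory, quantum Casimir for semisimplicity, Lusztig's deformation for the characters), and you correctly identify the freeness of the integral form as the real content. One quibble: the sentence ``The PBW theorem for $\bu_q^-$ bounds $\ch V(\mu)$ above by $\ch\overline{V}(\mu)$'' is not right as stated---PBW only bounds $\ch V(\mu)$ by the Verma character---but you do not actually need that inequality, since the rest of the same paragraph already closes the argument once freeness is known: the $t\to 1$ specialization is then a nonzero quotient of the simple module $\overline{V}(\mu)$ with the same character as $V(\mu)$, hence equals $\overline{V}(\mu)$.
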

The following is now a consequence of the corresponding result for simple Lie algebras.
\begin{cor} \label{class} The representations $V(\mu)$, $\mu\in P^+$ are prime.
\hfill\qedsymbol
\end{cor}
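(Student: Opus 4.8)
The plan is to deduce primeness of $V(\mu)$ in $\cal F$ from the classical fact that no nontrivial tensor product of finite-dimensional $\lie g$-modules is irreducible, transporting that fact across the dictionary supplied by the Proposition above. If $\mu=0$ then $V(0)$ is trivial, hence prime by definition, so assume $\mu\neq 0$ and suppose for contradiction that $V(\mu)\cong V_1\otimes V_2$ with $V_1,V_2\in\Ob\cal F$ nontrivial.

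First I would reduce to the case in which $V_1$ and $V_2$ are simple. Since $\cal F$ is semisimple, each $V_j$ is a direct sum of simple modules; if $V_1\cong A\oplus B$ with $A,B$ nonzero, then $V_1\otimes V_2\cong(A\otimes V_2)\oplus(B\otimes V_2)$ is a direct sum of two nonzero submodules (both $V_2$ and $A,B$ being nonzero), contradicting the simplicity of $V(\mu)$. Hence $V_1\cong V(\lambda)$ and $V_2\cong V(\nu)$ for some $\lambda,\nu\in P^+$, and nontriviality forces $\lambda\neq 0$ and $\nu\neq 0$. It therefore suffices to show that $V(\lambda)\otimes V(\nu)$ is reducible whenever $\lambda,\nu\in P^+$ are nonzero.

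Next I would invoke semisimplicity of $\cal F$ together with the Hom-space identity in the Proposition. Writing $V(\lambda)\otimes V(\nu)\cong\bigoplus_{\kappa\in P^+}V(\kappa)^{\oplus m_\kappa}$, one has $m_\kappa=\dim\Hom_{\cal F}(V(\lambda)\otimes V(\nu),V(\kappa))=\dim\Hom_{\lie g}(\overline{V}(\lambda)\otimes\overline{V}(\nu),\overline{V}(\kappa))$, and the numbers on the right are precisely the multiplicities occurring in the decomposition of $\overline{V}(\lambda)\otimes\overline{V}(\nu)$ into $\lie g$-irreducibles. Consequently $V(\lambda)\otimes V(\nu)$ is irreducible in $\cal F$ if and only if $\overline{V}(\lambda)\otimes\overline{V}(\nu)$ is irreducible as a $\lie g$-module. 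Since $\lambda,\nu\neq 0$, the latter fails: the Cartan component $\overline{V}(\lambda+\nu)$ occurs with multiplicity one but is a proper submodule. For instance, comparing the corresponding factors of the Weyl dimension formula over $\alpha\in R^+$ gives $\langle\lambda+\rho,\alpha\rangle\langle\nu+\rho,\alpha\rangle\geq\langle\lambda+\nu+\rho,\alpha\rangle\langle\rho,\alpha\rangle$ (the difference being $\langle\lambda,\alpha\rangle\langle\nu,\alpha\rangle\geq 0$), with strict inequality for the highest root $\alpha=\theta$ since $\langle\lambda,\theta\rangle>0$ and $\langle\nu,\theta\rangle>0$; hence $\dim\overline{V}(\lambda)\,\dim\overline{V}(\nu)>\dim\overline{V}(\lambda+\nu)$. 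This contradiction completes the proof.

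The argument is essentially bookkeeping; the only point needing care is the use of the Proposition in the third step, namely the observation that irreducibility of the tensor product in $\cal F$ is detected faithfully by the classical branching multiplicities, so that the whole question collapses to the well-known simple-Lie-algebra statement. I do not expect any substantial obstacle beyond this.
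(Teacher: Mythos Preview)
Your argument is correct and follows exactly the route the paper intends: the paper gives no proof beyond the sentence ``The following is now a consequence of the corresponding result for simple Lie algebras,'' and you have simply spelled out how the Proposition's equality of Hom-multiplicities transports the classical fact across to $\cal F$, together with a clean Weyl-dimension-formula verification of that classical fact. There is nothing to add.
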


\subsection{}  Given any type 1 module $V$  for $\hat{\bu}_q$, set $$V^+=\{v\in V: x_{i,r}^+v=0\ \ i\in I, r\in\bz\},\qquad \ V_\lambda^+=V^+\cap V_\lambda.$$ If $v\in V^+_\lambda$, then it follows from \eqref{pbw} that $\wt\hat{\bu}_qv\subset\lambda- Q^+.$ An element $v\in V$ is said to be an $\ell$--weight vector if it is a joint eigenvector for the $h_{i,r}$, $i\in I$, $r\in\bz^\times$.
An $\ell$--weight vector contained in $V_\lambda^+$ is called a highest-$\ell$-weight vector.  Notice that this is equivalent to requiring it to be a joint eigenvector for the $\phi^\pm_{i,m}$, $i\in I$, $m\in\bz$. A $\hat{\bu}_q$--module $V$ is said to be highest-$\ell$-weight  if it is generated by a highest-$\ell$-weight element. The following is well--known and  easily proved.
\begin{lem}\label{lweight} Let $V$ be a  highest-$\ell$-weight $\hat{\bu}_q$--module. Then $V$ has a unique irreducible quotient. Any simple object in $\hat{\cal F}$ is highest-$\ell$-weight and in fact the space  of highest-$\ell$-weight vectors is one--dimensional .\hfill\qedsymbol\end{lem}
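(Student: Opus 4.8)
The plan is to prove the three assertions in sequence, using the triangular decomposition \eqref{pbw} as the workhorse throughout.

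\textbf{Step 1 (unique irreducible quotient).} Let $v\in V^+_\lambda$ be a highest-$\ell$-weight generator of $V$. First I would observe that $\hat\bu_q^+v=\bc v$ (since $v$ is annihilated by every $x^+_{i,r}$) and $\hat\bu_q(0)v=\bc v$ (since $v$ is a joint eigenvector for the $k_i^{\pm1}$ and the $h_{i,s}$, equivalently for the $\phi^\pm_{i,m}$, as noted above). Hence \eqref{pbw} gives $V=\hat\bu_qv=\hat\bu_q^-\hat\bu_q(0)\hat\bu_q^+v=\hat\bu_q^-v$. Grading $\hat\bu_q^-$ by $-Q^+$, with $x^-_{i,r}$ of weight $-\alpha_i$ and degree-zero component $\bc$, this yields $\wt(V)\subseteq\lambda-Q^+$ and $V_\lambda=\bc v$. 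The usual argument then finishes Step 1: any proper submodule $W$ has $W_\lambda=0$ (otherwise $v\in W$ and $W=V$), so the sum of all proper submodules again has trivial $\lambda$-weight space, hence is proper; it is therefore the unique maximal submodule, and its quotient is the unique irreducible quotient of $V$.

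\textbf{Step 2 (simple objects are highest-$\ell$-weight).} Let $V$ be a simple object of $\hat{\cal F}$. Finite-dimensionality gives a weight $\lambda$ maximal in $\wt(V)$, and then $x^+_{i,r}V_\lambda\subseteq V_{\lambda+\alpha_i}=0$, so $V_\lambda\subseteq V^+$. The commuting family $\{h_{i,r}:i\in I,\ r\in\bz^\times\}$ preserves the nonzero finite-dimensional space $V_\lambda$, and since $\bc$ is algebraically closed it admits a common eigenvector $v\in V_\lambda$, which is thus a highest-$\ell$-weight vector. Simplicity of $V$ forces $V=\hat\bu_qv$, so $V$ is highest-$\ell$-weight.

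\textbf{Step 3 (one-dimensionality).} Finally, if $v'\in V^+_\mu$ is any highest-$\ell$-weight vector, then $\hat\bu_qv'$ is a nonzero submodule of $V$, hence all of $V$ by simplicity; applying Step 1 to $v'$ gives $\wt(V)\subseteq\mu-Q^+$ and $V_\mu=\bc v'$. Comparing with $\wt(V)\subseteq\lambda-Q^+$ yields $\lambda-\mu\in Q^+$ and $\mu-\lambda\in Q^+$, so $\lambda=\mu$; thus every highest-$\ell$-weight vector lies in $V_\lambda=\bc v$, and the space of highest-$\ell$-weight vectors is one-dimensional. The argument is entirely routine; the only mild points requiring care are the passage between joint eigenvectors for the $h_{i,r}$ and for the $\phi^\pm_{i,m}$ (immediate from their defining generating-function relation) and the use of algebraic closure of $\bc$ in Step 2, and I do not expect any genuine obstacle.
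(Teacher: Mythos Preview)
Your proof is correct and is precisely the standard argument the paper has in mind: note that the lemma is stated with a \qedsymbol\ and the preamble ``The following is well--known and easily proved,'' so the paper gives no proof at all. Your write-up supplies exactly the routine verification that the authors chose to omit, and there is nothing to compare.
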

If $V\in\Ob\hat{\cal F}$ is a highest-$\ell$-weight module generated by a highest-$\ell$-weight element $v$  then there are constraints imposed on the eigenvalues of $h_{i,r}$ on $v$.   We now explain these constraints
\subsection{} Let $u$ be an indeterminate, $\bc[u]$ the algebra of polynomials in $u$ with coefficients in $\bc$
 and  $\bc(u)$  the field of quotients. Let $\cal P^+$ be the the multiplicative monoid consisting of all $I$-tuples of the form $\bpi = (\pi_i)_{i\in I}$ where  $\pi_i$ is a polynomial in $\bc[u]$ with constant term $1$. The $I$--tuple consisting of the constant polynomial $1$ is called the trivial element of $\cal P^+$.

The following was proved in \cite{CPqa}, \cite{CPbanff}.
\begin{prop} Suppose that $V\in\Ob\hat{\cal F}$ is highest-$\ell$-weight with generator $v\in V_\lambda^+$ and assume that $\phi^\pm_{i,m} v=d_{i,m}v$, $i\in I$, $m\in\bz$. There exists an element $\bpi\in\cal P^+$ such that, \begin{equation}\label{lweight2}\sum_{m\geq 0}d^{+}_{i, m}u^{m} =q^{\pm\deg\pi_i}\frac{\pi_i(q_i^{-1}u)}{\pi_i(q_iu)}=\sum_{m\geq 0}d^{-}_{i, -m}u^{-m},\qquad \lambda_i=\deg\pi_i, \end{equation} in the sense that the left- and right-hand terms are the Laurent expansions of the
middle term about $0$ and $\infty$, respectively. Equivalently, one has an equality of power series,
\begin{equation}\label{hs}
\pi_i^\pm(u)= \exp\left(-\sum_{s=1}^\infty\frac{h_{i,\pm s}(\bpi)}{[s]_{q_i}}u^{s}\right) =\sum_{r\ge 0}\Lambda_{i, \pm r}(\bpi) u^r,
\end{equation}
where $\pi_i^+(u)=\pi_i(u)$ and $\pi_i^-(u)= u^{\deg\pi_i}\pi_i(u^{-1})/\left(u^{\deg\pi_i}\pi_i(u^{-1})\right)(0).$

Conversely, given $\bpi\in\cal P^+$ there exists a unique (up to isomorphism) irreducible highest-$\ell$-weight object $V(\bpi)\in\Ob\hat{\cal F}$ which is generated by a highest-$\ell$-weight vector  $v(\bpi)$ of $\ell$-weight given by \eqref{lweight2}.

\hfill\qedsymbol
\end{prop}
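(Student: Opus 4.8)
The plan is to reduce both halves of the statement to the rank one case and to settle $\lie{sl}_2$ by a direct manipulation of the Drinfeld relations; the converse then follows by a standard tensor product argument once one knows that fundamental modules are finite--dimensional.

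\emph{The constraints \eqref{lweight2}.} Fix $i\in I$ and let $\hat\bu_q^{(i)}\subseteq\hat\bu_q$ be the subalgebra generated by $x^\pm_{i,r}$ $(r\in\bz)$, $h_{i,s}$ $(s\in\bz^\times)$ and $k_i^{\pm1}$; by inspection of the Drinfeld relations this is a homomorphic image of the quantum loop algebra of $\lie{sl}_2$ with parameter $q_i$. The vector $v$ is a highest-$\ell$-weight vector for $\hat\bu_q^{(i)}$, and $\hat\bu_q^{(i)}v$ is a finite--dimensional highest-$\ell$-weight $\hat\bu_q^{(i)}$--module. Restricting once more to $\langle x^\pm_{i,0},k_i^{\pm1}\rangle\cong\bu_{q_i}(\lie{sl}_2)$ and using the classification of finite--dimensional $\bu_q$--modules, $v$ generates a finite--dimensional $\lie{sl}_2$--module, so $\lambda_i\in\bz_+$ and $(x^-_{i,0})^{\lambda_i+1}v=0$. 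It therefore suffices to show: if $W$ is a finite--dimensional highest-$\ell$-weight $\hat\bu_q(\lie{sl}_2)$--module with generator $w$ of weight $\lambda\in\bz_+$ satisfying $(x^-_0)^{\lambda+1}w=0$, and $\phi^+_m w=d_mw$, then $\sum_{m\ge0}d_mu^m=q^{\lambda}\pi(q^{-1}u)/\pi(qu)$ for a unique polynomial $\pi$ of degree $\lambda$ with $\pi(0)=1$. The heart of the matter is to exploit the single relation $(x^-_0)^{\lambda+1}w=0$: applying raising operators $x^+_{1,r}$ to it and commuting them to the right using $[x^+_{1,r},x^-_{1,s}]=(\phi^+_{r+s}-\phi^-_{r+s})/(q-q^{-1})$ together with the relations coupling $h_{1,\pm1}$ to $x^\pm_{1,\pm1}$, one obtains a finite recursion for the $d_m$ whose unique solution is the coefficient sequence of a rational function of the asserted shape; equivalently, in the notation of \eqref{deflambda}--\eqref{hs} it forces $\Lambda_{1,r}(\bpi)=0$ for all $r>\lambda$, so $\pi_1:=\sum_{r\ge0}\Lambda_{1,r}(\bpi)u^r$ is a polynomial of degree $\lambda$. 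Collecting the $\pi_i$ produces $\bpi\in\cal P^+$, and the equivalence of \eqref{lweight2} with \eqref{hs} is the elementary fact that the Laurent expansions of $q^{\pm\deg\pi_i}\pi_i(q_i^{-1}u)/\pi_i(q_iu)$ about $u=0$ and $u=\infty$ are exactly the two series appearing in \eqref{hs}, using the definition \eqref{deflambda} of the $\Lambda_{i,r}$.

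\emph{Existence and uniqueness of $V(\bpi)$.} Given $\bpi\in\cal P^+$ put $\lambda=\sum_i(\deg\pi_i)\omega_i$, let $h_{i,s}(\bpi)\in\bc$ be defined by \eqref{hs}, and let $M(\bpi)$ be the quotient of $\hat\bu_q$ by the left ideal generated by the $x^+_{i,r}$, the $k_i-q_i^{\deg\pi_i}$ and the $h_{i,s}-h_{i,s}(\bpi)$. By \eqref{pbw} this is a nonzero highest-$\ell$-weight module, so by \lemref{lweight} it has a unique irreducible quotient, which we call $V(\bpi)$; its highest-$\ell$-weight is \eqref{lweight2} by construction. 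Any irreducible highest-$\ell$-weight object with highest-$\ell$-weight \eqref{lweight2} is a quotient of $M(\bpi)$, hence isomorphic to $V(\bpi)$, which gives the uniqueness. It remains to check $V(\bpi)\in\Ob\hat{\cal F}$, i.e. that it is finite--dimensional. For this one proceeds in three steps: (i) for a single node $i$ and $a\in\bc^\times$ the fundamental-type module $V(\bvpi_{i,a})$ --- the $V(\bpi)$ for $\pi_j=\delta_{ij}(1-au)$ --- is finite--dimensional; this is the only non--formal input and is supplied by \cite{CPqa} (for $\lie{sl}_2$ it is the two--dimensional evaluation module, and the general case is deduced from it). (ii) Factoring $\pi_i=\prod_k(1-a_{i,k}u)$, the tensor product $\bigotimes_{i,k}V(\bvpi_{i,a_{i,k}})$ is finite--dimensional, and because the comultiplication is triangular with respect to the $\bz$--grading, the tensor product of the individual highest-$\ell$-weight vectors is again a highest-$\ell$-weight vector, with $\ell$-weight the product $\bpi$. (iii) The $\hat\bu_q$--submodule it generates is therefore finite--dimensional and has $V(\bpi)$ as its unique irreducible quotient, so $V(\bpi)$ is finite--dimensional.

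\emph{The main difficulty.} The crux is the $\lie{sl}_2$ computation in the first part: converting the single identity $(x^-_0)^{\lambda+1}w=0$ into the closed rational form of the generating series of the $\phi^\pm_m$--eigenvalues. This requires carefully tracking how the $\phi^\pm_m$ propagate past powers of $x^-_0$ and using the Drinfeld relations linking $h_{1,\pm1}$ to $x^\pm_{1,\pm1}$, and it is where essentially all the work lies; by contrast, in the converse direction everything reduces, modulo the existence of finite--dimensional fundamental modules, to bookkeeping with \eqref{pbw}, \eqref{hs} and \lemref{lweight}.
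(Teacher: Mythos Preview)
The paper does not give its own proof of this proposition: it is stated with a citation to \cite{CPqa}, \cite{CPbanff} and a $\qedsymbol$, so there is nothing to compare against beyond the original references. Your sketch is essentially the argument of those papers --- reduce to the $\lie{sl}_2$ subalgebra, extract the rationality of $\sum d_m u^m$ from the single relation $(x^-_0)^{\lambda+1}w=0$ via the Drinfeld commutation relations, and then build $V(\bpi)$ as the irreducible quotient of a Verma-type module, getting finite-dimensionality from a tensor product of fundamental modules.

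One small caveat: in your step (i) you write that the finite-dimensionality of the fundamental modules ``for $\lie{sl}_2$ \dots\ is the two--dimensional evaluation module, and the general case is deduced from it.'' That last clause overstates matters. The general finite-dimensionality of $V(\bvpi_{i,a})$ is \emph{not} a formal consequence of the $\lie{sl}_2$ case; in \cite{CPbanff} it requires a separate (and nontrivial) argument, and historically it was done type by type (evaluation maps in type $A$, more work in other types). So it would be more accurate to say that (i) is the genuinely nontrivial input supplied by \cite{CPbanff}, rather than something reduced to $\lie{sl}_2$. With that adjustment your outline is correct and matches the literature the paper is citing.
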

\noindent We remark that the trivial representation corresponds to taking the trivial $n$--tuple.
\vskip 12pt
{\em From now on, we shall use the convention that given $\bvpi\in\cal P^+$,  the eigenvalue of $\phi^\pm_{i,m}$ on an $\ell$--weight vector with $\ell$--weight $\bvpi$ is denoted   $\phi^\pm_{i,m}(\bvpi)$, and  $h_{i,r}(\bvpi)$  and $\Lambda_{i,r}(\bvpi)$ are  defined similarly.}

\subsection{} The category $\hat{\cal F}$ unlike $\cal F$  is not semisimple and we shall be interested in understanding extensions in the category. Our focus in this paper is the space of self extensions of the simple objects of $\hat{\cal F}$. The trivial object $\bc$ satisfies $\Ext^1_{\hat{\cal F}}(\bc,\bc)=0$.
Since any $V\in\Ob\hat{\cal F}$ has a Jordan--Holder series, it follows that if all the Jordan--Holder factors of  $V\in\Ob\hat{\cal F}$ are trivial, then $V$ is isomorphic to a direct sum of copies of the trivial representation.
 Our first main result is:
\begin{thm} Let $\bpi\in\cal P^+$ be non--trivial. We have,
\begin{enumerit}
\item[(i)] $\dim \Ext^1_{\hat{\cal F}}(V(\bpi),V(\bpi))\ge 1.$
\item[(ii)] Suppose that    $\dim \Ext^1_{\hat{\cal F}}(V(\bpi),V(\bpi))=1$. Then there exists $\bpi_0\in\cal P^+$  and $s\in\bz_+$ such that $V(\bpi_0)$ is prime and $\bpi=\bpi_0^s$. In addition, there exists a partition $s_1\ge\cdots\ge s_k>0$ of $s$ such that $V(\bpi_0^{s_r})$ is prime for all $1\le r\le k$ and
$$V(\bpi)\cong V(\bpi_0^{s_1})\otimes\cdots\otimes V(\bpi_0^{s_k}). $$
In particular if $V(\bpi_0)$ is a real prime, this is equivalent to saying that $V(\bpi)$ is a tensor power of $V(\bpi_0)$.
\item[(iii)] Let $\bpi_1,\bpi_2\in\cal P^+$ and assume that $V(\bpi_1)\otimes V(\bpi_2)$ is irreducible. We have $$\dim \Ext_{\hat{\cal F}}(V(\bpi_1)\otimes V(\bpi_2),\ \ V(\bpi_1)\otimes V(\bpi_2))\ge \dim \Ext_{\hat{\cal F}}(V(\bpi_1),\ V(\bpi_1)).$$
\item[(iv)] If $R$ is a root system of type $A_1$  then $\dim \Ext_{\hat{\cal F}}(V(\bpi),V(\bpi))=1$ iff $V(\bpi)$ is prime.\end{enumerit}
\end{thm}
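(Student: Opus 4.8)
The plan is to prove the two implications separately, using the structure theory of $\lie{sl}_2$ representations of $\hat\bu_q$. Recall that in type $A_1$ every $\bpi\in\cal P^+$ is a single polynomial with constant term $1$, and by the Chari--Pressley classification $V(\bpi)$ is prime if and only if $\bpi$ corresponds to a single Kirillov--Reshetikhin module, i.e. $\bpi$ has roots forming a single $q$-string $\{aq^{-\ell+1}, aq^{-\ell+3},\dots,aq^{\ell-1}\}$ (a "$q$-segment"). A general $V(\bpi)$ factors, via the Chari--Pressley theorem, as a tensor product $V(\bpi_1)\otimes\cdots\otimes V(\bpi_k)$ where each $\bpi_j$ is a $q$-segment and the segments are arranged so the tensor product is irreducible; moreover this factorization into prime segments is unique. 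The forward direction ($\dim\Ext^1=1 \Rightarrow$ prime) is essentially immediate from Theorem (ii): if $\dim\Ext^1_{\hat{\cal F}}(V(\bpi),V(\bpi))=1$ then $\bpi=\bpi_0^s$ with $V(\bpi_0)$ prime, and since a $q$-segment raised to a power $s\ge 2$ is never again a single $q$-segment (its roots have multiplicities, or equivalently it factors nontrivially as shown in \cite{CPqa}), we must have $s=1$, so $V(\bpi)=V(\bpi_0)$ is prime.

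The substance is the converse: if $V(\bpi)$ is prime (hence $V(\bpi)=V(\bpi_0)$ is a KR module associated to a $q$-segment) then $\dim\Ext^1_{\hat{\cal F}}(V(\bpi),V(\bpi))\le 1$; combined with part (i) this gives equality. First I would invoke the results announced for Sections \ref{s:weyl} and \ref{s:tp}: self-extensions of a simple $V(\bpi)$ embed into the self-extensions of the corresponding local Weyl module, and a self-extension is determined by its top weight space. In the $\lie{sl}_2$ case the local Weyl module $W(\bpi)$ with $\deg\bpi = \ell$ has top weight space of dimension one as a module over $\hat\bu_q^0$-type data, and the relevant $\Ext^1$ computation reduces to understanding maps out of a "fundamental" object built from $W(\bpi)$. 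Concretely, a nontrivial self-extension $0\to V(\bpi)\to E\to V(\bpi)\to 0$ is controlled by the action of the commutative algebra generated by the $\Lambda_{1,r}$ (equivalently the $h_{1,s}$) on the $2$-dimensional top weight space $E^+_\lambda$: the extension class is pinned down by a single derivation-type datum, namely by how the generators $h_{1,s}(\bpi)$ get perturbed. The key point is that for a $q$-segment $\bpi$ there is exactly a one-parameter family of such perturbations (up to the equivalence coming from rescaling/coboundaries), because the Drinfeld polynomial of a prime module is, up to scalar, rigid — any deformation of its roots that stays realizable as a highest-$\ell$-weight of a length-two module must be the "uniform shift" deformation which is precisely the canonical self-extension $E(V)$ constructed in Section \ref{s:E(V)}.

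I would organize the converse as follows. Step 1: reduce via Sections \ref{s:weyl}--\ref{s:tp} to showing $\dim\Hom(\text{something},\text{something})\le 1$, i.e. that the space of self-extensions injects into a space determined by the top weight space, which for a $q$-segment is $1$-dimensional over the appropriate Weyl-module endomorphism ring. Step 2: make explicit the canonical self-extension $E(V(\bpi))$ from Section \ref{s:E(V)} in the $\lie{sl}_2$ setting and check it is nonzero (this is part (i)). Step 3: show any self-extension is a scalar multiple of $E(V(\bpi))$ modulo split extensions; here I would use that on a length-two highest-$\ell$-weight module the generator $v$ of the top composition factor, lifted to $\tilde v\in E$, satisfies $h_{1,s}\tilde v = h_{1,s}(\bpi)\tilde v + c_s v$ for scalars $c_s$, and the Serre/Drinfeld relations together with finite-dimensionality of $E$ force the sequence $(c_s)_{s\in\bz^\times}$ to lie in a $1$-dimensional space — precisely the tangent line to the curve of $q$-segments through $\bpi$. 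This last rigidity is the main obstacle: one must show that the constraints coming from $E$ being an honest finite-dimensional $\hat\bu_q$-module (not merely a module over the Heisenberg-type subalgebra) cut the a priori infinite-dimensional space of "$h$-perturbations" down to dimension $1$ when $\bpi$ is a single $q$-segment, while for a $\bpi$ with $\ge 2$ segments the analogous space has dimension $\ge 2$ (one tangent direction per segment that can be shifted independently). I expect Step 3, and specifically this dimension count, to be where essentially all the work lies; it may be cleanest to deduce it from the explicit description of $\Ext^1$ between Kirillov--Reshetikhin modules over $\hat\bu_q(\lie{sl}_2)$, using that $\bpi$ being a segment means its multiplicity-one, contiguous root pattern admits a unique infinitesimal deformation preserving the segment shape.
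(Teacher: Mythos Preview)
Your forward direction ($\dim\Ext^1=1\Rightarrow$ prime) has a genuine gap. From part (ii) you get $\bpi=\bpi_0^s$ with $V(\bpi_0)$ prime and a prime factorization $V(\bpi)\cong V(\bpi_0^{s_1})\otimes\cdots\otimes V(\bpi_0^{s_k})$; since in type $A_1$ a prime corresponds to a $q$-segment and a power $\bpi_0^{s_r}$ of a $q$-segment is itself a $q$-segment only when $s_r=1$, you conclude each $s_r=1$, hence $V(\bpi)\cong V(\bpi_0)^{\otimes s}$. But ``we must have $s=1$'' does not follow: that $V(\bpi_0)^{\otimes s}$ fails to be prime for $s\ge 2$ is no contradiction to $\dim\Ext^1=1$, since that implication is exactly what you are trying to prove. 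The paper closes this by establishing $\dim\Ext^1_{\hat{\cal F}}(V(\bpi(m,a))^{\otimes 2},V(\bpi(m,a))^{\otimes 2})\ge 2$ via an explicit construction: it writes down a codimension-two ideal $\bi\subset\ba_{2m}$, shows (using a presentation of $V(\bpi(m,a))^{\otimes 2}$ as $W(\bpi(m,a)^2)$ modulo the single relation $(x_2^- - 2aq^m x_1^- + a^2q^{2m}x_0^-)w_{2m}=0$) that the resulting self-extension of $W(\bpi(m,a)^2)$ descends to a non-split self-extension of $V(\bpi(m,a))^{\otimes 2}$, and then checks this extension is not isomorphic to $\be(V)$. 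Combined with part (iii) this gives $\dim\Ext^1\ge 2$ for all $s\ge 2$. Your ``one tangent direction per segment'' remark gestures at this but is not a construction.

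For the converse, your Steps 1--3 are in the right spirit but miss the concrete mechanism. Step 1, the embedding of $\Ext^1_{\hat{\cal F}}(V(\bpi),V(\bpi))$ into $\Ext^1_{\ba_\lambda}(\bc_\bpi,\bc_\bpi)$, is correct but only bounds $\dim\Ext^1$ by $\deg\bpi=m$, not by $1$; the further cut-down you need in Step 3 must come from the $x^\pm$-part of $\hat\bu_q$, and you give no concrete way to impose it. The paper's device is to work instead with the \emph{global} Weyl module $W(m)$: one first shows that $V(\bpi(m,a))$ is the quotient of the local Weyl module $W(\bpi(m,a))$ by the single relation $(x_1^- - aq^m x_0^-)w_m=0$, and then that this single element already generates the full kernel of the map $W(m)\to V(\bpi(m,a))$ (one checks that applying $x_r^+$ to it produces inductively all of $(\phi_r^\pm - \phi_r^\pm(\bpi))w_m$). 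Since $\Ext^1_{\hat{\cal F}}(V,V)\cong\Hom_{\hat{\cal F}}(\ker, V)$ and $\ker$ is singly generated with generator in a one-dimensional weight space, $\dim\Ext^1\le 1$ follows at once.
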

Let  $\hat{\cal F}_1$ be  the full subcategory of $\hat{\cal F}$ consisting of objects $V$ satisfying the following: if $V(\bpi)$ is a nontrivial Jordan--Holder factor of $V$ then $\dim \Ext_{\hat{\cal F}}(V(\bpi),V(\bpi))=1$. The category  $\hat{\cal F}_1$  is not closed under taking tensor products. However, we do have the following:
\begin{cor} If  $V(\bpi)\in\Ob\hat{\cal F}_1$ then all the prime factors of $V(\bpi)$ are also in $\hat{\cal F_1}$.
\end{cor}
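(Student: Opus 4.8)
The statement to prove is the Corollary: if $V(\bpi)\in\Ob\hat{\cal F}_1$, then all the prime factors of $V(\bpi)$ are also in $\hat{\cal F}_1$. Here a ``prime factor'' of $V(\bpi)$ refers to a simple prime object appearing in a decomposition $V(\bpi)\cong V(\bpi_1)\otimes\cdots\otimes V(\bpi_m)$ into primes; note also that since $V(\bpi)$ is simple, its only nontrivial Jordan--Holder factor is $V(\bpi)$ itself, so the hypothesis $V(\bpi)\in\Ob\hat{\cal F}_1$ simply means $\dim\Ext^1_{\hat{\cal F}}(V(\bpi),V(\bpi))=1$.

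The plan is to argue directly from Theorem 1(ii). First I would invoke part (ii): since $\dim\Ext^1_{\hat{\cal F}}(V(\bpi),V(\bpi))=1$, there is a prime $\bpi_0\in\cal P^+$ and $s\in\bz_+$ with $\bpi=\bpi_0^s$, together with a partition $s_1\ge\cdots\ge s_k>0$ of $s$ such that $V(\bpi_0^{s_r})$ is prime for each $r$ and $V(\bpi)\cong V(\bpi_0^{s_1})\otimes\cdots\otimes V(\bpi_0^{s_k})$. Since a prime factorization of a simple object is essentially unique (any two prime decompositions of $V(\bpi)$ agree up to permutation — this follows because the Drinfeld polynomial of a tensor product is the product of the Drinfeld polynomials, and unique factorization in $\cal P^+$ forces each $\bpi_j$ to be a product of the $\bpi_0^{s_r}$, then primeness pins them down), the prime factors of $V(\bpi)$ are exactly the $V(\bpi_0^{s_r})$, $1\le r\le k$. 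So it suffices to show each $V(\bpi_0^{s_r})$ lies in $\hat{\cal F}_1$, i.e. $\dim\Ext^1_{\hat{\cal F}}(V(\bpi_0^{s_r}),V(\bpi_0^{s_r}))=1$.

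Now fix $r$ and write $\bmu=\bpi_0^{s_r}$. By part (i), $\dim\Ext^1_{\hat{\cal F}}(V(\bmu),V(\bmu))\ge 1$, so I only need the reverse inequality. The key is to compare with $V(\bpi)=V(\bmu)\otimes W$, where $W=\bigotimes_{t\ne r}V(\bpi_0^{s_t})$; since $V(\bpi)$ is simple, the tensor product $V(\bmu)\otimes W$ is irreducible, and applying part (iii) gives
\[
\dim\Ext^1_{\hat{\cal F}}(V(\bpi),V(\bpi))\ \ge\ \dim\Ext^1_{\hat{\cal F}}(V(\bmu),V(\bmu)).
\]
Combining with the hypothesis $\dim\Ext^1_{\hat{\cal F}}(V(\bpi),V(\bpi))=1$ yields $\dim\Ext^1_{\hat{\cal F}}(V(\bmu),V(\bmu))\le 1$, hence equality, so $V(\bmu)\in\Ob\hat{\cal F}_1$ as desired. (One small point: part (iii) as stated factors the tensor product as $V(\bpi_1)\otimes V(\bpi_2)$ with two factors; I apply it with $\bpi_1$ corresponding to $\bmu$ and $\bpi_2$ to $W$, which itself need not be simple — so I would first check that the proof of (iii) only uses simplicity of the \emph{total} tensor product and of the distinguished factor $V(\bpi_1)$, not of $V(\bpi_2)$. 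If (iii) genuinely needs both factors simple, the workaround is to iterate: apply (iii) repeatedly peeling off one prime factor at a time, using at each stage that the remaining partial tensor product is still simple because it is a sub-tensor-product of the simple object $V(\bpi)$.)

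The main obstacle I anticipate is this last bookkeeping issue — ensuring part (iii) applies in the form needed, and cleanly handling the uniqueness of the prime decomposition so that ``the prime factors of $V(\bpi)$'' is unambiguous and coincides with the $V(\bpi_0^{s_r})$. Both are essentially formal once one has the multiplicativity of Drinfeld polynomials under tensor product and unique factorization in $\bc[u]$, but they deserve an explicit line. Everything else is an immediate assembly of Theorem 1 parts (i), (ii), (iii).
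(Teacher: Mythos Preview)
The paper states this Corollary without proof, as an immediate consequence of Theorem~1. Your essential argument---bounding $\dim\Ext^1$ of each prime factor from below by part~(i) and from above by part~(iii)---is correct and is surely what is intended.

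The detour through part~(ii) and the uniqueness-of-prime-factorization claim is unnecessary, and your sketched justification for uniqueness is not valid: unique factorization in the monoid $\cal P^+$ does \emph{not} force each $\bmu_j$ in an arbitrary factorization $\bpi_0^s=\bmu_1\cdots\bmu_l$ to be a power of $\bpi_0$ (e.g.\ in the $\lie{sl}_2$ case with $\bpi_0=(1-au)(1-aq^2u)$ one has $\bpi_0^2=(1-au)^2\cdot(1-aq^2u)^2$, neither factor a power of $\bpi_0$). What actually pins down the prime factors is the representation-theoretic argument inside the \emph{proof} of~(ii), not polynomial arithmetic. Fortunately you do not need any of this: your own fallback already handles an arbitrary prime decomposition $V(\bpi)\cong V(\bmu_1)\otimes\cdots\otimes V(\bmu_l)$. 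The complementary product $\bigotimes_{i\ne j}V(\bmu_i)$ is automatically simple (a proper submodule would yield one of the full product), hence isomorphic to $V(\prod_{i\ne j}\bmu_i)$, and then~(iii) applies directly. So just delete the appeal to~(ii) and the uniqueness paragraph. The one remaining wrinkle---that for $j>1$ you must reorder the tensor product to put $V(\bmu_j)$ first---is handled by the standard fact (used freely in the paper's proof of~(ii)) that an irreducible tensor product of simples remains irreducible under permutation of the factors.
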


\begin{rem} There is an important family of objects of $\hat{\cal F}$ known as local Weyl modules or standard modules which are known to be generically irreducible. The  reader is  referred to Section \ref{s:weyl} of this paper where we determine the dimension of the space of  self extensions of  these   modules.  In particular,
if $W$ is a local Weyl module then $\dim\Ext_{\hat{\cal F}}(W,W)=1$ if and only if $W$ is prime.
\end{rem}

\subsection{} It is  tempting to conjecture that part (iv) of Theorem 1 is true in general. The proof of (iv) uses very special properties of the quantum affine algebra associated to $A_1$ and these properties are known to be false in general. However, there are various well known families of representations of quantum affine algebras which have many nice properties (such as the Kirillov--Reshethikhin modules and minimal affinizations) which are either  known  to be  or easily proved to be prime. The next main result of this paper shows that for many of these families it is true that the space of self extensions is one--dimensional.

\subsection{} For $\bpi\in\cal P^+$ set $$\supp\bpi=\{i\in I:\pi_i\ne 1\}. $$ Given $i,j\in I$, let $[i,j]$ be the minimal connected subset of $I$ containing $i$ and $j$ and let $(i,j)=[i,j]\setminus\{i,j\}$.
We shall prove,
\begin{thm} \label{thm2}
 Let $\bpi\in\cal P^+$ and  assume  that,
 \begin{enumerit}
  \item[(i)] for $i\in\supp\bpi$ there exists $a_i\in\bc^\times$ such that $$\pi_i=(1-a_iuq_i^{m_i-1})(1-a_iuq_i^{m_i-3})\cdots(1-a_iuq_i^{-m_i+1}),\ \  \deg\pi_i=m_i,$$
\item[(ii)] for $i,j\in\supp\bpi$ with  $(i,j)\cap\supp\bpi=\emptyset$, we have either
 $$a_{i}=a_{j} q^{r_{i,j}},$$
 or, there exists $k\in\supp\bpi$ with $(j,k)\cap\supp\bpi=\emptyset= (i,k)\cap\supp\bpi$ and
  $$a_{j}=a_{k} q^{ r_{j,k}},\ \ a_i=a_kq^{ r_{i,k}},$$ where $  r_{i,j}=\pm\left(d_{i}m_{i} - \left(\frac12\sum_{\substack{k\ne \ell \in [i,j] }}d_ka_{k,\ell}\right)\  + d_j m_{j}\right).$
 \end{enumerit}
Then $$\dim\Ext_{\hat{\cal F}}(V(\bpi), V(\bpi))=1.$$\end{thm}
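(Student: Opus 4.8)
The plan is to reduce the computation of $\Ext^1_{\hat{\cal F}}(V(\bpi),V(\bpi))$ to a statement about the top weight space of a self extension, using the machinery developed in Sections \ref{s:weyl} and \ref{s:tp}. The key structural input (to be established in those sections) is that any self extension $0\to V(\bpi)\to E\to V(\bpi)\to 0$ is governed by the action of $\hat{\bu}_q^0$ on $E^+_\lambda$, where $\lambda=\wt V(\bpi)$: the extension class is determined by how the ``loop Cartan'' generators $h_{i,r}$ (equivalently $\Lambda_{i,r}$) act on a preimage of $v(\bpi)$, modulo the trivial extension. Concretely, $\dim\Ext^1_{\hat{\cal F}}(V(\bpi),V(\bpi))$ equals the dimension of the space of tuples $(\xi_{i,r})_{i\in I,r\in\bz^\times}$ for which the assignment $h_{i,r}\mapsto \begin{pmatrix} h_{i,r}(\bpi) & \xi_{i,r}\\ 0 & h_{i,r}(\bpi)\end{pmatrix}$ (plus the forced action of the $x^\pm$) extends to a genuine $\hat{\bu}_q$-module on $V(\bpi)\oplus V(\bpi)$, modulo coboundaries. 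So the first step is to identify this space of admissible deformations with a concrete linear-algebraic object attached to $\bpi$.

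Second, I would analyze the constraints on the $\xi_{i,r}$. The relations of $\hat{\bu}_q$ in the Drinfeld presentation — especially $[h_{i,s},x^\pm_{j,r}]=\pm\frac1s[sa_{i,j}]_{q_i}x^\pm_{j,r+s}$ and the $\phi$–$x$ commutators — force the deformation datum to be compatible across the whole module, not just on the top space. Propagating down from the highest-$\ell$-weight vector through the $x^-_{i,r}$, the admissibility of a deformation at the top becomes a system of compatibility equations indexed by the nodes $i\in\supp\bpi$ and by the ``links'' between consecutive nodes in the support. Here conditions (i) and (ii) of the theorem enter: (i) says each $\pi_i$ is the Drinfeld polynomial of a Kirillov–Reshetikhin-type factor in a single string centred at $a_i$, so the local deformation space at node $i$ is one-dimensional; (ii) is precisely the resonance condition $a_i=a_jq^{r_{i,j}}$ ensuring that the tensor factors ``interact'' — equivalently that $V(\bpi)$ is $not$ a nontrivial tensor product in a way that would split the deformation — so that the one-dimensional local data at the various nodes are forced to be proportional. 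The net effect should be: the admissible deformations form a $1$-dimensional space, and the trivial extension accounts for the $0$ class, but since the trivial extension corresponds to a coboundary that is itself admissible, one must check it does not kill the whole space; combined with Theorem 1(i) (which gives $\ge 1$), this pins the dimension to exactly $1$.

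Third, for the lower bound I would simply invoke Theorem 1(i); the real content is the upper bound $\dim\Ext^1\le 1$. For that I expect to argue by contradiction: if there were a two-dimensional space of admissible top-space deformations, one could produce a self extension whose restriction to the $\bu_q$-action, or whose $q$-character / $\ell$-weight data, is incompatible with $V(\bpi)$ being the irreducible quotient — more precisely, one could use the tensor-product criterion from Section \ref{s:tp} to deduce that $V(\bpi)$ decomposes as a tensor product in a way violating (ii). The minimal-affinization case is the ``connected path'' special case of this, where the resonance conditions are the classical ones relating the shift parameters along a line in the Dynkin diagram; the modules $S(\beta)$ with $\beta$ multiplicity-free correspond to another configuration of nodes with the same resonance structure, which is why the hypothesis on $\beta$ appears.

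The main obstacle I anticipate is the bookkeeping in the second step: controlling the deformation as it propagates through the non-top weight spaces, i.e.\ showing that a deformation which is ``allowed'' at the top $\ell$-weight actually extends consistently to all of $V(\bpi)$, and conversely that admissibility forces the resonance relations in (ii). This requires a careful analysis of the $\hat{\bu}_q^0$-module structure on $E$ and its interaction with the $x^\pm_{i,r}$, and is where the explicit form of the $\pi_i$ in hypothesis (i) is essential — without it the local deformation spaces at individual nodes need not be one-dimensional and the argument collapses. The reduction ``self extension is determined by its top weight space'' (Section \ref{s:tp}) is what makes this tractable, so the proof will lean heavily on that result.
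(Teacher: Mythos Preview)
Your high--level strategy matches the paper's: reduce to the top weight space via Corollary~\ref{top}, use the $\lie{sl}_2$ result (Proposition~\ref{reduction}(i)) to get a one--dimensional deformation space at each node of $\supp\bpi$, and then use hypothesis~(ii) to tie these local parameters together. The lower bound from Theorem~1(i) is also exactly what the paper invokes. So the skeleton is right.

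Where your proposal has a genuine gap is in the mechanism by which condition~(ii) forces the local deformation parameters to coincide. You say that~(ii) ``ensures the tensor factors interact \dots\ so that the one--dimensional local data at the various nodes are forced to be proportional,'' and later propose to argue by contradiction via the tensor--product criterion of Section~\ref{s:tp}. This is not how the paper proceeds, and it is not clear your route can be made to work: the tensor--product results in Section~\ref{s:tp} go in the wrong direction (they produce \emph{lower} bounds on $\Ext^1$ from a tensor decomposition, not upper bounds from its absence), and knowing merely that $V(\bpi)$ does not factor nontrivially does not by itself collapse the deformation space. What the paper actually does is first reduce to the case $\supp\bpi=\{1,n\}$ on an interval $[1,n]$ (Proposition~\ref{minextreme}), and then prove that case by an explicit ``walking'' computation: set $w_i=x^-_{i,0}w_{i-1}$ and $\tilde w_i=x^-_{i,0}\tilde w_{i-1}$, show inductively that each $\hat\bu_q^{i}$--submodule generated by $w_{i-1}$ is a Kirillov--Reshetikhin module, and track the deformation parameter $z_1$ from node~$1$ along the chain to node~$n$. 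The resonance condition in~(ii) is used numerically in the final step: computing $h_{n,r}\tilde w_{n-1}$ two different ways yields an identity in $r$ that forces $z_1=z_n$. The general theorem is then deduced from this two--node case by restricting a putative nontrivial extension $V$ to each interval $[i_s,i_{s+1}]$ between consecutive support nodes and iterating (Section~7.5).

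In short: you correctly anticipate that ``propagating down through the $x^-_{i,r}$'' is the heart of the matter, but you have not supplied the actual propagation argument, and the substitute you suggest (a tensor--product contradiction) is not adequate. The missing ingredient is the explicit chain of KR--type submodules $\hat\bu_q^{i+1}w_i$ and the inductive computation of the $h_{i,r}$--action on $\tilde w_i$, which is where hypothesis~(ii) is consumed.
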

\begin{rem} Observe that the second assumption in (ii) is only possible when $R$ is of type $D$ or $E$.  Notice that it follows from Theorem 1 that the simple representations satisfying the conditions of Theorem 2 are prime.
\end{rem}

\subsection{} We now make several remarks to explain the assumptions on $\bpi$. The notion of minimal affinizations was introduced in \cite{Cmin} and studied further in \cite{CPmin1},\cite{CPmin2},\cite{CPmin3}, \cite{Md4}. The first condition on $\bpi$ requires that each component define a minimal affinization for the  quantum affine algebra associated to  $A_1$. The second condition requires that if we restrict our attention to the connected subset $I_s$  of $I$ whose intersection with $\supp\bpi$ is $\{i_s,i_{s+1}\}$, then the $I_s$--tuple $(\pi_{i_s},1,\cdots, 1, \pi_{i_{s+1}})$ defines  a minimal affinization for the quantum affine algebra associated to $I_s$. For ease of exposition, we are being a bit careless here in the case of the root system of type $D_n$ but this is taken care of later in the paper.

\subsection{} As an example, if $R^+$ is of type $A_3$, then   $\bpi =(1-u, 1-q^3u, 1-u)$ and $\bpi=(1-u,1-q^3u, 1-q^6u)$ both satisfy the conditions of Theorem 2. The latter polynomial defines a minimal affinization while the former does not.

\subsection{} In the special case when $\supp\bpi=\{i_1\}$, the associated module is called a Kirillov--Reshetikhin module and our result shows that the space of self extensions of this module is one--dimensional.
\subsection{} We conclude this section by comparing the statement of Theorem \ref{thm2} with known results
for the loop algebra of $L(\lie g)$. Let $t$ be an indeterminate. Then, $$L(\lie g)=\lie g\otimes \bc[t,t^{-1}],\ \ [x\otimes t^n,y\otimes t^m]=[x,y]\otimes t^{n+m}.$$ Let $\overline{\cal F}$ be the category of finite--dimensional representations of $L(\lie g)$. Then, the irreducible representations are again given by $n$--tuples of  polynomials $\overline
{\bpi}$. The structure of the simple representations is easily described in this case and, as a consequence, one also understand the irreducible prime objects in this category. As an example, if we take $\lie g$ to be $\lie{sl}_3$, then one knows that $$\overline{V}(1-u,1-u)\cong_{\lie g} \overline{V}(\omega_1+\omega_2),$$ and hence $\overline{V}(1-u,1-u)$ is prime. It follows from the work of \cite{CG} and \cite{Kodera}  that $$\dim\Ext_{\overline{\cal F}}(\overline{V}(1-u,1-u),\overline{V}(1-u,1-u))=2.$$

It can be shown that the classical limit (as $q\to 1$) of the representation $V(\bpi)$, where $\bpi=(1-u,1-q^3u)$, is the representation $\overline{V}(1-u,1-u)$. Theorem \ref{thm2} shows however that $$\dim\Ext_{\hat{\cal F}}(V(\bpi), V(\bpi))=1\ne \dim\Ext_{\overline{\cal F}}(\overline{V}(1-u,1-u),\overline{V}(1-u,1-u)).$$

\section{Proof   of Theorem 1:  parts $\rm{(i)}$ and  $\rm{(ii)}$}\label{s:E(V)}

\subsection{}    Let $\wt:\cal P^+\to P^+$ be defined by $$\wt\bpi=\sum_{i\in I}(\deg\pi_i)\omega_i.$$
We shall say that $V\in\Ob\hat{\cal F}$ is a self extension of $V(\bpi)$, $\bpi\in\cal P^+$ if $V$ has a Jordan--Holder series of length two with both constituents being isomorphic to $V(\bpi)$, or equivalently, if we have a short exact sequence $$0\to V(\bpi)\stackrel{\iota}\longrightarrow V \stackrel{\tau}\longrightarrow V(\bpi)\to 0,$$ of objects of $\hat{\cal F}$.
We say that $V$ is a trivial self extension if $V\cong V(\bpi)\oplus V(\bpi)$ and non--trivial otherwise. Notice that $$\dim V_{\wt\bpi}=2.$$ and that $\iota(v(\bpi))$ is a highest-$\ell$-weight vector in $V_{\wt\bpi}$. Suppose that  $V$ contains another linearly independent vector $v'$ which is also highest $\ell$-weight. Then $$\dim(\hat{\bu}_qv')_{\wt\bpi}=1,\ \ {\rm {and\ so}}  \ \ v\notin\hat{\bu}_qv'.$$  Hence $\iota(V(\bpi))\cap\hat{\bu}_qv'=\{0\}$ which  implies that $\hat{\bu}_qv'\cong V(\bpi)$ and that $V$ is a trivial self extension.  Summarizing, we have proved,
 \begin{lem}\label{nontriv} Let $V\in\Ob\hat{\cal F}$ be a self extension of $V(\bpi)$. Then $V$ is nontrivial if and only if $V_{\wt\bpi}$ has a unique (up to scalars) highest-$\ell$-weight vector. Moreover, if $V$ is nontrivial and $v\in V_{\wt\bpi}$ is not an $\ell$--weight vector, then $V=\hat{\bu}_qv$.
 \hfill\qedsymbol
 \end{lem}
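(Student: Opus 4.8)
The plan is to work throughout with the short exact sequence $0\to V(\bpi)\xrightarrow{\iota}V\xrightarrow{\tau}V(\bpi)\to 0$ and the two-dimensional weight space $V_{\wt\bpi}$. First I would record the basic structure of $V_{\wt\bpi}$: taking weight spaces is exact and $\wt V(\bpi)\subseteq\wt\bpi-Q^+$ by \eqref{pbw} (since $V(\bpi)$ is generated from $V(\bpi)^+_{\wt\bpi}$), so $\wt\bpi$ is maximal in $\wt V$; hence $x^+_{i,r}V_{\wt\bpi}=0$ for all $i,r$ and every vector of $V_{\wt\bpi}$ lies in $V^+$. Consequently a vector of $V_{\wt\bpi}$ is highest-$\ell$-weight exactly when it is a joint eigenvector for the commutative subalgebra $\hat{\bu}_q(0)$, which stabilizes $V_{\wt\bpi}$; one such vector is $\iota(v(\bpi))$, of $\ell$-weight $\bpi$.

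For the easy implication, if $V\cong V(\bpi)\oplus V(\bpi)$ then $V_{\wt\bpi}$ is a sum of two lines each spanned by a highest-$\ell$-weight vector of $\ell$-weight $\bpi$, so such a vector is not unique up to scalars; by contraposition this gives ``$V$ nontrivial $\Rightarrow$ uniqueness''. For the converse I would show ``non-uniqueness $\Rightarrow$ $V$ trivial'': suppose $v'\in V_{\wt\bpi}$ is a highest-$\ell$-weight vector independent from $v:=\iota(v(\bpi))$. Then $\hat{\bu}_qv'$ is a highest-$\ell$-weight submodule, and by \eqref{pbw} its top weight space $(\hat{\bu}_qv')_{\wt\bpi}=\hat{\bu}_q(0)v'=\bc v'$ is one-dimensional, so $v\notin\hat{\bu}_qv'$. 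Thus the submodule $\iota(V(\bpi))\cap\hat{\bu}_qv'$ of the simple module $\iota(V(\bpi))$ is proper, hence zero; therefore $\tau$ restricts to an injection $\hat{\bu}_qv'\hookrightarrow V(\bpi)$ with nonzero, hence full, image, and comparing dimensions gives $V=\iota(V(\bpi))\oplus\hat{\bu}_qv'\cong V(\bpi)\oplus V(\bpi)$.

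Finally, for the ``Moreover'' statement, assume $V$ is nontrivial and let $v\in V_{\wt\bpi}$ fail to be an $\ell$-weight vector. By the first paragraph together with nontriviality, $\bc\iota(v(\bpi))$ is the unique joint eigenline of $\hat{\bu}_q(0)$ in $V_{\wt\bpi}$, so $V_{\wt\bpi}$ is a single generalized joint eigenspace for the $\ell$-weight $\bpi$ and each operator $\phi^\pm_{i,m}-\phi^\pm_{i,m}(\bpi)$ maps $V_{\wt\bpi}$ into that eigenline; since $v$ is not a joint eigenvector, this is nonzero on $v$ for some $i,m$, forcing $\iota(V(\bpi))=\hat{\bu}_q\iota(v(\bpi))\subseteq\hat{\bu}_qv$. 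On the other hand $\tau(v)\ne 0$, as otherwise $v\in\iota(V(\bpi))_{\wt\bpi}=\bc\iota(v(\bpi))$ would be an $\ell$-weight vector; since $\tau(v)$ then spans $V(\bpi)_{\wt\bpi}$ we get $\tau(\hat{\bu}_qv)=\hat{\bu}_q\tau(v)=V(\bpi)$, i.e.\ $\hat{\bu}_qv+\iota(V(\bpi))=V$, and combined with the previous inclusion this yields $\hat{\bu}_qv=V$. I expect the one genuine subtlety to be this last step --- pulling the eigenvector $\iota(v(\bpi))$ out of $\hat{\bu}_qv$ --- which rests on the commutativity of $\hat{\bu}_q(0)$ and the resulting Jordan description of its action on $V_{\wt\bpi}$; everything else is routine bookkeeping with weight spaces and the simplicity of $V(\bpi)$.
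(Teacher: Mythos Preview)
Your argument is correct and follows essentially the same route as the paper for the biconditional: the paper's proof (given in the paragraph preceding the lemma) is exactly your ``non-uniqueness $\Rightarrow$ trivial'' step, showing that a second independent highest-$\ell$-weight vector $v'$ generates a submodule with one-dimensional top weight space disjoint from $\iota(V(\bpi))$, whence $V$ splits. The paper leaves the ``Moreover'' clause unproved (the lemma carries only a \qedsymbol), whereas you supply a complete argument for it via the Jordan structure of the commuting family $\{\phi^\pm_{i,m}\}$ on the two-dimensional space $V_{\wt\bpi}$; that extra step is sound and is indeed the only point requiring any care.
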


\subsection{} A self--extension $V$ of $V(\bpi)$ defines an  element $[V]$ of  $\Ext^1_{\hat{\cal F}}(V(\bpi), V(\bpi))$. Moreover by Lemma \ref{nontriv}, we see that  $[V]=0$ iff $V$ is  the trivial self extension.
\begin{lem}\label{lindep} Let $V,V'$ be self extensions of $V(\bpi)$ for some $\bpi\in\cal P^+$. Then $V$ and $V'$ are isomorphic as $\hat{\bu}_q$--modules iff there exists $c\in\bc$ such that $[V]=c[V']$ as elements of $\Ext^1_{\hat{\cal F}}(V(\bpi), V(\bpi))$.
\end{lem}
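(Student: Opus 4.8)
The plan is to prove both directions by working with the functorial description of the extension class in terms of short exact sequences, exploiting that $V(\bpi)$ is simple so that $\Hom_{\hat{\cal F}}(V(\bpi),V(\bpi)) = \bc\,\id$. For the ``only if'' direction, suppose $\phi\colon V\to V'$ is an isomorphism of $\hat\bu_q$-modules. I would first observe that $\phi$ need not respect the given embeddings $\iota,\iota'$ of $V(\bpi)$ or the projections $\tau,\tau'$; but it does carry the unique copy of $V(\bpi)$ that is a submodule to itself, since if $V$ (resp.\ $V'$) is trivial the claim is immediate from $[V]=[V']=0$, and if both are nontrivial then by \lemref{nontriv} the socle of each is the unique copy of $V(\bpi)$, hence $\phi(\iota(V(\bpi))) = \iota'(V(\bpi))$. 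Therefore $\phi$ restricts to an automorphism of the submodule $V(\bpi)$, which by simplicity is multiplication by some $c_1\in\bc^\times$, and it induces an automorphism of the quotient $V(\bpi)$, multiplication by some $c_2\in\bc^\times$. Replacing $\phi$ by $c_2^{-1}\phi$ we may assume the induced map on the quotient is the identity; then a diagram chase (the standard verification that equivalent-after-rescaling short exact sequences give proportional Yoneda classes) shows $[V] = c_1 c_2^{-1}\,[V']$ in $\Ext^1_{\hat{\cal F}}(V(\bpi),V(\bpi))$. The only subtlety is the case where one of $V,V'$ is trivial and the other is not: but then one has a highest-$\ell$-weight vector generating a direct summand and the other does not, so they cannot be isomorphic, and simultaneously $[V]=0$ forces $c=0$ on the nontrivial side — so this case does not actually arise among isomorphic pairs, and if $[V]=c[V']=0$ then both are trivial and hence isomorphic.

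For the ``if'' direction, suppose $[V] = c[V']$ for some $c\in\bc$. If $c=0$ then $[V]=0$, so by the remark preceding the lemma $V$ is the trivial self extension $V(\bpi)\oplus V(\bpi)$; but then also $[V']$ is a scalar multiple of $[V]$ only trivially constrains $V'$ — actually I need $c$ with $[V]=c[V']$, and if $[V']\neq 0$ this forces $c=0$ hence $[V]=0$, so $V\cong V(\bpi)^{\oplus 2}$, while if $[V']=0$ then $V'\cong V(\bpi)^{\oplus 2}$ too, and in either sub-case I must check $V\cong V'$: the point is that $[V]=c[V']$ with the left side zero and right side nonzero is impossible unless $c=0$, giving $[V]=[V']\cdot 0$, consistent, and then I separately have $[V]=0$ only if I also know $[V']$'s class — so in fact the clean statement is: $[V]=c[V']$ with $V'$ nontrivial and $c\neq0$ means $[V]\neq0$, handled below; $[V]=c[V']$ with $c=0$ or $V'$ trivial means $[V]=0$ means $V$ trivial, and symmetrically (running the hypothesis the other way, or noting $[V']$ is then also a scalar multiple of $[V]$ up to the nonzero scalar) $V'$ is trivial, so $V\cong V(\bpi)^{\oplus2}\cong V'$. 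In the remaining case $c\neq0$ and $[V]=c[V']\neq0$: both sequences are nonsplit. Then I use the standard fact that two elements of $\Ext^1$ differing by a nonzero scalar have isomorphic total spaces — concretely, $c[V']$ is represented by the pushout (equivalently pullback) of the sequence for $V'$ along the automorphism $c\cdot\id$ of $V(\bpi)$, and pushout along an isomorphism yields an isomorphic middle term. Hence $V\cong V'$ as $\hat\bu_q$-modules.

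I expect the main obstacle to be bookkeeping the degenerate (trivial self extension) cases cleanly: the statement is phrased so that ``$[V]=c[V']$'' allows $c=0$, and one must make sure the iff genuinely holds there (both sides trivial) rather than only for nonsplit extensions. The representation-theoretic input — that $\End_{\hat{\cal F}}(V(\bpi))=\bc$ by \lemref{lweight} and \propref{}, and that a nontrivial self extension is indecomposable with socle $V(\bpi)$ by \lemref{nontriv} — is what lets the otherwise purely homological-algebra argument go through; the rest is the standard dictionary between Yoneda $\Ext^1$, short exact sequences up to equivalence, and pushout/pullback along scalar endomorphisms. I would keep the write-up short by invoking this dictionary rather than re-deriving it.
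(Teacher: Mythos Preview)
Your approach is essentially the paper's: reduce to the nontrivial case via Lemma~\ref{nontriv}, observe that an isomorphism must carry the unique simple submodule to itself and hence acts by scalars $c_1,c_2$ on the sub and quotient copies of $V(\bpi)$, and conclude that the Yoneda classes are proportional. The paper does exactly this, working explicitly with the highest-$\ell$-weight vector rather than invoking the socle, and arrives at $a[V]=b[V']$ with $a,b$ your $c_1,c_2$. (A careful chase gives $[V]=(c_2/c_1)[V']$, not $c_1/c_2$; this does not affect the conclusion but is worth correcting.)

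Your unease about the degenerate case is well-founded, and you should not try to dispose of it by ``symmetry'': the hypothesis $[V]=c[V']$ with $c=0$ says only that $[V]=0$ and places no constraint on $V'$, so if $V'$ is nontrivial the ``if'' direction as literally stated fails. The paper's proof simply declares this direction clear and does not address $c=0$ either. The intended statement---as the label \emph{lindep} suggests and as the lemma is actually used later---is that $V\cong V'$ iff $[V]$ and $[V']$ span the same line in $\Ext^1_{\hat{\cal F}}(V(\bpi),V(\bpi))$, and your argument establishes that version without difficulty.
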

\begin{pf}
 It is clear that if  there exists $c\in\bc$ with $[V]=c[V']$ as elements of $\Ext_{\hat{\cal F}}^1(V(\bpi), V(\bpi))$ then they must be isomorphic as $\hat{\bu}_q$--modules. For the converse let $\eta: V\to V'$ be an isomorphism of $\hat{\bu}_q$--modules.  Lemma \ref{nontriv} shows that  $V$ is a trivial self extension of $V(\bpi)$ iff $V'$ is also the trivial self--extension
  and in that case $[V]=[V']=0$.  So assume that they are both non--split i.e., the short exact sequences  $$0\to V(\bpi)\stackrel{\iota}\longrightarrow V \stackrel{\tau}\longrightarrow V(\bpi)\to 0, \qquad 0\to V(\bpi)\stackrel{\iota'}\longrightarrow V' \stackrel{\tau'}\longrightarrow V(\bpi)\to 0$$ are non--split. Then $\eta(\iota(v(\bpi))=a\iota'(v(\bpi)),$ for some $a\in\bc^\times$. Since $V(\bpi)$ is irreducible, it follows in fact that $$\eta(\iota(v)=a\iota'(v),\ \ {\rm{for \  all}}\ \ v\in V(\bpi).$$ This means that the short exact sequence
 \begin{equation}
\label{int} 0\to V(\bpi)\stackrel{\eta\circ\iota}\longrightarrow V'\stackrel{\tau'}\longrightarrow V(\bpi)\to 0,\end{equation}  defines the same equivalence class as $a^{-1}[V']$. Next, choose $v\in V$ such that $\tau(v)=v(\bpi)$. Since $V=\hat{\bu}_qv$ and  $\tau'(\eta(v))=bv(\bpi)$ for some $b\in\bc ^\times$ we get  $\tau'\circ \eta=b\tau$, i.e.,the sequence defines the same equivalence class as  $b^{-1}[V]$. Therefore, $a[V]=b[V']$ as required.
 \end{pf}

\subsection{} Recall that $\hat{\bu}_q$ is a $\bz$--graded algebra and for $r\in\bz$ let  $\hat{\bu}_q[r]$ be the $r$--th graded piece. Given any $V\in\Ob\hat{\cal F}$ let  $\be(V)\in\Ob\hat{\cal F}$ be defined by requiring   $$\be(V)=V\oplus V,$$ as vector spaces and the action of $\hat{\bu}_q$  given by extending linearly the assignment,$$ g_r(v,w)= (g_rv, rg_rv+g_rw),\qquad g_r\in \hat{\bu}_q[r],\ \  v,w\in V. $$ Clearly we have a short exact sequence of $\hat{\bu}_q$-modules \begin{equation}\label{can} 0\to V\stackrel{\iota}{\longrightarrow} \be(V) \stackrel{\tau}{\longrightarrow}V\to 0. \end{equation}
The following   proves part (i) of Theorem 1.
\begin{prop}\label{thm1i} If $\bpi\in\cal P^+$ is nontrivial,  $\be(V(\bpi))$ is a nontrivial self extension of $V(\bpi)$.
\end{prop}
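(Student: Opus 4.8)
The plan is to reduce to \lemref{nontriv} and then do one explicit computation with the Drinfeld generators $h_{i,s}$. First note that $k_i\in\hat{\bu}_q[0]$ acts diagonally on $\be(V(\bpi))=V(\bpi)\oplus V(\bpi)$, so $\be(V(\bpi))$ is indeed an object of $\hat{\cal F}$, with $\be(V(\bpi))_\mu=V(\bpi)_\mu\oplus V(\bpi)_\mu$ for every $\mu\in P$; combined with the exact sequence \eqref{can}, whose outer terms are both $V(\bpi)$, this exhibits $\be(V(\bpi))$ as a self extension of $V(\bpi)$. By \lemref{nontriv} it then suffices to show that the two-dimensional space $\be(V(\bpi))_{\wt\bpi}$ contains, up to scalars, a unique highest-$\ell$-weight vector.

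Write $v=v(\bpi)$ and identify $\be(V(\bpi))_{\wt\bpi}=\bc(v,0)\oplus\bc(0,v)$. I would first observe that every vector of this space is killed by all $x^+_{i,r}$: since $x^+_{i,r}\in\hat{\bu}_q[r]$ and $x^+_{i,r}v=0$, the definition of the action on $\be$ gives $x^+_{i,r}(v,0)=(0,0)=x^+_{i,r}(0,v)$. Hence a vector of $\be(V(\bpi))_{\wt\bpi}$ is a highest-$\ell$-weight vector exactly when it is a joint eigenvector for the $h_{i,s}$, $i\in I$, $s\in\bz^\times$. Using $h_{i,s}v=h_{i,s}(\bpi)v$ and $h_{i,s}\in\hat{\bu}_q[s]$ one computes
$$h_{i,s}(v,0)=h_{i,s}(\bpi)\,(v,0)+s\,h_{i,s}(\bpi)\,(0,v),\qquad h_{i,s}(0,v)=h_{i,s}(\bpi)\,(0,v),$$
so $(0,v)=\iota(v(\bpi))$ is an $\ell$-weight vector, while a general vector satisfies
$$h_{i,s}\big(a(v,0)+b(0,v)\big)=h_{i,s}(\bpi)\big(a(v,0)+(as+b)(0,v)\big).$$
If $a\neq 0$, this is a scalar multiple of $a(v,0)+b(0,v)$ only when $s\,h_{i,s}(\bpi)=0$. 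The final point is that, since $\bpi$ is nontrivial, there is an $i\in I$ with $\pi_i\neq 1$, and then by \eqref{hs} the scalars $h_{i,s}(\bpi)$, $s\ge 1$, cannot all vanish (otherwise $\pi_i^+(u)=\exp(0)=1$); choosing such an $i$ and an $s\ge 1$ with $h_{i,s}(\bpi)\neq 0$ forces $a=0$. Thus $\bc(0,v)$ is the unique highest-$\ell$-weight line in $\be(V(\bpi))_{\wt\bpi}$, and \lemref{nontriv} finishes the proof.

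I do not expect a serious obstacle: the statement is essentially a direct unwinding of the definition of $\be(V)$ against \lemref{nontriv}. The only place needing a little care is the interplay between the $\bz$-grading and the $\ell$-weight structure — it is precisely the ``off-diagonal'' terms $r\,g_r v$ in the action on $\be(V)$ that destroy $\ell$-weightedness of the vectors outside $\iota(V(\bpi))$, and this failure is genuine only because nontriviality of $\bpi$ guarantees that some $h_{i,s}$, $s\neq 0$, acts on $v(\bpi)$ by a nonzero scalar. One should also record in passing that $\be(V)$ is a $\hat{\bu}_q$-module: for $g_r\in\hat{\bu}_q[r]$ and $g'_{r'}\in\hat{\bu}_q[r']$ both sides of the module axiom evaluate to $\big((g_rg'_{r'})v,\ (r+r')(g_rg'_{r'})v+(g_rg'_{r'})w\big)$.
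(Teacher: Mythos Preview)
Your proof is correct and follows essentially the same route as the paper: reduce to \lemref{nontriv} and then compute the action of the graded Cartan-type generators on the two-dimensional top weight space to see that only the line $\bc\,\iota(v(\bpi))$ consists of $\ell$-weight vectors. The only cosmetic difference is that you work with the $h_{i,s}$ while the paper phrases the same computation in terms of the $\phi^\pm_{i,m}$; your additional checks (that all of $\be(V(\bpi))_{\wt\bpi}$ lies in $V^+$, and that $\be(V)$ is a module) are fine and do not change the argument.
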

\begin{pf} By Lemma \ref{nontriv} it suffices to prove that $\be(V(\bpi))_{\wt\bpi}$ has a one--dimensional space of highest-$\ell$-weight vectors. Clearly $\iota(v(\bpi))$ is a highest-$\ell$-weight vector, and
for $c_1,c_2\in\bc$, we have,\begin{equation}\label{action} \phi^\pm_{i,m}(c_1v(\bpi),\ c_2v(\bpi))=(\phi_{i,m}^\pm(\bpi) c_1v(\bpi), \ \ m\phi_{i,m}^\pm(\bpi) c_1v(\bpi)+\ \phi_{i,m}^\pm(\bpi) c_2v(\bpi)).\end{equation}
Choosing $i\in I$ and $m\in\bz$, $m>0$ with $\phi_{i,m}(\bpi)\ne 0$ we see that the right hand side of the preceding equation is a multiple of $(c_1v_\bpi, c_2v_\bpi)$ iff $c_1=0$ and the proposition is proved.
\end{pf}

\subsection{} To prove (ii) of Theorem 1, we need the following result which can be found in \cite{CPqa}. In the current formulation, it uses the formulae for the comultiplication given in \cite{Da},\cite{BCP}.
\begin{prop} Let $V_1, V_2\in\Ob\hat{\cal F}$. Let $v_1$ and $v_2$ satisfy $$x_{i,r}^+v_1=0= x_{i,r}^+v_2,\ \ i\in I, \ \ r\in\bz.$$ Then,\begin{gather*}\Delta(x_{i,r}^+)(v_1\otimes v_2)=0,\\ \\
\Delta(h_{i,s})(v_1\otimes v_2)=( h_{i,s}v_1\otimes v_2 +v_1\otimes h_{i,s}v_2),\end{gather*} for all $i\in I$, $r\in\bz$ and $s\in\bz^\times$.
\end{prop}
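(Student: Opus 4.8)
The plan is to deduce both identities from the triangular form of the comultiplication of the Drinfeld generators. Using the PBW factorization $\hat\bu_q=\hat\bu_q^-\hat\bu_q(0)\hat\bu_q^+$ of \eqref{pbw}, I will call a product $(f_1h_1e_1)\otimes(f_2h_2e_2)$ of PBW monomials, with $f_j\in\hat\bu_q^-$, $h_j\in\hat\bu_q(0)$ and $e_j\in\hat\bu_q^+$, \emph{negligible} when $e_1$ or $e_2$ is non-constant. The augmentation ideal of $\hat\bu_q^+$ is spanned by products $x_{i_1,r_1}^+\cdots x_{i_\ell,r_\ell}^+$ with $\ell\ge 1$, each of which annihilates both $v_1$ and $v_2$; hence a non-constant $e_j$ kills $v_j$, and so every negligible monomial annihilates $v_1\otimes v_2$. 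It therefore suffices to prove that $\Delta(x_{i,r}^+)$ is a sum of negligible monomials, and that so is $\Delta(h_{i,s})-h_{i,s}\otimes 1-1\otimes h_{i,s}$.

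For $\Delta(x_{i,r}^+)$ this is a weight count. Grade $\hat\bu_q$ by the root lattice $Q$ via the $k_j$-eigenvalues; the relation $k_jx_{i,r}^\pm k_j^{-1}=q_j^{\pm a_{j,i}}x_{i,r}^\pm$ shows $\wt(x_{i,r}^\pm)=\pm\alpha_i$, so $\hat\bu_q^\pm$ is supported in $\pm Q^+$ and meets weight $0$ only in $\bc$, while $\hat\bu_q(0)$ lies in weight $0$; and since $\Delta(k_j)=k_j\otimes k_j$ the map $\Delta$ is $Q$-graded. Consequently every PBW$\otimes$PBW monomial occurring in $\Delta(x_{i,r}^+)$ satisfies $\wt(f_1)+\wt(e_1)+\wt(f_2)+\wt(e_2)=\alpha_i$, and were both $e_1,e_2$ constant this would force $\wt(f_1)+\wt(f_2)=\alpha_i$ with $\wt(f_1),\wt(f_2)\le 0$, which is impossible. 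Hence every such monomial is negligible, and $\Delta(x_{i,r}^+)(v_1\otimes v_2)=0$.

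Running the same weight count on the weight-$0$ element $\Delta(h_{i,s})$ shows that every PBW$\otimes$PBW monomial in it lying outside $\hat\bu_q(0)\otimes\hat\bu_q(0)$ is negligible, so the task reduces to identifying the $\hat\bu_q(0)\otimes\hat\bu_q(0)$-component of $\Delta(h_{i,s})$ and checking it equals $h_{i,s}\otimes 1+1\otimes h_{i,s}$. For this I would use the explicit coproduct formulas for the imaginary-root generators from \cite{Da},\cite{BCP}, which present $\Delta(h_{i,s})-h_{i,s}\otimes 1-1\otimes h_{i,s}$ as a sum of terms each carrying a positive root vector in one tensor factor, hence negligible; equivalently those references identify the $\hat\bu_q(0)\otimes\hat\bu_q(0)$-component of $\Delta(\phi^+_{i,m})$ as $\sum_{t=0}^m\phi^+_{i,t}\otimes\phi^+_{i,m-t}$, and one then passes to $h_{i,s}$ through the defining relation $\sum_{m\ge 0}\phi^+_{i,m}u^m=k_i\exp\big((q_i-q_i^{-1})\sum_{s\ge 1}h_{i,s}u^s\big)$ by applying $\Delta$, using $\Delta(k_i)=k_i\otimes k_i$, and taking logarithms in the commutative algebra $\hat\bu_q(0)$ (using that $\hat\bu_q(0)$ normalizes $\hat\bu_q^\pm$ to control which products stay negligible). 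This yields $\Delta(h_{i,s})(v_1\otimes v_2)=h_{i,s}v_1\otimes v_2+v_1\otimes h_{i,s}v_2$. The one genuinely non-formal ingredient is the coproduct formula for $h_{i,s}$ (equivalently $\phi^+_{i,m}$) imported from \cite{Da},\cite{BCP}; I expect the only additional care to be the bookkeeping in this last step — tracking negligibility through products of negligible monomials — whereas the two weight counts are entirely routine.
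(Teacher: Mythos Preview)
Your proposal is correct and matches the paper's approach: the paper does not give its own proof here but simply cites \cite{CPqa} together with the comultiplication formulae of \cite{Da},\cite{BCP}, and your argument is a clean reconstruction of exactly that route. The weight count disposes of $\Delta(x_{i,r}^+)$ using only the PBW decomposition, and for $\Delta(h_{i,s})$ you correctly reduce to the $\hat\bu_q(0)\otimes\hat\bu_q(0)$-component and then invoke the same Damiani and Beck--Chari--Pressley formulae the paper references.
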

\begin{cor}\label{prod} Let $\bpi_j\in\cal P^+$ for $j=1,2$ and let $v=v(\bpi_1)\otimes v(\bpi_2)\in V(\bpi_1)\otimes V(\bpi_2)$. Then, $\hat{\bu}_qv$ is a highest-$\ell$-weight module with highest $\ell$-weight $\bpi_1\bpi_2$. In particular, it has $V(\bpi_1\bpi_2)$ as its unique irreducible quotient.
\end{cor}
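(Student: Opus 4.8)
The plan is to read off the statement directly from the preceding Proposition on the comultiplication, together with \lemref{lweight} and the classification of simple objects of $\hat{\cal F}$. First I would put $v_1=v(\bpi_1)$ and $v_2=v(\bpi_2)$; these are highest-$\ell$-weight vectors, so $x_{i,r}^+v_1=0=x_{i,r}^+v_2$ for all $i\in I$, $r\in\bz$, and the Proposition applies to them. Its first identity gives $\Delta(x_{i,r}^+)(v)=0$, i.e.\ $v\in(V(\bpi_1)\otimes V(\bpi_2))^+$. Since $\Delta(k_i)=k_i\otimes k_i$, the vector $v$ lies in the weight space of weight $\wt\bpi_1+\wt\bpi_2=\wt(\bpi_1\bpi_2)$, so it remains only to check that $v$ is an $\ell$-weight vector of $\ell$-weight $\bpi_1\bpi_2$.

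For this I would use the second identity of the Proposition: for $i\in I$ and $s\in\bz^\times$,
$\Delta(h_{i,s})(v)=h_{i,s}v_1\otimes v_2+v_1\otimes h_{i,s}v_2=\bigl(h_{i,s}(\bpi_1)+h_{i,s}(\bpi_2)\bigr)v$,
so $v$ is a joint eigenvector of the $h_{i,s}$. The only computational point is to identify $h_{i,s}(\bpi_1)+h_{i,s}(\bpi_2)$ with $h_{i,s}(\bpi_1\bpi_2)$. This is forced by the way $\cal P^+$ is defined: since $(\bpi_1\bpi_2)_i=(\bpi_1)_i(\bpi_2)_i$ one has $(\bpi_1\bpi_2)_i^\pm(u)=(\bpi_1)_i^\pm(u)\,(\bpi_2)_i^\pm(u)$, and comparing this with the exponential expressions $\pi_i^\pm(u)=\exp\bigl(-\sum_{s\ge 1}h_{i,\pm s}(\bpi)u^s/[s]_{q_i}\bigr)$ from \eqref{hs} turns the product of power series into the sum of the exponents; equivalently one may argue via \eqref{lweight2} using the multiplicativity of $\bpi\mapsto\phi^\pm_{i,m}(\bpi)$. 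Hence $v$ is a highest-$\ell$-weight vector of $\ell$-weight $\bpi_1\bpi_2$, and so $\hat{\bu}_qv$ is a highest-$\ell$-weight module with that highest $\ell$-weight.

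The last assertion is then immediate: by \lemref{lweight} the module $\hat{\bu}_qv$ has a unique irreducible quotient, and by the classification of simple objects of $\hat{\cal F}$ this quotient is the simple module of highest $\ell$-weight $\bpi_1\bpi_2$, namely $V(\bpi_1\bpi_2)$. I do not expect a genuine obstacle here; all of the substance sits in the already-stated comultiplication formulas, and the one thing to verify by hand—the multiplicativity of the assignment $\bpi\mapsto h_{i,s}(\bpi)$—is built into the definition of the monoid $\cal P^+$.
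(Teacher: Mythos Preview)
Your proposal is correct and matches the paper's approach exactly: the paper states this as an immediate corollary of the preceding Proposition on $\Delta(x_{i,r}^+)$ and $\Delta(h_{i,s})$ and does not write out a separate proof. You have simply spelled out the routine verification the paper leaves implicit, including the identification $h_{i,s}(\bpi_1)+h_{i,s}(\bpi_2)=h_{i,s}(\bpi_1\bpi_2)$ via \eqref{hs} and the appeal to \lemref{lweight} for the unique irreducible quotient.
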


\subsection{} \begin{lem}\label{nontriv2} Let $\bpi_1,\bpi_2\in\cal P^+$ be such that $V(\bpi_1)\otimes V(\bpi_2)$ is irreducible. If $V$ is a (non--trivial) self extension of $V(\bpi_1)$, then $V\otimes V(\bpi_2)$ is a (non--trivial) self extension of $V(\bpi_1)\otimes V(\bpi_2)$.  \end{lem}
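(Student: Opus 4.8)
The plan is to tensor the short exact sequence defining the self extension with $V(\bpi_2)$ and then use \lemref{nontriv} to detect non-triviality. Write $V$ as $0\to V(\bpi_1)\stackrel{\iota}{\longrightarrow} V\stackrel{\tau}{\longrightarrow} V(\bpi_1)\to 0$ in $\hat{\cal F}$. Since $\hat{\bu}_q$ is a Hopf algebra and $V(\bpi_2)$ is finite-dimensional over $\bc$, the functor $-\otimes V(\bpi_2)$ is exact on $\hat{\cal F}$, so applying it gives a short exact sequence
$$0\to V(\bpi_1)\otimes V(\bpi_2)\stackrel{\iota\otimes\id}{\longrightarrow} V\otimes V(\bpi_2)\stackrel{\tau\otimes\id}{\longrightarrow} V(\bpi_1)\otimes V(\bpi_2)\to 0$$
in $\hat{\cal F}$. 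Since $V(\bpi_1)\otimes V(\bpi_2)$ is irreducible by hypothesis, this exhibits $V\otimes V(\bpi_2)$ as a self extension of $V(\bpi_1)\otimes V(\bpi_2)$; by \corref{prod} the latter module is $V(\bpi_1\bpi_2)$, with top weight $\wt\bpi_1+\wt\bpi_2$ and highest-$\ell$-weight vector $v(\bpi_1)\otimes v(\bpi_2)$ of $\ell$-weight $\bpi_1\bpi_2$.

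Assume now that $V$ is non-trivial. By \lemref{nontriv} it is enough to show that $(V\otimes V(\bpi_2))_{\wt\bpi_1+\wt\bpi_2}$ contains a unique highest-$\ell$-weight line. Since the weights of $V$ lie in $\wt\bpi_1-Q^+$ and those of $V(\bpi_2)$ in $\wt\bpi_2-Q^+$, this weight space equals $V_{\wt\bpi_1}\otimes V(\bpi_2)_{\wt\bpi_2}$, which is two-dimensional because $\dim V_{\wt\bpi_1}=2$ and $\dim V(\bpi_2)_{\wt\bpi_2}=1$. I would next pin down the structure of the two-dimensional space $V_{\wt\bpi_1}$: because $\wt\bpi_1+\alpha_i$ is not a weight of $V$, every vector in $V_{\wt\bpi_1}$ is annihilated by all $x_{i,r}^+$, so an $\ell$-weight vector there is automatically a highest-$\ell$-weight vector; hence by \lemref{nontriv} (using that $V$ is non-trivial) the span $\bc v_1$ of $v_1=\iota(v(\bpi_1))$ is the only such line. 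Completing $\{v_1\}$ to a basis $\{v_1,v_1'\}$ of $V_{\wt\bpi_1}$, the commuting operators $h_{i,s}$ ($i\in I$, $s\in\bz^\times$) must then act by $h_{i,s}v_1=h_{i,s}(\bpi_1)v_1$ and $h_{i,s}v_1'=h_{i,s}(\bpi_1)v_1'+c_{i,s}v_1$ with the $c_{i,s}$ not all zero, for otherwise $v_1'$ would span a second highest-$\ell$-weight line.

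Finally, setting $v_2=v(\bpi_2)$ I would apply the Proposition stated just before \corref{prod}: as $x_{i,r}^+$ annihilates each of $v_1$, $v_1'$, $v_2$, both $v_1\otimes v_2$ and $v_1'\otimes v_2$ lie in $(V\otimes V(\bpi_2))^+$ and
$$\Delta(h_{i,s})(v_1\otimes v_2)=h_{i,s}(\bpi_1\bpi_2)(v_1\otimes v_2),\qquad \Delta(h_{i,s})(v_1'\otimes v_2)=h_{i,s}(\bpi_1\bpi_2)(v_1'\otimes v_2)+c_{i,s}(v_1\otimes v_2),$$
using $h_{i,s}(\bpi_1\bpi_2)=h_{i,s}(\bpi_1)+h_{i,s}(\bpi_2)$. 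Hence a vector $\alpha(v_1\otimes v_2)+\beta(v_1'\otimes v_2)$ is an $\ell$-weight vector only if $\beta c_{i,s}=0$ for all $i,s$, forcing $\beta=0$; thus $\bc(v_1\otimes v_2)$ is the unique highest-$\ell$-weight line in $(V\otimes V(\bpi_2))_{\wt\bpi_1+\wt\bpi_2}$, and \lemref{nontriv} then gives that $V\otimes V(\bpi_2)$ is a non-trivial self extension. I expect the main obstacle to be the middle paragraph: extracting from \lemref{nontriv} the precise upper-triangular, non-diagonal shape of the $h_{i,s}$-action on $V_{\wt\bpi_1}$, together with the observation that every vector of this top weight space is already killed by the raising operators. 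Once that structural input is in place, the tensor-product computation via the comultiplication formulae is routine bookkeeping.
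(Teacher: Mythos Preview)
Your proof is correct and follows essentially the same route as the paper's: both arguments reduce non-triviality of $V\otimes V(\bpi_2)$ to the uniqueness of the $\ell$-weight line in the top weight space via \lemref{nontriv}, and both use the comultiplication formula of the Proposition preceding \corref{prod} to transport the $h_{i,s}$-action on $V_{\wt\bpi_1}$ to $V_{\wt\bpi_1}\otimes\bc v(\bpi_2)$. The only cosmetic difference is that the paper argues the contrapositive (if $v_1'\otimes v(\bpi_2)$ were an $\ell$-weight vector then $v_1'$ would already be one, forcing $V$ trivial), whereas you write out the upper-triangular form explicitly; the content is the same, and your worry about the ``middle paragraph'' is unfounded---the observation that $x_{i,r}^+V_{\wt\bpi_1}=0$ follows immediately from $\wt V\subset\wt\bpi_1-Q^+$, exactly as you say.
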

\begin{pf} It is clear that $V\otimes V(\bpi_2)$ is a self extension of $V(\bpi_1)\otimes V(\bpi_2)$.  Let $v_1,v_2$ be a basis for $V_{\wt\bpi_1}$ and assume that $v_2$ is an $\ell$--weight vector. By Proposition \ref{prod} we have $$h_{i,r}(v_1\otimes v(\bpi_2) )= h_{i,r}v_1\otimes v(\bpi_2) +v_1\otimes h_{i,r}(\bpi_2)v(\bpi_2).$$ Hence $v_1\otimes v(\bpi_2)$ is an $\ell$--weight vector only if $h_{i,r}v_1\otimes v(\bpi_2)$ is a scalar multiple of $v_1\otimes v(\bpi_2)$, which implies that $h_{i,r}v_1$ is a scalar multiple of $v_1$. This  implies that $V_{\wt\bpi}$ has two linearly independent $\ell$--weight vectors and hence $V_1$ is  trivial by Lemma \ref{nontriv}.
\end{pf}

\subsection{} \begin{prop}\label{bevtensor}Let $\bpi_1,\bpi_2$ be nontrivial elements of $\cal P^+$ and assume that $(\bpi_1)^{r_1}\ne (\bpi_2)^{r_2}$ for all $r_1,r_2\in\bz_+$. Then $\be(V(\bpi_1))\otimes V(\bpi_2)$ and $V(\bpi_1)\otimes \be(V(\bpi_2))$ are non--trivial and non--isomorphic  self extensions of $V(\bpi_1)\otimes V(\bpi_2)$.
\end{prop}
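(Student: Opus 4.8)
The plan is to read everything off from the action of the commuting operators $h_{i,s}$ (with $i\in I$, $s\in\bz^\times$) on the two-dimensional top weight space of the modules involved. Write $W=V(\bpi_1)\otimes V(\bpi_2)$, put $\lambda=\wt\bpi_1+\wt\bpi_2$, and set $M_1=\be(V(\bpi_1))\otimes V(\bpi_2)$ and $M_2=V(\bpi_1)\otimes\be(V(\bpi_2))$. Tensoring the canonical sequence \eqref{can} for $V(\bpi_1)$ with $V(\bpi_2)$, and symmetrically for $M_2$, exhibits $M_1$ and $M_2$ as self extensions of $W$. Since $\be(V)$ restricts over $\bu_q$ to $V\oplus V$, the weight $\lambda$ is the top weight of each $M_k$ and $\dim(M_k)_\lambda=2$, so that $(M_k)_\lambda$ is spanned by the tensors of the top weight vectors of the two factors.

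The one computation to do is the action of the $h_{i,s}$ on $(M_1)_\lambda$. Take the basis $a=(v(\bpi_1),0)\otimes v(\bpi_2)$ and $b=(0,v(\bpi_1))\otimes v(\bpi_2)$; both $(v(\bpi_1),0)$ and $(0,v(\bpi_1))$ are killed by every $x_{i,r}^+$, as is $v(\bpi_2)$, so the coproduct formula $\Delta(h_{i,s})(v_1\otimes v_2)=h_{i,s}v_1\otimes v_2+v_1\otimes h_{i,s}v_2$ from the Proposition preceding \corref{prod}, together with the defining rule $h_{i,s}(v,w)=(h_{i,s}v,\,s\,h_{i,s}v+h_{i,s}w)$ of $\be$, gives
\[ h_{i,s}\,a=\big(h_{i,s}(\bpi_1)+h_{i,s}(\bpi_2)\big)\,a+s\,h_{i,s}(\bpi_1)\,b,\qquad h_{i,s}\,b=\big(h_{i,s}(\bpi_1)+h_{i,s}(\bpi_2)\big)\,b. \]
For $M_2$, the analogous basis $a'=v(\bpi_1)\otimes(v(\bpi_2),0)$, $b'=v(\bpi_1)\otimes(0,v(\bpi_2))$ satisfies the same relations with $\bpi_1$ and $\bpi_2$ interchanged in the off-diagonal coefficient (the diagonal entry is unchanged). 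Because $\bpi_1$ and $\bpi_2$ are non-trivial, \eqref{hs} provides $i_0,s_0$ and $i_1,s_1$ with $h_{i_0,s_0}(\bpi_1)\ne0$ and $h_{i_1,s_1}(\bpi_2)\ne0$; hence on each $(M_k)_\lambda$ some $h_{i,s}$ acts as a scalar plus a nonzero nilpotent, and the subspace of vectors in $(M_k)_\lambda$ that are common eigenvectors of all the $h_{i,s}$ is exactly the line $\bc b$ (respectively $\bc b'$).

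Non-triviality is then immediate: in the trivial self extension $W\oplus W$ the whole of $(W\oplus W)_\lambda=W_\lambda\oplus W_\lambda$ consists of common $h_{i,s}$-eigenvectors, since $W_\lambda$ is one-dimensional with $h_{i,s}$ acting there by the scalar $h_{i,s}(\bpi_1\bpi_2)$; as only a line has this property in $(M_k)_\lambda$, we conclude $M_k\not\cong W\oplus W$, so neither sequence splits. (When $W$ happens to be irreducible, this is also a consequence of \propref{thm1i} and \lemref{nontriv2}.) For non-isomorphism, suppose $\eta\colon M_1\to M_2$ were an isomorphism. Being a $\hat\bu_q$-module map, $\eta$ commutes with the $k_i$ and hence maps $(M_1)_\lambda$ isomorphically onto $(M_2)_\lambda$ while intertwining the $h_{i,s}$; so it carries the distinguished line onto the distinguished line, and after rescaling $\eta(b)=b'$, whence $\eta(a)=\alpha\,a'+\beta\,b'$ with $\alpha\ne0$ (otherwise $\eta$ is not injective on $(M_1)_\lambda$). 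Comparing $\eta(h_{i,s}\,a)$ with $h_{i,s}\,\eta(a)$ by means of the displayed identities forces $s\,h_{i,s}(\bpi_1)=\alpha\,s\,h_{i,s}(\bpi_2)$, that is
\[ h_{i,s}(\bpi_1)=\alpha\,h_{i,s}(\bpi_2)\qquad\text{for all }i\in I,\ s\in\bz^\times. \]
By \eqref{hs} this means that the $i$-th component of $\bpi_1$ equals the $\alpha$-th power of the $i$-th component of $\bpi_2$, for every $i$. Picking $i$ so that the $i$-th component of $\bpi_2$ is non-constant, the requirement that its $\alpha$-th power be a polynomial forces $\alpha$ to be a non-negative integer (a non-integral power of a non-constant polynomial with constant term $1$ is never a polynomial), and $\alpha\ne0$ since $\bpi_1$ is non-trivial. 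Hence $\alpha\in\bz_{>0}$ and $\bpi_1=\bpi_2^{\,\alpha}$, contradicting the hypothesis that $(\bpi_1)^{r_1}\ne(\bpi_2)^{r_2}$ for all $r_1,r_2\in\bz_+$. Thus no such $\eta$ exists.

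The main obstacle is the non-isomorphism part, and within it two slightly delicate points: first, that an isomorphism must respect the two distinguished lines --- this is exactly where non-triviality of $\bpi_1$ and $\bpi_2$ is used, to make the relevant $h_{i,s}$ a genuine $2\times 2$ Jordan block rather than a scalar; and second, the elementary but essential fact that a non-integral power of a non-constant polynomial is never a polynomial, which upgrades the proportionality $h_{i,s}(\bpi_1)=\alpha h_{i,s}(\bpi_2)$ to the equality $\bpi_1=\bpi_2^{\,\alpha}$ with $\alpha$ a positive integer.
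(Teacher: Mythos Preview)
Your approach is essentially the same as the paper's: compute the $h_{i,s}$-action on the two-dimensional top weight space, observe that an isomorphism must carry the unique $\ell$-weight line onto the unique $\ell$-weight line, and from this derive the proportionality $h_{i,s}(\bpi_1)=\alpha\,h_{i,s}(\bpi_2)$ for a single scalar $\alpha$ independent of $i$ and $s$. The non-triviality portion is fine.

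There is, however, a genuine gap in your final step. The assertion that ``a non-integral power of a non-constant polynomial with constant term $1$ is never a polynomial'' is false: for example $\bigl((1-au)^2\bigr)^{3/2}=(1-au)^3$. More generally, if every root of $\pi_{2,i}$ has even multiplicity then half-integer powers of $\pi_{2,i}$ are polynomials. So from $\pi_{1,i}=(\pi_{2,i})^\alpha$ you cannot conclude $\alpha\in\bz_{>0}$. What you \emph{can} conclude, by comparing degrees (or by comparing root multiplicities, which is what the paper does via a Vandermonde argument), is that $\alpha$ is a positive rational number, say $\alpha=d/d'$ with $d,d'\in\bz_{>0}$. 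Since $\alpha$ is independent of $i$, raising $\pi_{1,i}=(\pi_{2,i})^{d/d'}$ to the $d'$-th power for each $i$ gives $\bpi_1^{\,d'}=\bpi_2^{\,d}$, and this already contradicts the hypothesis $(\bpi_1)^{r_1}\ne(\bpi_2)^{r_2}$ for all $r_1,r_2\in\bz_+$. With this correction your argument goes through and matches the paper's.
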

\begin{pf}  The fact that $V(\bpi_1)\otimes \be(V(\bpi_2))$ and  $V(\bpi_1)\otimes \be(V(\bpi_2)))$ are non--trivial self extensions of $V(\bpi_1)\otimes V(\bpi_2)$ was proved in Lemma \ref{nontriv2}.
Suppose that  $\eta: \be(V(\bpi_1))\otimes V(\bpi_2)\to V(\bpi_1)\otimes \be(V(\bpi_2))$ is an isomorphism of $\hat{\bu}_q$--modules.  Then $\eta$ maps an $\ell$--weight vector in $(\be(V(\bpi_1))\otimes V(\bpi_2))_{\wt\bpi_1+\wt\bpi_2}$ to an $\ell$--weight vector in $(V(\bpi_1)\otimes \be(V(\bpi_2)))_{\wt\bpi_1+\wt\bpi_2}$ and hence we have,
$$\eta\left(\left(0,\ v(\bpi_1)\right)\otimes v(\bpi_2)\right)
= v(\bpi_1)\otimes \left(0,\ \ d v(\bpi_2)\right),$$ for some $d\in\bc$.  Moreover since $\eta$ is an isomorphism, we may and do assume without loss of generality that $d=1$.
Further, there also exist $c_1,c_2\in\bc$ such that
$$\eta\left(\left(v(\bpi_1),\ 0\right)\otimes v(\bpi_2)\right)
= v(\bpi_1)\otimes \left(c_1v(\bpi_2),\ c_2v(\bpi_2)\right).$$
By Proposition \ref{prod} again we  see that for all $i\in I$ and $r\in\bz$, $r\ne 0$, we have \begin{gather*} \eta\left( h_{i,r}\left( \left(v(\bpi_1),0\right)\otimes v(\bpi_2)\right)\right)
=\eta\left( h_{i,r}(\bpi_1) \left( v(\bpi_1), rv(\bpi_1)\right)\otimes v(\bpi_2)
+ h_{i,r}(\bpi_2)\left( v(\bpi_1),0\right)\otimes v(\bpi_2)\right)\\ = \left(h_{i,r}(\bpi_1)+h_{i,r}(\bpi_2)\right) v(\bpi_1)\otimes \left(c_1v(\bpi_2), \ c_2v(\bpi_2)\right)
+ rh_{i,r}(\bpi_1) v(\bpi_1)\otimes \left(0, v(\bpi_2)\right)\end{gather*} while,\begin{gather*}
h_{i,r}\left(v(\bpi_1)\otimes (c_1v(\bpi_2),\ c_2v(\bpi_2))\right) = h_{i,r}(\bpi_1) v(\bpi_1)\otimes  (c_1v(\bpi_2), c_2v(\bpi_2))+ \\  h_{i,r}(\bpi_2) v(\bpi_1)\otimes (c_1v(\bpi_2), \ c_1rv(\bpi_2)+c_2v(\bpi_2)).\end{gather*}
Equating, we get $$ h_{i,r}(\bpi_1)=c_1h_{i,r}(\bpi_2).$$ Writing $\bpi_1=(\pi_1,\cdots,\pi_n)$ and $\bpi_2=(\pi_1',\cdots,\pi_n')$, as $$\pi_i(u)=\prod_{s=1}^k(1-a_su)^{p_s},\ \ \pi_i'(u)=\prod_{s=1}^\ell(1-b_su)^{m_s},$$ where $a_s\ne a_r$ similarly $b_r\ne b_s$ if $r\ne s$ and $p_s>0$, $m_s>0$, we find by using \eqref{hs} that for all $r>0$, we have
 $$\frac{h_{i,r}(\bpi_1)}{[r]_i}=-\frac 1r\sum_{s=1}^k p_sa_s^r,\ \qquad \frac{h_{i,r}(\bpi_2)}{[r]_i}=-\frac 1r\sum_{s=1}^\ell m_sb_s^r.$$ Hence we get  $$\sum_{s=1}^k p_sa_s^r=c_1\sum_{s=1}^\ell m_sb_s^r,\ \ r\in\bz_+^\times.$$ If $a_1\ne b_r$ for all $1\le r\le \ell$, then we find by using the invertibility of the Vandermonde matrix that $p_1=0$ which is a contradiction. This means that we must have $\ell=k$ and also without loss of generality  $a_j=b_j$ for all $1\le j\le \ell$. This gives the equation, $$\sum_{s=1}^\ell(p_s-c_1m_s)a_s^r=0,\ \ r\in\bz, r\ne 0.$$ In particular, this means that $$p_s=c_1m_s,\ \ 1\le s\le \ell. $$ Hence $c_1$ is positive and  rational, say $c_1= \frac dd'$, for some $d,d'\in \bz_+\setminus \{0\},$and so $d'p_s= dm_s$ for all $1\le s\le\ell$. Since $c_1$ is independent of $i$ we have now proved that $$\bpi_1^{d}=\bpi_2^{d'}.$$ But this is a contradiction and hence $\eta$ is not an isomorphism.
\end{pf}
\subsection{}  We can now prove part (ii) of  Theorem 1. Observe first that if $\bpi_1$ and $\bpi_2$ are such that  $\bpi_1^{r_1}=\bpi_2^{r_2}$ for some $r_1, r_2\in\bz_+$, then there exists $\bpi_0$ such that $\bpi_1$ and $\bpi_2$ are powers of $\bpi_0$.
 Suppose that $V(\bpi)$ is not a   prime  representation and write $$V(\bpi)= V(\bpi_1)\otimes\cdots\otimes V(\bpi_k),$$ where $\bpi_s$ is nontrivial and $V(\bpi_s)$ is prime for $1\le s\le k$. Then by Proposition \ref{bevtensor} there exists $\bpi_0$ such that $$\bpi_1=\bpi_0^r,\ \ \bpi_2\cdots\bpi_k=\bpi_0^\ell,$$ (note $ V(\bpi_2)\otimes \cdots \otimes V(\bpi_k) \cong V(\bpi_2\cdots\bpi_k)$) and hence $\bpi=\bpi_0^{r+\ell}$. Since $$V(\bpi_1)\otimes V(\bpi_2)\cong V(\bpi_2)\otimes V(\bpi_1),$$ we also get by Proposition \ref{bevtensor} that $$\bpi_2^m=\bpi_1^p\bpi_3^p\cdots\bpi_k^p,$$ for some $m,p\in\bz_+$. This gives $$\bpi_2^{m+p}=\bpi^p=\bpi_0^{p(\ell+r)}.$$ Repeating we find that $\bpi_s$ is a power of $\bpi_0$ for all $1\le s\le k$ as required.

\section{Local  and global Weyl modules}\label{s:weyl} To prove the remaining results of the paper we need
 to recall the definition of global and local Weyl modules and summarize their important properties. We use the approach developed in \cite{CFK} for loop algebras.

\subsection{} Given $\lambda\in P^+$ the global Weyl module  $W(\lambda)$ is  the  $\hat{\bu}_q$--module generated by a vector $w_\lambda$ with  the following defining relations,
\begin{gather*}
k_iw_\lambda=q_i^{\lambda_i}w_\lambda, \qquad x_{i,r}^+w_\lambda=0,\\
(x_{i,0}^-)^{\lambda_i+1}w_\lambda=0,\end{gather*}
 for all $i\in I$ and $r\in\bz$. If $\lambda\ne 0$, then $W(\lambda)$ is an infinite--dimensional type 1 module while $W(0)$ is the trivial module. Note that if $\bpi\in\cal P^+$ is such that $\wt\bpi=\lambda$ and $V\in\Ob\hat{\cal F}$ is a highest-$\ell$-weight module with $\ell$-weight $\bpi$, then $V$ is a quotient of $W(\lambda)$.
The global Weyl module $W(\lambda)$ was originally defined in \cite{CPWeyl} in a different way, but it is not hard to see by using Proposition 4.3  of that paper  that the two definitions are equivalent.  It is also proved in Proposition 4.5 of \cite{CPWeyl} that $W(\lambda)$ is an integrable module: i.e the Chevalley generators $x_i^\pm$,  $i\in \hat I$ act locally nilpotently. Finally, we remark that it is proved in \cite{Na} that the global Weyl module is isomorphic to the extremal weight modules defined by Kashiwara in \cite{K}.
The following is a very special case of the  fact that $W(\lambda)$ is integrable. For all $i\in I$, we have \begin{equation}\label{intcons}\lambda-(\lambda_i+1)\alpha_i\notin \wt W(\lambda).\end{equation}

\subsection{}
Regard $W(\lambda)$  as a right-module for $\hat{\bu}_q^0$ by setting
$ (uw_\lambda)h_{i,r} =uh_{i,r}w_\lambda,$where $u\in\hat{\bu}_q$ and $i\in I$, $r\in\bz$, $r\ne 0$.

Set
\begin{equation*}
{\rm Ann}_\lambda = \{x\in \hat{\bu}_q^0: w_\lambda x = 0\} = \{x\in \hat{\bu}_q^0: xw_\lambda = 0\},
\end{equation*}
and let $\ba_\lambda$ be the quotient of $\hat{\bu}_q^0$ by the ideal ${\rm Ann}_\lambda$. Then, $W(\lambda)$ is a $(\hat{\bu}_q, \ba_\lambda)$-bimodule. For all $\mu\in P$ the subspace $W(\lambda)_\mu$ is a right $\ba_\lambda$--module. Moreover,   $W(\lambda)_\lambda$ is  obviously also a left  $\ba_\lambda$--module  and we have an isomorphism of  $\ba_\lambda$--bimodules $$W(\lambda)_\lambda\cong\ba_\lambda.$$
The structure of the ring $\ba_\lambda$ is known. Specifically regard $\hat{\bu}_q^0$ as the polynomial ring in the variables $\Lambda_{i,r}$, $i\in I$, $r\in\bz^\times$. Then $\ba_\lambda$ is the quotient obtained by setting \begin{equation}\label{defrelalambda}\Lambda_{i,r}=0,\ \ |r|\ge \lambda_i+1,
\ \ \Lambda_{i,\lambda_i}\Lambda_{i,-s}-\Lambda_{i,\lambda_i-s}=0,\end{equation} for all $i\in I$ and  $0\le s\le \lambda_i$. In particular, if we let $\bar\Lambda_{i,r}$ be the image of $\Lambda_{i,r}$ in $\ba_\lambda$, then $$\ba_\lambda\cong\bigotimes_{i\in I}\bc[\bar\Lambda_{i,1},\cdots\bar\Lambda_{i,\lambda_i},(\bar\Lambda_{i,\lambda_i})^{-1}].$$

The following important result will be crucial for the paper. The result was established when $R$ is $A_1$ in \cite{CPqweyl}, and in general the result can be deduced from the work of \cite{BN},\cite{N}. There are also other proofs of this result through the work of \cite{CPWeyl},\cite{CL},\cite{FoL},\cite{Naoi}.
\begin{thm}\label{free} The global Weyl module $W(\lambda)$ is a free right module of finite rank for $\ba_\lambda$.\hfill\qedsymbol
\end{thm}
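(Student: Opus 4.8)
The plan is to reduce the statement to two ingredients: a finite generation result and a flatness (torsion-freeness) result, and then invoke the structure of $\ba_\lambda$ described in the preceding subsection. First I would show that $W(\lambda)$ is finitely generated as a right $\ba_\lambda$-module. The module $W(\lambda)$ is generated over $\hat{\bu}_q$ by $w_\lambda$, and by the triangular decomposition \eqref{pbw} together with the identification $W(\lambda)_\lambda \cong \ba_\lambda$, every weight space $W(\lambda)_\mu$ is spanned over $\ba_\lambda$ by elements of the form $X w_\lambda$ with $X$ a product of the $x_{i,r}^-$; so finite generation over $\ba_\lambda$ amounts to showing that finitely many such monomials suffice in each weight and that only finitely many weights occur. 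The latter is exactly the integrability statement recalled above: since $x_i^\pm$, $i \in \hat I$, act locally nilpotently on $W(\lambda)$ (Proposition 4.5 of \cite{CPWeyl}), the set of weights $\wt W(\lambda)$ is $W$-stable and bounded, hence finite; and within each weight space a standard argument using the $\bu_q$-integrability (together with the PBW-type factorization of $\hat{\bu}_q^-$ into Poincaré--Birkhoff--Witt monomials in the $x_{i,r}^-$ and the $\bu_q$-module structure) shows that $W(\lambda)$ is generated as an $\ba_\lambda$-module by the $\bu_q$-submodule generated by $w_\lambda$, which is finite-dimensional. This gives a surjection $\ba_\lambda^{\oplus N} \twoheadrightarrow W(\lambda)$ of $\ba_\lambda$-modules for some $N$.

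Next I would establish that $W(\lambda)$ is projective, equivalently (since $\ba_\lambda$ is a finitely generated commutative ring and $W(\lambda)$ is finitely generated) locally free, and then upgrade local freeness to freeness using the grading. The cleanest route is to prove flatness by a base-change/specialization argument: for each maximal ideal, equivalently for each $\bpi \in \cal P^+$ with $\wt\bpi = \lambda$, one has the local Weyl module $W(\bpi) = W(\lambda) \otimes_{\ba_\lambda} \bc_\bpi$, and one knows (this is classical, going back to \cite{CPWeyl}, \cite{CPqweyl}) that $\dim_\bc W(\bpi)$ is independent of $\bpi$ — indeed equal to $\prod_{i\in I}(\dim W(\omega_i,a))^{m_{i}}$ or more precisely a product over the roots of the $\pi_i$ of dimensions of fundamental local Weyl modules, which depends only on $\lambda$. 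Combined with the surjection $\ba_\lambda^{\oplus N}\twoheadrightarrow W(\lambda)$, the constancy of fiber dimension forces the kernel to vanish after localizing at every maximal ideal (a finitely generated module over a Noetherian ring with constant fiber dimension and a presentation by a free module of the right rank is locally free), so $W(\lambda)$ is projective of constant rank. Finally, $\hat{\bu}_q$ and $\ba_\lambda$ are $\bz$-graded and $W(\lambda)$ is a graded $(\hat{\bu}_q,\ba_\lambda)$-bimodule with $w_\lambda$ in degree $0$; a finitely generated graded projective module over a (connected, or at worst $\bz$-graded with the relevant positivity) graded ring is graded free, and one extracts a homogeneous $\ba_\lambda$-basis from a homogeneous generating set by a Nakayama argument, giving the finite rank.

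The main obstacle is the flatness/projectivity step — showing the kernel of $\ba_\lambda^{\oplus N} \twoheadrightarrow W(\lambda)$ is zero. This is genuinely the content of the theorem and is where the cited works \cite{BN}, \cite{N} (and the alternative arguments via \cite{CL}, \cite{FoL}, \cite{Naoi}) do the real work: one needs an \emph{independent} computation of $\dim W(\bpi)$ that does not presuppose freeness, typically via crystal-theoretic or PBW-filtration methods, or via the $A_1$ case of \cite{CPqweyl} handled node by node together with a tensor-product decomposition of local Weyl modules over "generic" points. I would therefore not attempt to reprove it here but would cite it, structuring the write-up so that the finite-generation and graded-freeness steps above are the new content and the dimension count is quoted. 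One subtlety to flag: the ring $\ba_\lambda \cong \bigotimes_{i\in I}\bc[\bar\Lambda_{i,1},\dots,\bar\Lambda_{i,\lambda_i},\bar\Lambda_{i,\lambda_i}^{-1}]$ is a Laurent-polynomial-type ring, not connected graded in the naive sense, so the passage from locally free to graded free should be phrased carefully — but since it is a localization of a polynomial ring at a single homogeneous element, $\operatorname{Spec}\ba_\lambda$ is connected and rank is constant, so a finitely generated projective graded module is still free.
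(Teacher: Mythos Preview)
The paper does not prove this theorem: it is quoted from the literature (\cite{CPqweyl} for $A_1$; \cite{BN}, \cite{N}, with alternative routes via \cite{CPWeyl}, \cite{CL}, \cite{FoL}, \cite{Naoi} in general) and marked with a \qedsymbol. Your outline---finite generation over $\ba_\lambda$, then constancy of $\dim_\bc W(\bpi)$ over all closed points, hence projectivity, then freeness via the grading---is the standard reduction, and you correctly isolate the hard step as the independent determination of $\dim W(\bpi)$, which is exactly what those references supply. In that sense your proposal is in line with the way the result is invoked here.

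One point is wrong, however: the assertion that $W(\lambda)$ is generated as a right $\ba_\lambda$-module by the $\bu_q$-submodule $\bu_q w_\lambda$ is false. That submodule is isomorphic to the simple $\bu_q$-module $V(\lambda)$, and already for $\lie{sl}_2$ with $\lambda=2\omega_1$ one has $\dim V(\lambda)=3$ while $\dim W(\bpi)=4$, so $V(\lambda)\cdot\ba_\lambda$ is a proper $\ba_\lambda$-submodule of $W(\lambda)$. Finite generation of each weight space $W(\lambda)_\mu$ over $\ba_\lambda$ is obtained instead by a genuine straightening argument in $\hat\bu_q^-$: using the loop relations among the $x^-_{i,r}$ together with $\Lambda_{i,r}w_\lambda=0$ for $|r|>\lambda_i$, one rewrites an arbitrary monomial $x^-_{i_1,r_1}\cdots x^-_{i_k,r_k}w_\lambda$ as an $\ba_\lambda$-combination of such monomials with the $r_j$ in a fixed finite range, after which integrability bounds $k$. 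This is carried out in \cite{CPWeyl} and does not reduce to $\bu_q$-integrability alone. The remainder of your outline, including the caveat about $\ba_\lambda$ not being connected graded (which in the cited works is in any case bypassed by directly exhibiting a basis), is sound.
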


\subsection{}\label{quotvlambda} Let $\mod$-$\ba_\lambda$ be the category of finitely generated left  $\ba_\lambda$--modules and given an object  $M$ of $\mod$-$\ba_\lambda$ set $$\bw_\lambda M= W(\lambda)\otimes_{\ba_\lambda} M.$$ Since $W(\lambda)_\lambda$ is an $\ba_\lambda$--bimodule, we have  an isomorphism of left $\ba_\lambda$--modules, $$(\bw_\lambda M)_\lambda= W(\lambda)_\lambda\otimes_{\ba_\lambda}M\cong M. $$
If  $V$ is any quotient of $W(\lambda)$, then $$u V_\lambda=0,\ \ u\in{\rm Ann}_\lambda,$$ and hence the $\hat{\bu}_q^0$ action on  $V_\lambda$ descends to $\ba_\lambda$. It is simple to check
 (see \cite[Proposition 3.6]{CFK}) that $V$ is a quotient of $\bw_\lambda V_\lambda$.

\subsection{} \begin{prop}\label{exact} The assignment $$M\to\bw_\lambda M,\ \ f\to 1\otimes f, $$ defines an exact functor from the category $\mod$-$\ba_\lambda$ to $\hat{\cal F}$. Moreover, $M$ is an  indecomposable object of $\mod$-$\ba_\lambda$ iff $\bw_\lambda M$ is indecomposable in $\hat{\cal F}$.
\end{prop}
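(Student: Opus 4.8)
The plan is to prove the two assertions separately: first exactness of the functor $\bw_\lambda(-) = W(\lambda)\otimes_{\ba_\lambda}(-)$, then the equivalence of indecomposability. For exactness, the crucial input is \thmref{free}: $W(\lambda)$ is a \emph{free} right $\ba_\lambda$-module (of finite rank). Since tensoring with a free, hence flat, module is exact, the functor $M\mapsto W(\lambda)\otimes_{\ba_\lambda}M$ is exact at the level of vector spaces. What remains is to check that it genuinely lands in $\hat{\cal F}$ and that the maps $1\otimes f$ are $\hat\bu_q$-module homomorphisms, both of which are formal: the left $\hat\bu_q$-action on $W(\lambda)$ commutes with the right $\ba_\lambda$-action (as $\ba_\lambda$ is a quotient of the commutative subalgebra $\hat\bu_q^0$ acting on the right), so $\bw_\lambda M$ is a $\hat\bu_q$-module and $1\otimes f$ is $\hat\bu_q$-linear by construction. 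Finite-dimensionality of $\bw_\lambda M$ follows because $W(\lambda)$ is free of finite rank over $\ba_\lambda$ and $M$ is finitely generated; moreover the weight-space decomposition passes through the tensor product, so $\bw_\lambda M$ is of type $1$ and lies in $\hat{\cal F}$.

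For the indecomposability equivalence, I would argue via the endomorphism rings and the reconstruction identity $(\bw_\lambda M)_\lambda\cong M$ recorded just before the statement. In one direction: since $W(\lambda)_\lambda\cong\ba_\lambda$ as $\ba_\lambda$-bimodules and taking the $\lambda$-weight space is exact and functorial, any $\hat\bu_q$-endomorphism of $\bw_\lambda M$ restricts to an $\ba_\lambda$-endomorphism of $(\bw_\lambda M)_\lambda\cong M$; this gives an algebra homomorphism $\End_{\hat{\cal F}}(\bw_\lambda M)\to\End_{\ba_\lambda}(M)$. In the other direction, functoriality of $\bw_\lambda(-)$ gives $\End_{\ba_\lambda}(M)\to\End_{\hat{\cal F}}(\bw_\lambda M)$, and one checks these two maps are mutually inverse using $(\bw_\lambda M)_\lambda\cong M$ together with the fact (from \secref{s:weyl}, \ref{quotvlambda}) that $\bw_\lambda M$ is generated over $\hat\bu_q$ by its $\lambda$-weight space, so an endomorphism is determined by its restriction there. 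Hence $\End_{\hat{\cal F}}(\bw_\lambda M)\cong\End_{\ba_\lambda}(M)$ as rings. Since a nonzero module in either category is indecomposable iff its endomorphism ring has no nontrivial idempotents, the equivalence follows; one also notes $\bw_\lambda M\ne 0$ when $M\ne 0$, again from $(\bw_\lambda M)_\lambda\cong M$.

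The main obstacle is the direction ``$M$ indecomposable $\Rightarrow$ $\bw_\lambda M$ indecomposable,'' which amounts to showing that the restriction-to-$\lambda$-weight-space map $\End_{\hat{\cal F}}(\bw_\lambda M)\to\End_{\ba_\lambda}(M)$ is injective. Injectivity is exactly the statement that a $\hat\bu_q$-endomorphism vanishing on $(\bw_\lambda M)_\lambda$ vanishes identically, which needs that $\bw_\lambda M=\hat\bu_q\cdot(\bw_\lambda M)_\lambda$. This generation statement is not quite immediate from the tensor-product definition — it uses that $W(\lambda)=\hat\bu_q\cdot W(\lambda)_\lambda$ (true since $W(\lambda)$ is cyclic on $w_\lambda\in W(\lambda)_\lambda$) and then that $W(\lambda)\otimes_{\ba_\lambda}M$ is spanned by $\hat\bu_q$-translates of $w_\lambda\otimes M$. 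I would record this carefully, citing \cite[Proposition 3.6]{CFK} for the loop-algebra prototype and transporting the argument verbatim to the quantum setting, where no new ideas are required.
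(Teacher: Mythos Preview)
Your argument is correct and close in spirit to the paper's. For exactness both you and the paper simply invoke Theorem~\ref{free}. For the indecomposability equivalence the paper is more direct than your endomorphism-ring approach: given a decomposition $\bw_\lambda M=V\oplus V'$ in $\hat{\cal F}$, it just passes to $\lambda$-weight spaces to obtain $M\cong(\bw_\lambda M)_\lambda=V_\lambda\oplus V'_\lambda$ as $\ba_\lambda$-modules (using that $\mathrm{Ann}_\lambda$ kills $(\bw_\lambda M)_\lambda$, so the summands are $\ba_\lambda$-submodules), and declares the converse trivial since the functor is additive. Your route through the ring isomorphism $\End_{\hat{\cal F}}(\bw_\lambda M)\cong\End_{\ba_\lambda}(M)$ is a legitimate repackaging of the same idea and in fact yields a marginally stronger statement; both approaches ultimately rest on the generation fact $\bw_\lambda M=\hat\bu_q\cdot(\bw_\lambda M)_\lambda$ that you isolate (for you, to get injectivity of restriction; for the paper, implicitly, to ensure the induced decomposition of $M$ is nontrivial).

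One genuine slip: your sentence ``Finite-dimensionality of $\bw_\lambda M$ follows because $W(\lambda)$ is free of finite rank over $\ba_\lambda$ and $M$ is finitely generated'' is wrong as stated. Since $\ba_\lambda$ is infinite-dimensional, a finitely generated $\ba_\lambda$-module need not be finite-dimensional; indeed $\bw_\lambda\ba_\lambda\cong W(\lambda)$ is not. Freeness of finite rank gives $\dim_\bc\bw_\lambda M=(\mathrm{rk}\,W(\lambda))\cdot\dim_\bc M$, so the functor lands in $\hat{\cal F}$ precisely when $M$ is finite-dimensional. The paper's formulation is slightly loose on this point too, but every application in the paper (to $\bc_\bpi$ and to $\ba_{2m}/\bi$) is to finite-dimensional $M$, so nothing downstream is affected.
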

\begin{pf} The first statement is clear from Theorem \ref{free}. Suppose that $$\bw_\lambda M= V\oplus V',$$ as objects of $\hat{\cal F}$. Then we have $$(\bw_\lambda M)_\lambda= V_\lambda\oplus V'_\lambda,$$ as $\hat{\bu}^0_q$--modules. Since $$\rm{Ann}_\lambda(\bw_\lambda M)_\lambda=0$$ it follows that  $V_\lambda$ and $V'_\lambda$ are $\ba_\lambda$--modules and hence $$M= V_\lambda\oplus V'_\lambda$$ as $\ba_\lambda$--modules. The converse implication is trivial and the proposition is established.
\end{pf}

 \subsection{} \label{locweyl} Given $\bpi\in\cal P^+$, let $\bc_\bpi$ be the one--dimensional representation of $\hat{\bu}_q^0$ defined by taking the quotient by  the maximal ideal $\bi(\bpi)$ generated by the elements $$\{h_{i,r}-h_{i,r}(\bpi): i\in I, r\in\bz^\times \},$$ or equivalently by the elements$$\{\Lambda_{i,r}-\Lambda_{i,r}(\bpi): i\in I, r\in\bz^\times\}.$$ It is clear from \eqref{defrelalambda} that $\bc_\bpi$ is a $\ba_\lambda$--module.
 The local Weyl module $W(\bpi)$ is given by, $$W(\bpi)= \bw_\lambda\bc_\bpi,\ \ w(\bpi)=w_\lambda\otimes 1.$$ An alternative definition of $W(\bpi)$ is that it is the quotient of $W(\lambda)$ obtained by imposing the additional relations: \begin{equation}\label{finitegen} (h_{i,r}-h_{i,r}(\bpi))w_\lambda=0, \qquad{\rm equivalently}\ \ (\Lambda_{i,r}-\Lambda_{i,r}(\bpi))w_\lambda=0, \end{equation} for all $i\in I$, $r\in\bz^\times$.
Clearly $W(\bpi)$ is a highest-$\ell$-weight module with $\ell$--weight $\bpi$ and $W(\bpi)$ is universal with this property. In particular $V(\bpi)$ is a quotient of $W(\bpi)$. Moreover $V(\bpi)$ is the quotient of $W(\bpi)$ by the maximal submodule not containing $W(\bpi)_{\wt\bpi}=\bc w(\bpi)$.

\subsection{} We now prove,
\begin{prop} \label{zeroext} Let $\lambda\in P^+$ and let  $V$ be any $\hat{\bu}_q$--module such that $\wt V\subset \lambda-Q^+$  and assume that $\lambda-(\lambda_i+1)\alpha_i\notin \wt V$. Then,  $$\Ext^1_{\hat{\bu}_q}(W(\lambda), V)=0,$$ or equivalently, any short exact sequence $$0\to V\stackrel{\iota}{\longrightarrow} W\stackrel{\tau}{\longrightarrow} W(\lambda)\to 0$$ of $\hat{\bu}_q$--modules is split.
\end{prop}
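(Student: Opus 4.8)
The plan is to prove that every short exact sequence
$0\to V\stackrel{\iota}{\longrightarrow} W\stackrel{\tau}{\longrightarrow} W(\lambda)\to 0$ of $\hat{\bu}_q$--modules splits, by exhibiting a section $\sigma\colon W(\lambda)\to W$ of $\tau$. Recall that $W(\lambda)$ is generated by $w_\lambda$ subject only to the relations $k_iw_\lambda=q_i^{\lambda_i}w_\lambda$, $x_{i,r}^+w_\lambda=0$ ($i\in I$, $r\in\bz$) and $(x_{i,0}^-)^{\lambda_i+1}w_\lambda=0$ ($i\in I$). Hence, to build $\sigma$ it is enough to produce an element $\tilde w\in W$ with $\tau(\tilde w)=w_\lambda$ satisfying these three families of relations; then $w_\lambda\mapsto\tilde w$ extends (uniquely) to a $\hat{\bu}_q$--module map $\sigma$, and $\tau\circ\sigma$ is a module endomorphism of $W(\lambda)$ fixing the generator $w_\lambda$, so $\tau\circ\sigma=\id_{W(\lambda)}$ and the sequence splits.

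First I would choose the lift $\tilde w$ to be a weight vector. Since $V$ and $W(\lambda)$ are weight modules and $q$ is not a root of unity, $W$ is again a weight module, and $\tau$ is a morphism of weight modules; in particular $\tau$ maps $W_\lambda$ onto $W(\lambda)_\lambda=\bc w_\lambda$. Pick $\tilde w\in W_\lambda$ with $\tau(\tilde w)=w_\lambda$. Then the relation $k_i\tilde w=q_i^{\lambda_i}\tilde w$ holds by construction, and no hypothesis is needed for it.

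The remaining two families of relations now hold for free, thanks to the constraints on $\wt V$. For any $i\in I$ and $r\in\bz$ the vector $x_{i,r}^+\tilde w$ lies in $\ker\tau=\iota(V)$ because $\tau(x_{i,r}^+\tilde w)=x_{i,r}^+w_\lambda=0$, and it has weight $\lambda+\alpha_i$; since $-\alpha_i\notin Q^+$ we have $\lambda+\alpha_i\notin\lambda-Q^+$, so $V_{\lambda+\alpha_i}=0$ and hence $x_{i,r}^+\tilde w=0$. Likewise $(x_{i,0}^-)^{\lambda_i+1}\tilde w$ lies in $\iota(V)$ and has weight $\lambda-(\lambda_i+1)\alpha_i$, which by hypothesis is not in $\wt V$; therefore $(x_{i,0}^-)^{\lambda_i+1}\tilde w=0$. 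This verifies all the defining relations of $W(\lambda)$ for $\tilde w$, produces $\sigma$, and finishes the argument.

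I do not anticipate a serious obstacle: the proof is essentially formal once the presentation of $W(\lambda)$ is used. The only points where the hypotheses enter are the two weight--space vanishings $V_{\lambda+\alpha_i}=0$ (from $\wt V\subset\lambda-Q^+$) and $V_{\lambda-(\lambda_i+1)\alpha_i}=0$, the latter being exactly the analogue, for $V$, of the integrability constraint \eqref{intcons} enjoyed by $W(\lambda)$ itself. The only technical nicety is the assertion that $W$ is a weight module, so that $\tilde w$ can be taken of weight exactly $\lambda$; this is the standard fact that the category of type $1$ modules is closed under extensions when $q$ is not a root of unity, and I would dispatch it in a single line.
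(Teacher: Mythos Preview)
Your proof is correct and follows essentially the same route as the paper's: pick a weight-$\lambda$ lift $\tilde w$ of $w_\lambda$, observe that the relevant weight spaces vanish so that $x_{i,r}^+\tilde w=0$ and $(x_{i,0}^-)^{\lambda_i+1}\tilde w=0$, and then invoke the universal property of $W(\lambda)$ to build a section. The only cosmetic difference is that the paper phrases the vanishing in terms of $\wt W$ rather than $\wt V$, but this is equivalent since $\wt W=\wt V\cup\wt W(\lambda)$.
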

\begin{pf} Observe that $\wt W\subset\lambda-Q^+$ and also that $\lambda-(\lambda_i+1)\alpha_i\notin\wt W$. Hence if  $w\in W_\lambda$ is such that $\tau(w)=w_\lambda$, we have $$x_{i,r}^+w=0,\ \ (x_{i,0}^-)^{\lambda_i+1}w=0.$$ It follows there exists a  $\hat{\bu}_q$-module map $\eta:W(\lambda)\to\hat{\bu}_qw$ with $\eta(w_\lambda)=w$. The composite map $\tau.\eta: W(\lambda)\to W(\lambda)$ then satisfies $\tau.\eta(w_\lambda)=w_\lambda$ and hence is the identity map and the Lemma is established.
\end{pf}

\subsection{} One can compute $\Ext^r_{\hat{\bu}_q}(W(\bpi), W(\bpi))$ for all $\bpi\in\cal P^+$ using the Koszul complex for the $\ba_\lambda$--module $\bc_\bpi$ and using Theorem \ref{free}.  Consider the  the special case when $r=1$  in which case we have $$\dim\Ext^1_{\ba_\lambda}(\bc_\bpi,\bc_\bpi)=\sum_{i\in I}\lambda_i.$$
\begin{prop}\label{dimextweyl} Let $\bpi\in\cal P^+$ such that $\wt\bpi=\lambda$. Then $$\Ext^1_{\hat{\cal F}}(W(\bpi), W(\bpi)) \cong  \Ext^1_{\ba_\lambda}(\bc_\bpi,\bc_\bpi),$$ and hence $$\dim\Ext^1_{\hat{\cal F}}(W(\bpi), W(\bpi))= \sum_{i\in I}\lambda_i.$$
\end{prop}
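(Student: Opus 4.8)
The plan is to prove that the functor $V\mapsto V_\lambda$ of taking the top (weight-$\lambda$) space induces an isomorphism $\Theta\colon\Ext^1_{\hat{\cal F}}(W(\bpi),W(\bpi))\to\Ext^1_{\ba_\lambda}(\bc_\bpi,\bc_\bpi)$; the asserted dimension then follows at once from the identity $\dim\Ext^1_{\ba_\lambda}(\bc_\bpi,\bc_\bpi)=\sum_{i\in I}\lambda_i$ recalled just above. So let $0\to W(\bpi)\stackrel{\iota}{\to}W\stackrel{\tau}{\to}W(\bpi)\to 0$ be a self extension of $W(\bpi)$. Since $W(\bpi)$ is a quotient of $W(\lambda)$ we have $\wt W\subset\lambda-Q^+$, and applying \eqref{intcons} to $W(\lambda)$ shows $W_{\lambda-(\lambda_i+1)\alpha_i}=0$ for all $i\in I$; hence $\dim W_\lambda=2$, and every $v\in W_\lambda$ satisfies $k_iv=q_i^{\lambda_i}v$, $x_{i,r}^+v=0$ and $(x_{i,0}^-)^{\lambda_i+1}v=0$, i.e. all the defining relations of $W(\lambda)$ other than those involving the $h_{i,r}$.

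The crux is to show that $\mathrm{Ann}_\lambda$ annihilates $W_\lambda$, so that the $\hat{\bu}_q^0$-action on $W_\lambda$ factors through $\ba_\lambda$. Since $\iota(W(\bpi))$ is generated by $\iota(w(\bpi))\in W_\lambda$ and $W/\iota(W(\bpi))\cong W(\bpi)$ is generated by the image of $W_\lambda$, the module $W$ is generated by $W_\lambda$. Fixing a basis $v_1,v_2$ of $W_\lambda$, the previous paragraph provides $\hat{\bu}_q$-maps $W(\lambda)\to W$ sending $w_\lambda$ to $v_1$, resp. $v_2$, which together form a surjection $W(\lambda)\oplus W(\lambda)\to W$. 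Being a surjection of weight modules it is onto in each weight, so $W_\lambda$ is a quotient of $W(\lambda)_\lambda\oplus W(\lambda)_\lambda$; as $\mathrm{Ann}_\lambda$ kills $W(\lambda)_\lambda\cong\ba_\lambda$, it kills $W_\lambda$. Therefore the exact sequence of top spaces $0\to W(\bpi)_\lambda\to W_\lambda\to W(\bpi)_\lambda\to 0$ is a sequence of $\ba_\lambda$-modules whose ends are $\cong\bc_\bpi$ (recall $W(\bpi)_\lambda=(\bw_\lambda\bc_\bpi)_\lambda\cong\bc_\bpi$ from \S\ref{quotvlambda}), and since $V\mapsto V_\lambda$ is exact and additive, $[W]\mapsto[W_\lambda]$ defines a linear map $\Theta$ as claimed.

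To see that $\Theta$ is surjective, take any extension $0\to\bc_\bpi\to N\to\bc_\bpi\to 0$ of $\ba_\lambda$-modules; $N$ is two-dimensional, hence a finitely generated $\ba_\lambda$-module, and applying the exact functor $\bw_\lambda$ (\propref{exact}) yields $0\to W(\bpi)\to\bw_\lambda N\to W(\bpi)\to 0$ in $\hat{\cal F}$, with $(\bw_\lambda N)_\lambda\cong N$ by the natural isomorphism of \S\ref{quotvlambda}; thus $\Theta[\bw_\lambda N]=[N]$. For injectivity, suppose $\Theta[W]=0$, so that the sequence of top spaces splits over $\ba_\lambda$: there is a line $\bc w'\subset W_\lambda$ complementary to $\iota(W(\bpi))_\lambda$ on which $h_{i,r}$ acts by $h_{i,r}(\bpi)$ for all $r\in\bz^\times$. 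Together with $k_iw'=q_i^{\lambda_i}w'$ this forces $\phi^\pm_{i,m}w'=\phi^\pm_{i,m}(\bpi)w'$ for all $i,m$, and since $x_{i,r}^+w'=0$ the vector $w'$ is a highest-$\ell$-weight vector of $\ell$-weight $\bpi$; hence $\hat{\bu}_q w'$ is a quotient of the local Weyl module $W(\bpi)$. On the other hand $\tau(w')$ is a nonzero multiple of $w(\bpi)$, so $\tau$ restricts to a surjection $\hat{\bu}_q w'\to W(\bpi)$; comparing dimensions, $\tau|_{\hat{\bu}_q w'}$ is an isomorphism, which splits the original sequence and gives $[W]=0$.

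Combining these, $\Theta$ is an isomorphism, whence $\dim\Ext^1_{\hat{\cal F}}(W(\bpi),W(\bpi))=\sum_{i\in I}\lambda_i$. The step I expect to be the main obstacle is the vanishing $\mathrm{Ann}_\lambda\cdot W_\lambda=0$: a priori $W_\lambda$ is merely a module over the (huge) polynomial algebra $\hat{\bu}_q^0$, so it carries far more self extensions of $\bc_\bpi$ than we want, and one must genuinely exploit the integrability of $W$ — here packaged into \eqref{intcons} and the structure $W(\lambda)_\lambda\cong\ba_\lambda$ — to pin the relevant extension class down to the finite-dimensional group $\Ext^1_{\ba_\lambda}(\bc_\bpi,\bc_\bpi)$.
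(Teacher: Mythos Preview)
Your proof is correct, but it takes a different route from the paper's. The paper applies $\Hom_{\hat{\cal F}}(-,W(\bpi))$ to the short exact sequence $0\to\ker\eta\to W(\lambda)\to W(\bpi)\to 0$ and invokes Proposition~\ref{zeroext} (the vanishing $\Ext^1_{\hat\bu_q}(W(\lambda),W(\bpi))=0$) to identify $\Ext^1_{\hat{\cal F}}(W(\bpi),W(\bpi))\cong\Hom_{\hat{\cal F}}(\ker\eta,W(\bpi))$; the upper bound $\sum_i\lambda_i$ then comes from counting generators of $\ker\eta$, and the matching lower bound from the injection $\Ext^1_{\ba_\lambda}(\bc_\bpi,\bc_\bpi)\hookrightarrow\Ext^1_{\hat{\cal F}}(W(\bpi),W(\bpi))$ supplied by the exact functor $\bw_\lambda$. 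You instead build the explicit map $\Theta$ (take the $\lambda$-weight space) and prove bijectivity directly: surjectivity via the exact functor $\bw_\lambda$, injectivity via the universal property of the local Weyl module. Your argument avoids the long exact sequence and Proposition~\ref{zeroext}, replacing them by the observation that any $v\in W_\lambda$ satisfies the defining relations of $w_\lambda\in W(\lambda)$---which is exactly the computation underlying the proof of Proposition~\ref{zeroext}, so the two arguments share a common core. The paper's approach is more homological and isolates the ``projectivity'' of $W(\lambda)$ as a separate lemma reusable elsewhere; yours has the advantage of naming the isomorphism concretely as the top-weight-space functor, which is what one actually uses later (e.g.\ in Proposition~\ref{include}).
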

\begin{pf} Let $\eta: W(\lambda)\to W(\bpi)$ be the map of $\hat{\bu}_q$--modules such that $\eta(w_\lambda)=w(\bpi)$. Since $\dim W(\bpi)_\lambda=1$, we have $$\dim\Hom_{\hat{\bu}_q}(W(\bpi), W(\bpi)) \ = 1\ = \dim\Hom_{\hat{\bu}_q}(W(\lambda), W(\bpi)).$$
Applying $\Hom_{\hat{\cal F}}(-,W(\bpi))$ to the short exact sequence $0\to\ker\eta\to W(\lambda)\to W(\bpi) \to 0$,
and using Proposition \ref{zeroext} we get $$\Hom_{\hat{\bu}_q}(\ker\eta, W(\bpi))\cong\Ext^1_{\hat{\bu}_q}(W(\bpi), W(\bpi)).$$ It follows from \eqref{finitegen} that  $\ker\eta$ is generated as a $\hat{\bu}_q$--module by the elements $\{h_{i,r}-h_{i,r}(\bpi) :i\in I, r\in\bz.\}$ Since $h_{i,r}(\bpi)$ is determined by the values of $h_{i,s}(\bpi)$ for $1\le s\le \lambda_i$, we see that
$$\dim\Ext^1_{\hat{\bu}_q}(W(\bpi), W(\bpi))=\dim \Hom_{\hat{\bu}_q}(\ker\eta, W(\bpi))\le \sum_{i\in I}\lambda_i.$$
Finally, this bound must be achieved since Proposition \ref{exact} defines a map
$$\Ext^1_{\ba_\lambda}(\bc_\bpi,\bc_\bpi)\to \Ext^1_{\hat{\cal F}}(\bw_\lambda\bc_\bpi,\bw_\lambda\bc_\bpi)$$ which is injective since the functor exact.
\end{pf}

 \subsection{} The structure of the local Weyl modules is known through the work of \cite{AK}, \cite{Cbraid}, \cite{VV}. To describe the result, we recall the definition of fundamental $\ell$--weights. For $i\in I$ and $a\in\bc^\times$, let $\bpi_{i,a}=(\pi_1,\cdots, \pi_n)\in\cal P^+$ be such that $\pi_i= 1-au$ and $\pi_j=1$ otherwise.
 \begin{prop} Let $\bpi\in\cal P^+$. Then  there exists $k\in\bz_+$, $i_s\in I$, $a_s\in\bc^\times$, $1\le s\le k$, such that $$W(\bpi)\cong V(\bpi_{i_1,a_1})\otimes\cdots\otimes V(\bpi_{i_k,a_k}).$$ Moreover $W(\bpi)$ is irreducible if $a_p/a_\ell\notin q^\bz $ for all $1\le p,\ell\le s$.\hfill\qedsymbol
 \end{prop}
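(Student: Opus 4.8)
The plan is to recall the classical argument, which combines three ingredients: a factorization of $\bpi$ into fundamental $\ell$--weights, the freeness theorem \thmref{free}, and — the substantive input — the analysis of the normalized $R$--matrices between fundamental modules from \cite{AK},\cite{Cbraid},\cite{VV}. First I would factor each component, $\pi_i(u)=\prod_{s\,:\,i_s=i}(1-a_su)$, collecting all the linear factors over $i\in I$; this produces data $(i_s,a_s)_{s=1}^k$ with $k=\sum_{i\in I}\lambda_i$, where $\lambda:=\wt\bpi$, and $\bpi=\prod_{s=1}^k\bpi_{i_s,a_s}$ in the commutative monoid $\cal P^+$. For any ordering of $\{1,\dots,k\}$ set $T=V(\bpi_{i_1,a_1})\otimes\cdots\otimes V(\bpi_{i_k,a_k})$. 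Because the grading automorphism of $\hat\bu_q$ (fixing every $k_j$ and sending $x^{\pm}_{j,r}\mapsto b^rx^{\pm}_{j,r}$, $h_{j,s}\mapsto b^sh_{j,s}$) carries $V(\bpi_{i,a})$ to $V(\bpi_{i,ab})$, the dimension of $V(\bpi_{i,a})$ depends only on $i$; hence $N:=\dim T=\prod_s\dim V(\bpi_{i_s,a_s})$ depends only on $\lambda$, not on $\bpi$ or on the chosen ordering.

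Next I would invoke \cite{Cbraid} (see also \cite{AK}): the tensor factors can be ordered so that $T$ is cyclic, generated by $v:=v(\bpi_{i_1,a_1})\otimes\cdots\otimes v(\bpi_{i_k,a_k})$. Such an ordering exists because the obstruction to cyclicity is governed by the poles of the normalized $R$--matrices, which occur only when $a_s/a_t\in q^{\bz}$, and one refines the resulting partial order on the factors to a total order along which no ``bad'' pair is reversed. Granting this, the $k$--factor analogue of \corref{prod} shows that $\hat\bu_q v$ is highest-$\ell$-weight of $\ell$--weight $\prod_s\bpi_{i_s,a_s}=\bpi$; cyclicity gives $\hat\bu_q v=T$, so by the universal property of the local Weyl module (Section~\ref{locweyl}) $T$ is a quotient of $W(\bpi)$. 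In particular $\dim W(\bpi)\ge N$.

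For the reverse inequality, and for the ``moreover'' clause, I would use the fact — part of \cite{AK},\cite{VV} — that when the parameters are in general position, $a_p/a_\ell\notin q^{\bz}$ for $p\ne\ell$, the local Weyl module $W(\bpi')$ is irreducible: there the relevant normalized $R$--matrices have no poles, so the ordered tensor product is simultaneously cyclic on its highest-$\ell$-weight vector and cocyclic on its lowest one. Since by the previous paragraph $T'$ is, for a suitable ordering, a nonzero quotient of $W(\bpi')$, irreducibility forces $W(\bpi')\cong T'$ and so $\dim W(\bpi')=N$. By \thmref{free}, $W(\lambda)$ is free of finite rank over $\ba_\lambda$ and $W(\bpi)\cong W(\lambda)\otimes_{\ba_\lambda}\bc_\bpi$, so $\dim W(\bpi)$ equals that rank for every $\bpi$ with $\wt\bpi=\lambda$; in particular it equals $\dim W(\bpi')=N$. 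Combining with $\dim W(\bpi)\ge N$ and the surjection $W(\bpi)\twoheadrightarrow T$ of the previous paragraph forces $W(\bpi)\cong T$, and taking $\bpi$ itself in general position recovers the irreducibility assertion.

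The step I expect to be the main obstacle is precisely this imported ingredient: the cyclicity of a suitably reordered tensor product of fundamental modules, and its simultaneous cyclicity and cocyclicity in general position. Both reduce to locating the poles of the normalized $R$--matrices between fundamental modules — equivalently, to the braid group action on $\hat\bu_q$ and the convexity of PBW bases \cite{Beck} — and I would cite \cite{AK},\cite{Cbraid},\cite{VV} for them rather than attempting a reproof; everything else is the factorization bookkeeping above together with the formal consequences of \thmref{free} and \corref{prod}.
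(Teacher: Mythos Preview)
The paper does not prove this proposition; it attributes the result to \cite{AK}, \cite{Cbraid}, \cite{VV} and marks it with a bare \qedsymbol. Your outline is a faithful reconstruction of the standard argument behind those citations and, at the level of what is cited and how the pieces are assembled (factorization of $\bpi$, cyclicity of a suitably ordered tensor product, \thmref{free} to propagate the dimension), it agrees with the paper's approach.

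One step deserves care. Your $R$--matrix sketch for the generic case establishes that the ordered tensor product $T'$ is simultaneously cyclic and cocyclic, hence irreducible; it does \emph{not} establish that $W(\bpi')$ is irreducible. From $W(\bpi')\twoheadrightarrow T'$ and $T'$ simple you only get $T'\cong V(\bpi')$, so $\dim V(\bpi')=N$; this does not by itself bound $\dim W(\bpi')$ from above, and without that bound the freeness argument cannot start. The missing input is $\dim W(\bpi')\le N$ for at least one $\bpi'$ with $\wt\bpi'=\lambda$, and that is precisely what the references you cite provide---but by different means than the $R$--matrix: either via the PBW--type spanning set for $W(\bpi)$ in \cite{CPWeyl} (and \cite{CPqweyl} for $\lie{sl}_2$), or via the geometric identification of standard modules with ordered tensor products of fundamentals in \cite{VV}, \cite{N}, \cite{BN}. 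Once that upper bound is in hand, your argument goes through verbatim. So your citations are correct; only the informal gloss after the colon conflates ``$T'$ is irreducible'' with ``$W(\bpi')$ is irreducible,'' and the latter needs the extra ingredient.
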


In fact it was shown in \cite{Cbraid} that $W(\bpi)$ is irreducible if $a_p= a_\ell$ for all $1\le p,\ell\le s$. Combining this with Proposition \ref{dimextweyl} we get, that \begin{equation} \dim\Ext_{\hat{\cal F}}(V(\bpi),V(\bpi))=1,\ \ \bpi\ \ {\rm is\ generic}\ \ \implies V(\bpi)\ {\rm{is\ prime}}.\end{equation}

\section{Proof of Theorem 1 ${\rm(iii)}$.}\label{s:tp}
In addition to proving part (iii) of Theorem 1,  we also prove  some results on self--extensions which are needed later in the paper and use the study of global and local Weyl modules.
\subsection{}
\begin{prop}\label{include}  Let $\bpi\in\cal P^+$ and let $V$ be any self--extension of $V(\bpi)$.
 The restriction $V\to V_{\wt\bpi}$ induces an injective map of vector spaces $$\Ext^1_{\hat{\cal F}}(V(\bpi), V(\bpi))\to\Ext^1_{\ba_\lambda}(\bc_\bpi,\bc_\bpi)\cong\Ext^1_{\hat{\cal F}}(W(\bpi), W(\bpi)).$$\end{prop}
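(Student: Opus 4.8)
The plan is to realise the restriction-to-top-weight-space operation as a functor and check it is well-defined and injective on extension classes. Let $\lambda=\wt\bpi$. Given a self-extension $0\to V(\bpi)\to V\to V(\bpi)\to 0$, I would first observe that $\wt V\subset\lambda-Q^+$ and, by the integrability constraint \eqref{intcons} for $V(\bpi)$ applied to both Jordan--Holder factors, that $\lambda-(\lambda_i+1)\alpha_i\notin\wt V$ for all $i\in I$; hence by \ref{locweyl} and \ref{quotvlambda} the module $V$ is a quotient of $\bw_\lambda V_\lambda$, and in particular the $\hat\bu_q^0$-action on $V_\lambda$ descends to $\ba_\lambda$. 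Since $\dim V_\lambda=2$ with the submodule corresponding to the $\ba_\lambda$-module $\bc_\bpi$ (the image of $v(\bpi)$ is a highest-$\ell$-weight vector of $\ell$-weight $\bpi$) and the quotient also $\bc_\bpi$, the short exact sequence $0\to\bc_\bpi\to V_\lambda\to\bc_\bpi\to 0$ of $\ba_\lambda$-modules gives a class in $\Ext^1_{\ba_\lambda}(\bc_\bpi,\bc_\bpi)$. This defines the map on $\Ext$; by Lemma \ref{lindep} the isomorphism class of $V$ determines $[V]$ up to scalar, and one checks conversely that the construction respects the Baer sum, so the assignment $[V]\mapsto[V_\lambda]$ is linear. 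Finally \ref{dimextweyl} identifies $\Ext^1_{\ba_\lambda}(\bc_\bpi,\bc_\bpi)\cong\Ext^1_{\hat{\cal F}}(W(\bpi),W(\bpi))$.

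For injectivity, suppose $[V_\lambda]=0$ in $\Ext^1_{\ba_\lambda}(\bc_\bpi,\bc_\bpi)$, i.e.\ $V_\lambda\cong\bc_\bpi\oplus\bc_\bpi$ as $\ba_\lambda$-modules. Then $V_\lambda$ contains a one-dimensional $\ba_\lambda$-submodule complementary to $\bc v(\bpi)$, i.e.\ $V_\lambda$ has a second $\ell$-weight vector $v'$ linearly independent from $v(\bpi)$. But then by Lemma \ref{nontriv} the extension $V$ is trivial, so $[V]=0$ in $\Ext^1_{\hat{\cal F}}(V(\bpi),V(\bpi))$. Hence the map has trivial kernel.

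The main obstacle is not injectivity — which is essentially immediate from Lemma \ref{nontriv} once the target is correctly identified — but rather the careful verification that restriction to $V_\lambda$ genuinely lands in $\Ext^1_{\ba_\lambda}(\bc_\bpi,\bc_\bpi)$ and is a morphism of abelian groups. The point requiring care is that $V_\lambda$ is a priori only a module over $\hat\bu_q^0$ (or $\ba_\lambda$ after passing to the quotient), not over all of $\hat\bu_q$, so one must confirm: first, that $\mathrm{Ann}_\lambda$ annihilates $V_\lambda$ (this follows because $V$ is a quotient of $W(\lambda)$, via the argument of \ref{quotvlambda} combined with Proposition \ref{zeroext} applied to $\ker(W(\lambda)\to\hat\bu_q v)$), and second, that the submodule $\bc v(\bpi)\subset V_\lambda$ and the quotient are each isomorphic to $\bc_\bpi$ as $\ba_\lambda$-modules — for the sub this is clear since $v(\bpi)$ has $\ell$-weight $\bpi$, and for the quotient it follows from comparing $\ell$-weights of the image in $V(\bpi)$. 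Additivity over Baer sums then follows from functoriality of $M\mapsto\bw_\lambda M$ together with the fact (\ref{quotvlambda}, \cite[Proposition 3.6]{CFK}) that $V$ is recovered as a quotient of $\bw_\lambda V_\lambda$, so that the top-weight functor is inverse-like to $\bw_\lambda(-)$ on the relevant subcategory; I would phrase this last point using the exact functor of Proposition \ref{exact} and the natural iso $(\bw_\lambda M)_\lambda\cong M$.
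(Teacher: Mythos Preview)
Your proposal is correct and takes a genuinely different route from the paper's proof. The paper argues homologically: writing $K(\bpi)\subset\tilde K(\bpi)$ for the kernels of the canonical surjections $W(\lambda)\to W(\bpi)$ and $W(\lambda)\to V(\bpi)$, it uses Proposition~\ref{zeroext} and the long exact sequence to identify $\Ext^1_{\hat{\cal F}}(V(\bpi),V(\bpi))\cong\Hom_{\hat{\cal F}}(\tilde K(\bpi),V(\bpi))$, then obtains the injection $\Hom_{\hat{\cal F}}(\tilde K(\bpi),V(\bpi))\hookrightarrow\Hom_{\hat{\cal F}}(K(\bpi),V(\bpi))$ from the vanishing $\Hom_{\hat{\cal F}}(\tilde K(\bpi)/K(\bpi),V(\bpi))=0$ (which holds because $(\tilde K(\bpi)/K(\bpi))_\lambda=0$), and finally invokes Proposition~\ref{dimextweyl}. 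Your argument is more direct: the weight-space functor $(-)_\lambda$ is exact, so it induces a linear map on $\Ext^1$, and injectivity is then immediate from Lemma~\ref{nontriv}. Your approach makes the ``restriction to top weight space'' description in the statement completely transparent; the paper's approach is less explicit about that identification but sits more naturally inside the Weyl-module formalism of Section~\ref{s:weyl}.

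One comment: your justification of linearity via the functor $\bw_\lambda$ and its ``inverse-like'' relationship to $(-)_\lambda$ is more elaborate than needed. The functor $V\mapsto V_\lambda$ from type-1 $\hat\bu_q$-modules to $\hat\bu_q^0$-modules is exact (the weight-space decomposition is preserved by submodules and quotients), so it automatically induces a homomorphism on $\Ext^1$; the only thing left to check---which you do correctly---is that for a self-extension of $V(\bpi)$ the resulting $\hat\bu_q^0$-extension is in fact an $\ba_\lambda$-extension, and this follows from your observation that a nontrivial $V$ is cyclic (Lemma~\ref{nontriv}) and hence a quotient of $W(\lambda)$. The appeal to Lemma~\ref{lindep} is unnecessary for well-definedness.
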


    \begin{pf} Let $K(\bpi)$ (resp. $\tilde{K}(\bpi)$) be the kernel of the map $W(\lambda)\to W(\bpi)$ ({\rm resp.} $W(\lambda)\to V(\bpi))$). Clearly $K(\bpi)\subset\tilde K(\bpi)$ and moreover, since $$ W(\bpi)_{\wt\bpi}\cong V(\bpi)_{\wt\bpi}\cong\bc_\bpi,$$ it follows that \begin{equation}\label{homzero}\tilde{K}(\bpi)_{\wt\bpi}=K(\bpi)_{\wt\bpi},\ \ {\rm and}\ \ \Hom_{\hat{\cal F}}(\tilde K(\bpi)/K(\bpi), V(\bpi))=0.\end{equation}

     Using Proposition \ref{zeroext} and the fact that \begin{gather*}\dim \Hom_{\hat{\cal F}}(W(\bpi), W(\bpi)) = \dim \Hom_{\hat{\cal F}}(W(\lambda), W(\bpi))\\ =\dim \Hom_{\hat{\cal F}}(W(\lambda), V(\bpi))=\dim \Hom_{\hat{\cal F}}(V(\bpi), V(\bpi))=1,\end{gather*} we see that \begin{gather}\label{ext1}\Ext^1_{\hat{\cal F}}(W(\bpi), W(\bpi))\cong\Hom_{\hat{\cal F}}(K(\bpi), W(\bpi)),\\ \label{ext2}\Ext^1_{\hat{\cal F}}(V(\bpi), V(\bpi))\cong\Hom_{\hat{\cal F}}(\tilde{K}(\bpi),V(\bpi)).\end{gather}
 Applying $\Hom_{\hat{\cal F}}(-, V(\bpi))$ to the short exact sequence $$0\to K(\bpi)\to\tilde K(\bpi)\to\tilde K(\bpi)/K(\bpi)\to 0,$$ and using \eqref{homzero} we get an inclusion, $$\Hom_{\hat{\cal F}}(\tilde K(\bpi), V(\bpi))\hookrightarrow\Hom_{\hat{\cal F}}(K(\bpi), V(\bpi)).$$ The Proposition now follows from \eqref{ext1}, \eqref{ext2} and  Proposition \ref{dimextweyl}.\end{pf}

\subsection{} We now determine the image of the inclusion given in Proposition \ref{include}. Equivalently, we answer the following question: when does a self--extension of $W(\bpi)$ determine a self--extension of $V(\bpi)$. It is convenient to introduce the following notation: given any $\hat{\bu}_q$--module $W$ with $\wt W\subset\lambda-Q^+$, let $W^\lambda$ be the unique maximal submodule of $W$ such that $$W^\lambda\cap W_\lambda=0.$$ It is clear that $W^\lambda$ exists and is unique -- one just takes the sum of all submodules $U$ of $W$ such that $U_\lambda=0$. Recall that $$W(\bpi)/W(\bpi)^\lambda\cong V(\bpi).$$

 \begin{lem}\label{image} Let $\bpi\in\cal P^+$ and assume that $W$ is a (non--split) self extension of $W(\bpi)$, $$0\to W(\bpi)\stackrel{\iota}\longrightarrow W\stackrel{\tau}\longrightarrow W(\bpi)\to 0.$$ If  $\tau(W^{\wt\bpi})=W(\bpi)^{\wt\bpi}$, there exists a (non--split) self--extension $V$ of $V(\bpi)$ with $$ V_{\wt\bpi}\cong_{\ba_\lambda} W_{\wt\bpi}.$$
 \end{lem}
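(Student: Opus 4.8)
The plan is to produce $V$ as a quotient of $W$ by the right ``$\ba_\lambda$-analogue'' of the submodule $W(\bpi)^\lambda\subset W(\bpi)$, and then to check that this quotient is a non-split self-extension of $V(\bpi)$ whose top weight space has the prescribed $\ba_\lambda$-module structure. First I would fix the notation: write $\lambda=\wt\bpi$, let $N=W(\bpi)^\lambda$ so that $W(\bpi)/N\cong V(\bpi)$, and observe that by hypothesis $\tau(W^\lambda)=N$. The natural candidate is to set
$$V=W/\tau^{-1}(N)^\circ,$$
but more precisely one should take $V = W / U$ where $U$ is the maximal submodule of $W$ with $U_\lambda=0$, i.e.\ $U=W^\lambda$. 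The key point to establish is that $\iota^{-1}(W^\lambda) = N = W(\bpi)^\lambda$, which uses that $\iota(W(\bpi))_\lambda$ is a proper part of $W_\lambda$ and that $\tau(W^\lambda)=N$: a submodule of $W$ with trivial $\lambda$-component meets $\iota(W(\bpi))$ in a submodule with trivial $\lambda$-component, hence inside $\iota(N)$; conversely $\iota(N)$ itself has trivial $\lambda$-component so lies in $W^\lambda$.

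Granting $\iota^{-1}(W^\lambda)=N$, the snake lemma applied to
$$\begin{CD}
0 @>>> W(\bpi) @>\iota>> W @>\tau>> W(\bpi) @>>> 0\\
@. @VVV @VVV @VVV @.\\
0 @>>> V(\bpi) @>>> V @>>> V(\bpi) @>>> 0
\end{CD}$$
with vertical maps the canonical quotients $W(\bpi)\to V(\bpi)$, $W\to V$, $W(\bpi)\to V(\bpi)$ shows that the bottom row is exact: the left vertical map has kernel $N$, the middle vertical map has kernel $W^\lambda$, and by the hypothesis $\tau(W^\lambda)=N$ the right vertical map's kernel is $N$ as well, so all the connecting maps vanish and $V$ is a self-extension of $V(\bpi)$. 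Since $W\to V$ is an isomorphism on the $\lambda$-weight space, $W_\lambda\cong V_\lambda$ as vector spaces and, because $W^\lambda$ was $\hat\bu_q^0$-stable with trivial $\lambda$-component, the $\hat\bu_q^0$- (hence $\ba_\lambda$-) action descends, giving $V_\lambda\cong_{\ba_\lambda}W_\lambda$.

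It remains to see that $V$ is non-split. If $V$ were split, $V_\lambda$ would contain two linearly independent $\ell$-weight vectors (both highest-$\ell$-weight of $\ell$-weight $\bpi$) by Lemma~\ref{nontriv}; but $V_\lambda\cong_{\ba_\lambda}W_\lambda$ and $W$ is non-split, so by Lemma~\ref{nontriv} again $W_\lambda$ has a \emph{unique} (up to scalar) highest-$\ell$-weight vector, i.e.\ $\ba_\lambda$ does not act diagonalizably on it; this contradiction forces $V$ to be non-split. The step I expect to be the main obstacle is the verification that $\iota^{-1}(W^\lambda)=W(\bpi)^\lambda$ under the hypothesis $\tau(W^\lambda)=W(\bpi)^\lambda$; one inclusion is formal, but the other requires knowing that $W^\lambda$ cannot be ``too large'' — concretely, that $W^\lambda\cap\iota(W(\bpi))$ has trivial $\lambda$-component, so sits inside $\iota(W(\bpi)^\lambda)$ — and then combining this with the surjectivity of $\tau|_{W^\lambda}$ onto $W(\bpi)^\lambda$ to conclude $W^\lambda = \iota(W(\bpi)^\lambda) + (\text{a lift of }W(\bpi)^\lambda)$ has exactly the right size. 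Everything else is a diagram chase plus the already-established dictionary between top weight spaces and $\ba_\lambda$-modules.
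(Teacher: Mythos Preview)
Your approach is exactly the paper's: set $V=W/W^{\wt\bpi}$, use $\iota(W(\bpi)^{\wt\bpi})\subset W^{\wt\bpi}$ together with the hypothesis $\tau(W^{\wt\bpi})=W(\bpi)^{\wt\bpi}$ to get a short exact sequence $0\to W(\bpi)^{\wt\bpi}\to W^{\wt\bpi}\to W(\bpi)^{\wt\bpi}\to 0$, and pass to the quotient; the paper's proof is in fact terser than yours and leaves the $\ba_\lambda$-isomorphism and the non-split assertion implicit. One small wrinkle: in your non-split argument you invoke Lemma~\ref{nontriv} for $W$, but as stated that lemma is only for self-extensions of the \emph{simple} module $V(\bpi)$; the analogous claim for $W(\bpi)$ is true---a second independent $\ell$-weight vector $w'\in W_\lambda$ generates a highest-$\ell$-weight submodule onto which the universal property gives a surjection $W(\bpi)\twoheadrightarrow\hat\bu_q w'$, and composing with $\tau$ one sees $\tau|_{\hat\bu_q w'}$ is an isomorphism, splitting the sequence---but it needs this extra line (or an appeal to Proposition~\ref{dimextweyl}) rather than a direct citation.
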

 \begin{pf} Since $\iota(W(\bpi)^{\wt\bpi})\subset W^{\wt\bpi}$ we see that the restrictions of $\iota$ and $\tau$ give a short exact sequence $$0\to W(\bpi)^{\wt\bpi}\stackrel{\iota}\longrightarrow W^{\wt\bpi}\stackrel{\tau}\longrightarrow W(\bpi)^{\wt\bpi}\to 0.$$ Setting $V=W/W^{\wt\bpi}$ it follows that $V$ is a self--extension of $V(\bpi)$.
 \end{pf}

\subsection{} \begin{prop}  Suppose that  $V$ is a nontrivial self--extension of $V(\bpi)$ and $\wt\bpi=\lambda$. Then $$V\cong \bw_\lambda V_\lambda/(\bw_\lambda V_\lambda)^\lambda.$$\end{prop}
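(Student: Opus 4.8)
The statement is that a nontrivial self-extension $V$ of $V(\bpi)$ is recovered from its top weight space $V_\lambda$ via the Weyl functor, i.e.\ $V\cong\bw_\lambda V_\lambda/(\bw_\lambda V_\lambda)^\lambda$. The plan is to compare $V$ with the candidate module $\tilde V:=\bw_\lambda V_\lambda/(\bw_\lambda V_\lambda)^\lambda$ using the universal property of the global Weyl module and the fact, recorded in Section~\ref{s:weyl}, that any quotient $V$ of $W(\lambda)$ is itself a quotient of $\bw_\lambda V_\lambda$ (see \cite[Proposition 3.6]{CFK}). First I would observe that since $\wt V\subset\lambda-Q^+$, $\dim V_\lambda=2$, and $\lambda-(\lambda_i+1)\alpha_i\notin\wt V$ by \eqref{intcons} (the integrability constraint passes to the subquotients $V(\bpi)$, hence to $V$), the top weight space generators of $V$ satisfy the defining relations of $W(\lambda)$. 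Thus there is a surjection $W(\lambda)\twoheadrightarrow V$, and by the discussion in \ref{quotvlambda} the $\hat\bu_q^0$-action on $V_\lambda$ factors through $\ba_\lambda$, making $V_\lambda$ an object of $\mode$-$\ba_\lambda$ and giving a surjection $\bw_\lambda V_\lambda\twoheadrightarrow V$.

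Next I would analyze the kernel of $\bw_\lambda V_\lambda\twoheadrightarrow V$. The key point is that this kernel must have trivial intersection with the top weight space: the map $\bw_\lambda V_\lambda\to V$ is an isomorphism on the $\lambda$-weight space because $(\bw_\lambda M)_\lambda\cong M$ as left $\ba_\lambda$-modules for any $M\in\mode$-$\ba_\lambda$ (see \ref{quotvlambda}), and here $M=V_\lambda$. Therefore the kernel is contained in $(\bw_\lambda V_\lambda)^\lambda$, the unique maximal submodule meeting the $\lambda$-weight space trivially, which was introduced just before Lemma~\ref{image}. This yields a surjection $\tilde V=\bw_\lambda V_\lambda/(\bw_\lambda V_\lambda)^\lambda\twoheadrightarrow V$.

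To get injectivity, hence an isomorphism, I would run the dimension or length count. Both $V$ and $\tilde V$ surject onto each other's... more precisely: $V$ is a self-extension of $V(\bpi)$, so it has length $2$ with both Jordan--Hölder factors $V(\bpi)$, and in particular $\dim V=2\dim V(\bpi)$. For $\tilde V$, the quotient $\bw_\lambda V_\lambda/(\bw_\lambda V_\lambda)^\lambda$ has $\lambda$-weight space of dimension $2$ (isomorphic to $V_\lambda$), and one argues it again has length $2$: it is a highest-weight type object whose only possible composition factors in the $\lambda$-weight space region are copies of $V(\bpi)$, since the top $\ell$-weight is $\bpi$ (or rather $V_\lambda$ as an $\ba_\lambda$-module is an extension of $\bc_\bpi$ by $\bc_\bpi$, because $V_\lambda\cong (V(\bpi))_\lambda$ appears twice in the filtration of $V_\lambda$ coming from the short exact sequence $0\to V(\bpi)\to V\to V(\bpi)\to 0$). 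So $\dim\tilde V\le 2\dim V(\bpi)=\dim V$, and a surjection between finite-dimensional spaces of equal dimension is an isomorphism.

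**Main obstacle.** The delicate step is showing $\dim\tilde V\le 2\dim V(\bpi)$, equivalently that $\bw_\lambda V_\lambda/(\bw_\lambda V_\lambda)^\lambda$ has length exactly $2$ and not more. This requires knowing that $V_\lambda$, as an $\ba_\lambda$-module, is a length-$2$ module with both factors $\bc_\bpi$ — which follows from the exact sequence defining the self-extension restricted to the $\lambda$-weight space, together with exactness of the functor $\bw_\lambda$ (Proposition~\ref{exact}) and the fact that $\bw_\lambda\bc_\bpi=W(\bpi)$ has $V(\bpi)$ as its quotient by the maximal submodule avoiding the top. One then needs that passing to the quotient by $(-)^\lambda$ kills exactly the two copies of $W(\bpi)^\lambda$ sitting inside, producing precisely $V(\bpi)$ twice; this is the content already packaged in Lemma~\ref{image}, and I expect the proof to invoke it directly rather than re-deriving it.
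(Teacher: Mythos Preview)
Your overall strategy matches the paper's: show $V$ is a quotient of $W(\lambda)$, hence of $\bw_\lambda V_\lambda$, and then compare $V$ with $\tilde V:=\bw_\lambda V_\lambda/(\bw_\lambda V_\lambda)^\lambda$. However, there is a genuine slip in the direction of the surjection. From $K:=\ker(\bw_\lambda V_\lambda\twoheadrightarrow V)\subset (\bw_\lambda V_\lambda)^\lambda$ one concludes that $V=\bw_\lambda V_\lambda/K$ surjects onto $\tilde V=\bw_\lambda V_\lambda/(\bw_\lambda V_\lambda)^\lambda$, not the other way around. So you obtain $V\twoheadrightarrow\tilde V$, and your ``main obstacle'' (bounding $\dim\tilde V$ from above via Lemma~\ref{image} and exactness of $\bw_\lambda$) is aimed at the wrong inequality: that bound is automatic once you have the correct surjection.

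With the direction fixed, the finish is much shorter than what you propose and does not need Lemma~\ref{image}. Since $V$ has length two with both factors $V(\bpi)$, its only quotients are $0$, $V(\bpi)$, and $V$ itself. But $(\tilde V)_\lambda\cong(\bw_\lambda V_\lambda)_\lambda\cong V_\lambda$ has dimension $2$, whereas $\dim V(\bpi)_\lambda=1$; hence $\tilde V\cong V$. This is exactly the paper's argument. Note also that to get the initial surjection $W(\lambda)\twoheadrightarrow V$ you need $V$ to be cyclic; this is where the hypothesis that $V$ is a \emph{nontrivial} self-extension enters, via Lemma~\ref{nontriv} (a non-$\ell$-weight vector in $V_\lambda$ generates $V$). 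Your phrasing ``the top weight space generators of $V$ satisfy the defining relations'' glosses over this point.
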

 \begin{pf} If $V$ is a non--trivial self extension, then it follows from Lemma \ref{nontriv} that there exists $v\in V_\lambda$ such that $V=\hat{\bu}_qv$. Since $\wt V\subset\lambda-Q^+$ we see that $V$ is a quotient of $W(\lambda)$ and hence also of $\bw_\lambda V_\lambda$.  Since $$\dim V_\lambda=\dim(\bw_\lambda V_\lambda)_\lambda=2,$$ it follows that $\bw_\lambda V_\lambda/(\bw_\lambda V_\lambda)^\lambda$ must be a quotient of $V$ and hence is either isomorphic to $V$ or to $V(\bpi)$. But the latter is impossible since $\dim V(\bpi)_\lambda=1$ and the proof is complete.\end{pf}

The following is now immediate.
\begin{cor}\label{top} Suppose that $V$ and $V'$ are self--extensions of $V(\bpi)$ and $\wt\bpi=\lambda$. Then $$V\cong_{\hat{\cal F}} V'\iff\ \ V_\lambda\cong_{\ba_\lambda} V'_\lambda.$$
    \end{cor}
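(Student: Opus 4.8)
\textbf{Proof plan for Corollary~\ref{top}.} The plan is to deduce this statement directly from the two results immediately preceding it, namely Proposition~\ref{include} together with the structural identification $V\cong \bw_\lambda V_\lambda/(\bw_\lambda V_\lambda)^\lambda$ of the previous proposition. The forward implication is essentially formal: if $\eta\colon V\to V'$ is an isomorphism in $\hat{\cal F}$, then it is in particular a $\hat{\bu}_q$-module map, and since the $\ell$-weight space $V_\lambda$ is the top weight space it is preserved by any such map; restricting $\eta$ to $V_\lambda$ gives an isomorphism $V_\lambda\to V'_\lambda$ of $\hat{\bu}_q^0$-modules, hence of $\ba_\lambda$-modules (the action factors through $\ba_\lambda$ by the discussion in \secref{quotvlambda}, since $V$ and $V'$ are quotients of $W(\lambda)$).

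For the reverse implication, suppose first that $V$ is the trivial self-extension $V(\bpi)\oplus V(\bpi)$. By Lemma~\ref{nontriv}, $V$ is trivial iff $V_\lambda$ has two linearly independent highest-$\ell$-weight vectors, i.e. iff $V_\lambda$ is a direct sum of two copies of $\bc_\bpi$ as an $\ba_\lambda$-module; this is an intrinsic property of the $\ba_\lambda$-module $V_\lambda$, so $V_\lambda\cong_{\ba_\lambda} V'_\lambda$ forces $V'$ to be trivial as well, and then both are $V(\bpi)\oplus V(\bpi)$. If instead $V$ (hence also $V'$) is non-trivial, apply the previous proposition to both: $V\cong \bw_\lambda V_\lambda/(\bw_\lambda V_\lambda)^\lambda$ and $V'\cong \bw_\lambda V'_\lambda/(\bw_\lambda V'_\lambda)^\lambda$. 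An $\ba_\lambda$-module isomorphism $V_\lambda\xrightarrow{\sim} V'_\lambda$ induces, via the functor $\bw_\lambda$, an isomorphism $\bw_\lambda V_\lambda\xrightarrow{\sim}\bw_\lambda V'_\lambda$ in $\hat{\cal F}$; this isomorphism carries the canonical submodule $(\bw_\lambda V_\lambda)^\lambda$ onto $(\bw_\lambda V'_\lambda)^\lambda$, because $W^\lambda$ is characterized intrinsically as the largest submodule meeting the top weight space trivially and isomorphisms preserve such a characterization. Passing to the quotient yields $V\cong_{\hat{\cal F}} V'$.

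I do not anticipate a serious obstacle here; the statement is labelled a corollary precisely because the real work was done in establishing that a non-trivial self-extension is recovered from its top weight space via $\bw_\lambda$. The one point that deserves a sentence of care is checking that the $\bw_\lambda$ functor indeed sends the $\ba_\lambda$-isomorphism to a $\hat{\cal F}$-isomorphism respecting the $(-)^\lambda$ operation: this follows because $\bw_\lambda$ is a functor (so it sends isomorphisms to isomorphisms) and because the isomorphism induced on the top weight spaces is, up to the canonical identification $(\bw_\lambda M)_\lambda\cong M$, the original map — so it matches highest-$\ell$-weight vectors with highest-$\ell$-weight vectors and hence matches the maximal submodules avoiding the top weight space. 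I would also note explicitly that the trivial-versus-non-trivial dichotomy must be handled, since the proposition quoted is phrased only for non-trivial self-extensions.
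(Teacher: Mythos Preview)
Your proposal is correct and follows exactly the line the paper intends: the paper declares the corollary ``immediate'' from the preceding proposition that a non-trivial self-extension satisfies $V\cong \bw_\lambda V_\lambda/(\bw_\lambda V_\lambda)^\lambda$, and you have simply spelled out the functoriality argument and the trivial/non-trivial case split. Two cosmetic remarks: $V_\lambda$ is a weight space, not an $\ell$-weight space; and strictly speaking a \emph{trivial} self-extension is not a quotient of $W(\lambda)$, but the fact that the $\hat{\bu}_q^0$-action on $V_\lambda$ factors through $\ba_\lambda$ follows in all cases from the short exact sequence $0\to\bc_\bpi\to V_\lambda\to\bc_\bpi\to 0$ (also, Proposition~\ref{include} is named in your plan but never actually used).
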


\subsection{} The next proposition along with Lemma \ref{lindep} proves part (iii) of Theorem 1.
\begin{prop} Let $V_1$ and $V_2$ be nontrivial self extensions of $V(\bpi)$ for some $\bpi\in\cal P^+$. Then for all $\bpi_1\in\cal P^+$ with $V(\bpi)\otimes V(\bpi_1)$ irreducible, we have $$V_1\otimes V(\bpi_1)\cong V_2\otimes V(\bpi_1)\iff V_1 \cong V_2.$$\end{prop}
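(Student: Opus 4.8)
The statement is an "if and only if" but one direction ($V_1\cong V_2\implies V_1\otimes V(\bpi_1)\cong V_2\otimes V(\bpi_1)$) is trivial, so the content is the forward implication: assuming $V(\bpi)\otimes V(\bpi_1)$ is irreducible and $V_1\otimes V(\bpi_1)\cong V_2\otimes V(\bpi_1)$, we want $V_1\cong V_2$. The key tool is Corollary~\ref{top}, which says self-extensions of $V(\bpi)$ are classified by their top weight space as $\ba_\lambda$-modules, where $\lambda=\wt\bpi$. So the plan is: (1) reduce the question to comparing top weight spaces, (2) compute the top weight space of $V_j\otimes V(\bpi_1)$ in terms of that of $V_j$, and (3) show that the $\ba_{\lambda+\lambda_1}$-module $(V_j\otimes V(\bpi_1))_{\lambda+\lambda_1}$ determines the $\ba_\lambda$-module $(V_j)_\lambda$.

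\textbf{Carrying it out.} First I would invoke Lemma~\ref{nontriv2}: since $V(\bpi)\otimes V(\bpi_1)$ is irreducible, $V_j\otimes V(\bpi_1)$ is a (non-trivial, if $V_j$ is) self-extension of $V(\bpi)\otimes V(\bpi_1)=V(\bpi\bpi_1)$; set $\mu=\wt(\bpi\bpi_1)=\lambda+\lambda_1$. By Corollary~\ref{top} applied at weight $\mu$, we have $V_1\otimes V(\bpi_1)\cong V_2\otimes V(\bpi_1)$ iff $(V_1\otimes V(\bpi_1))_\mu\cong_{\ba_\mu}(V_2\otimes V(\bpi_1))_\mu$. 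The top weight space of a tensor product is one-dimensional in each factor: $(V_j\otimes V(\bpi_1))_\mu = (V_j)_\lambda\otimes_\bc V(\bpi_1)_{\lambda_1} = (V_j)_\lambda\otimes_\bc \bc\,v(\bpi_1)$, which is two-dimensional, matching $\dim(V_j)_\lambda=2$. Now I would use Proposition~4 (the formula $\Delta(h_{i,s})(v_1\otimes v_2)=h_{i,s}v_1\otimes v_2 + v_1\otimes h_{i,s}v_2$ for highest vectors): since $v(\bpi_1)$ is a highest-$\ell$-weight vector, the action of $h_{i,r}$ on $(V_j\otimes V(\bpi_1))_\mu$ is $h_{i,r}\otimes 1 + \mathrm{id}\otimes h_{i,r}(\bpi_1)$, i.e. it is the action of $h_{i,r}$ on $(V_j)_\lambda$ shifted by the scalar $h_{i,r}(\bpi_1)$. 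Equivalently, in terms of the generators $\Lambda_{i,r}$ (using \eqref{deflambda}, \eqref{hs}), $(V_j\otimes V(\bpi_1))_\mu$ is obtained from $(V_j)_\lambda$ by the algebra automorphism of $\hat{\bu}_q^0$ that implements tensoring the $\ell$-weight by $\bpi_1$ — this is an isomorphism of the underlying $\bc$-algebras descending to $\ba_\lambda \cong \ba_\mu$ appropriately. Hence $(V_1)_\lambda\cong_{\ba_\lambda}(V_2)_\lambda$ follows from $(V_1\otimes V(\bpi_1))_\mu\cong_{\ba_\mu}(V_2\otimes V(\bpi_1))_\mu$, and Corollary~\ref{top} (now at weight $\lambda$) gives $V_1\cong V_2$.

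\textbf{The main obstacle.} The delicate point is making precise the claim that "twisting by $\bpi_1$" gives a well-defined isomorphism between the relevant quotient rings, so that isomorphism of top weight spaces of the tensor products is genuinely equivalent to isomorphism of the top weight spaces of the $V_j$. Concretely one must check that the shift $h_{i,r}\mapsto h_{i,r}+h_{i,r}(\bpi_1)\cdot 1$ maps $\mathrm{Ann}_\lambda$ into (a suitable translate inside) $\mathrm{Ann}_\mu$ and is compatible with the defining relations \eqref{defrelalambda}; since both $V_j$ and $V_j\otimes V(\bpi_1)$ are self-extensions with two-dimensional top spaces annihilated by the respective augmentation-type ideals, and an isomorphism of $\ba_\mu$-modules is in particular an isomorphism of $\bc$-vector spaces intertwining the (shifted) $\Lambda_{i,r}$-actions, the translation is clearly invertible, and one simply transports the module structure back. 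One should also note that the case where $V_1$ or $V_2$ is a trivial (split) self-extension is handled separately and immediately by Lemma~\ref{nontriv2} together with Corollary~\ref{top}. I expect the write-up to be short once the bookkeeping with the $\ba_\lambda$-structures is set up carefully.
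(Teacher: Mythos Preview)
Your approach is correct and essentially the same as the paper's: both reduce the question to the top weight spaces via Corollary~\ref{top} and use the coproduct formula $\Delta(h_{i,s})(v_1\otimes v_2)=h_{i,s}v_1\otimes v_2+v_1\otimes h_{i,s}v_2$ on highest vectors to see that tensoring with $V(\bpi_1)$ shifts the $h_{i,r}$-action on the top space by the scalar $h_{i,r}(\bpi_1)$, whence an $\hat\bu_q^0$-intertwiner for the shifted actions is automatically one for the unshifted actions. The ``obstacle'' you flag is therefore a non-issue---you never need a ring map $\ba_\lambda\to\ba_\mu$, only the trivial observation that adding a scalar endomorphism does not change the isomorphism class of a module; the paper makes exactly this computation concretely by writing $h_{i,r}v_j=h_{i,r}(\bpi)v_j+c^j_{i,r}\iota_j(v(\bpi))$ and reading off $c^1_{i,r}=c^2_{i,r}$.
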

\begin{pf}
 Let $\eta:V_1\otimes V(\bpi_1)\to V_2\otimes V(\bpi_1)$ be an isomorphism of $\hat{\bu}_q$--modules. By Lemma \ref{nontriv2} we know that $V_j\otimes V(\bpi_1)$ is a nontrivial extension of $V(\bpi)\otimes V(\bpi_1)$. Let $\iota_j: V(\bpi)\to V_j$ be the inclusion. Since $\iota_1(v(\bpi))\otimes v(\bpi_1)$ is a highest-$\ell$-weight vector in $V_1\otimes V(\bpi_1)$, we may assume without loss of generality that$$\eta(\iota_1(v(\bpi))\otimes v(\bpi_1))=\iota_2(v(\bpi))\otimes v(\bpi_1). $$
 Let $v_1\in (V_1)_{\wt\bpi}$ be linearly independent from $\iota_1(v(\bpi))$. Writing
  \begin{equation}\label{eta}\eta(v_1\otimes v(\bpi_1))=v_2\otimes v(\bpi_1), \end{equation} we see that $v_2$ and $\iota_2(v(\bpi_2))$ are linearly independent elements of $(V_2)_{\wt\bpi}$. Applying $h_{i,r}$ to both side of \eqref{eta}, we get $$\eta(h_{i,r}v_1\otimes v(\bpi_1))= h_{i,r}v_2\otimes v(\bpi_1),$$ for all $i\in I$, $r\in\bz^\times$.  Writing $$h_{i,r}v_j= h_{i,r}(\bpi)v_j+ c^j_{i,r}\iota_j(v(\bpi)), $$ we find now that $c_{i,r}^1=c_{i,r}^2$ for all $i\in I$, $r\in\bz^\times$.  Hence the map $v_1\to v_2$, $\iota_1(v(\bpi))\to\iota_2(v(\bpi))$ defines an isomorphism $$(V_1)_{\wt\bpi}\cong (V_2)_{\wt\bpi}$$ of $\ba_{\wt\bpi}$--modules. By Corollary \ref{top} we see that this implies $V_1\cong V_2$ as $\hat{\bu}_q$--modules. The converse statement is trivial and the proof is complete.
\end{pf}

\section{Proof of Theorem 1 ${\rm(iv)}$.}\label{s:sl2}
Throughout this section we shall be concerned with $R$ being of type $A_1$. In this case, $I=\{1\}$ and for ease of notation, we denote the elements $x_{i, r}^\pm$ as just $x_r^\pm$. Since we will not be using the Chevalley generators in this section there should be no confusion. We also remark that $d_1=1$ and hence we just denote by $[r]$ the quantum number $[r]_1$. Finally, we identify $P$ with $\bz$ and $Q$ with $2\bz$ and denote the modules $W(\lambda)$ by $W(n)$ etc.

\subsection{}\label{actsl2} Given $m\in\bz_+$ and $a\in\bc^\times$, set $$\bpi(m,a)=(1-aq^{m-1}u)(1-aq^{m-3}u)\cdots (1-aq^{-m+1}u).$$ The representation $V(\bpi(m,a))$ has the following explicit realization. It has a basis $v_m,\cdots, v_{0}$ and the action of the generators $x_r^\pm$ is given by,\\
\begin{equation}\label{e:xr-toprest}
x_{r}^+v_j =\left(aq^{-m+2j+2}\right)^r[j+1]v_{j+1} \qquad  x_{r}^-v_j = \left(aq^{-m+2j}\right)^r[m-j+1]v_{j-1},
\end{equation}
where $0\le j\le m$ and we understand that $v_{-1}=v_{m+1}=0$.
 The action of the remaining generators is determined by these and we note for future use that for all $r\in\bz$ with $r\ne 0$, we have
\begin{gather}\label{e:ev(phi_r)}
\phi_{\pm r}^\pm v_m=\pm(q-q^{-1})(aq^m)^{\pm r}[m]v_m,\qquad  h_{r}v_m= a^r\frac{[rm]}{r}v_m.
\end{gather}
Clearly $V(\bpi(m,a))$ is irreducible for the subalgebra $\bu_q$ and hence by Corollary \ref{class} is a prime object of $\hat{\cal F}$.
The following is a consequence of \cite[Theorem 4.8]{CPqa}.
\begin{prop} \label{primeclass} Any prime simple object in $\hat{\cal F}$ is isomorphic to $V(\bpi(m,a))$ for some $m\in\bz_+,a\in\bc^\times$. Moreover, for all $s\in\bz_+$, we have  $$V(\bpi(m,a)^s)\cong V(\bpi(m,a))^{\otimes s}.$$\hfill\qedsymbol
\end{prop}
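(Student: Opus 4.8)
The plan is to read both statements off the classification of tensor products of simple modules in the $\lie{sl}_2$ case, i.e.\ off \cite[Theorem 4.8]{CPqa}, so the first step is to recall that theorem in usable form. For $m\in\bz_+$, $a\in\bc^\times$ call $S(m,a)=\{aq^{m-1},aq^{m-3},\dots,aq^{-m+1}\}$ the associated $q$-string, so that $\bpi(m,a)$ is exactly the element of $\cal P^+$ whose linear factors are indexed by $S(m,a)$; two $q$-strings are in \emph{general position} if their union is not a $q$-string or one contains the other, and in \emph{special position} otherwise. The facts I will use are: (a) every $\bpi\in\cal P^+$ is, uniquely up to reordering, a product $\bpi=\prod_{j=1}^k\bpi(m_j,a_j)$ with the $S(m_j,a_j)$ pairwise in general position; (b) for such a factorization $V(\bpi)\cong V(\bpi(m_1,a_1))\otimes\cdots\otimes V(\bpi(m_k,a_k))$; and (c) more generally $V(\bpi(m_1,a_1))\otimes\cdots\otimes V(\bpi(m_k,a_k))$ is irreducible whenever the strings are \emph{pairwise} in general position.

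For the first assertion, one direction is already recorded above: $V(\bpi(m,a))$ restricts to an irreducible $\bu_q$-module, hence is prime by \corref{class}. For the converse I would argue that $V(\bpi)$ is prime iff $\bpi$ admits no factorization $\bpi=\bpi_1\bpi_2$ into nontrivial elements with $V(\bpi_1)\otimes V(\bpi_2)$ irreducible: if $V(\bpi)\cong V_1\otimes V_2$ with $V_1,V_2$ nontrivial then each $V_j$ must be simple (tensoring a proper nonzero submodule of one factor with the other produces a proper nonzero submodule), so $V_j=V(\bpi_j)$, and \corref{prod} forces $\bpi=\bpi_1\bpi_2$ with $V(\bpi_1)\otimes V(\bpi_2)\cong V(\bpi)$ irreducible; conversely any such factorization exhibits $V(\bpi)$ as a nontrivial tensor product. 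Now if $\bpi$ is not of the form $\bpi(m,a)$, its canonical factorization (a) has $k\ge 2$; grouping off one factor and using (c) together with \corref{prod} to identify the tensor product of the remaining $k-1$ factors with a single $V(\bpi')$, we get such a factorization of $\bpi$, so $V(\bpi)$ is not prime. Hence prime forces $\bpi=\bpi(m,a)$. (Consistently, a single $q$-string has no nontrivial general-position factorization at all, since a partition of $S(m,a)$ into two nonempty sub-$q$-strings is a partition into complementary pieces, which are in special position.)

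For the second assertion, write $\bpi(m,a)^s=\prod_{j=1}^s\bpi(m,a)$. Any two of the $s$ factors carry the identical $q$-string $S(m,a)$, which is (trivially) contained in itself, so the factors are pairwise in general position and (c) gives that $V(\bpi(m,a))^{\otimes s}$ is irreducible. On the other hand, by \corref{prod} the submodule generated by $v(\bpi(m,a))^{\otimes s}$ is a highest-$\ell$-weight module of $\ell$-weight $\bpi(m,a)^s$ with unique irreducible quotient $V(\bpi(m,a)^s)$; since the tensor product is already irreducible it must equal this quotient, giving $V(\bpi(m,a)^s)\cong V(\bpi(m,a))^{\otimes s}$.

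There is no real obstacle here; the proposition is essentially a reformulation of \cite[Theorem 4.8]{CPqa}, and the only things demanding care are at the bookkeeping level: getting the ``general position'' condition the right way round so that (i) a single $q$-string admits no nontrivial general-position factorization, which is what isolates the primes, while (ii) two \emph{equal} $q$-strings still count as being in general position, which is exactly what makes every tensor power of $V(\bpi(m,a))$ irreducible; and checking that the canonical factorization (a) is trivial precisely when $\bpi=\bpi(m,a)$.
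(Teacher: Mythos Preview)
Your argument is correct and is precisely the unpacking of \cite[Theorem~4.8]{CPqa} that the paper invokes without detail; the paper's own proof is simply the citation. The parenthetical consistency check at the end only treats factorizations of $\bpi(m,a)$ into two sub-$q$-strings rather than arbitrary $\bpi_1\bpi_2$, but this is inessential since primality of $V(\bpi(m,a))$ is already established independently via \corref{class}.
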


\subsection{}  The next proposition together with part (iii) of Theorem 1 and  Proposition \ref{primeclass}  establishes part (iv) of  Theorem 1.
\begin{prop}\label{reduction}  Let $m\in\bz_+$, $a\in\bc^\times$. Then,
\begin{enumerit}\item[(i)] $\dim\Ext^1_{\hat{\cal F}}(V(\bpi(m,a)),V(\bpi(m,a)))=1,$ \item[(ii)] $\dim\Ext^1_{\hat{\cal F}}(V(\bpi(m,a))^{\otimes 2},V(\bpi(m,a))^{\otimes 2})\ge 2.$
\end{enumerit}
\end{prop}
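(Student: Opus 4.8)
The plan is to prove (i) using the homological machinery of Sections \ref{s:weyl}–\ref{s:tp}, and to prove (ii) by constructing two visibly non-isomorphic nontrivial self-extensions of $V(\bpi(m,a))^{\otimes 2}$.

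\textbf{Part (i).} By Corollary \ref{top}, a nontrivial self-extension $V$ of $V(\bpi(m,a))$ is determined up to isomorphism by the $\ba_\lambda$-module structure on $V_\lambda$, where $\lambda=\wt\bpi(m,a)=m$ (identifying $P$ with $\bz$); here $\ba_\lambda\cong\bc[\bar\Lambda_1,\dots,\bar\Lambda_m,\bar\Lambda_m^{-1}]$. By Proposition \ref{include}, $\Ext^1_{\hat{\cal F}}(V(\bpi),V(\bpi))$ embeds in $\Ext^1_{\ba_\lambda}(\bc_\bpi,\bc_\bpi)$, which is $m$-dimensional by Proposition \ref{dimextweyl}. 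Since Proposition \ref{thm1i} already gives dimension $\ge 1$, it suffices to show the image of the embedding is exactly one-dimensional. To pin down the image I would use Lemma \ref{image}: a self-extension of the local Weyl module $W(\bpi)$ descends to a self-extension of $V(\bpi)$ precisely when $\tau(W^{\wt\bpi})=W(\bpi)^{\wt\bpi}$. For $A_1$ the relevant constraint is concrete: $W(\bpi)$ has dimension $m+1$ and $W(\bpi)^{\wt\bpi}$ is the kernel of $W(\bpi)\to V(\bpi)$; the key computation is to determine which $\bc[\bar\Lambda_\bullet]/(\mathfrak m^2)$-module structures on the top weight space are compatible with the lower relations defining $V(\bpi)$, namely $(x_0^-)^{m+1}=0$ on the generator and integrability. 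Equivalently, one traces through exactly when the action of $\phi^\pm_{\pm r}$ (given on the top weight vector by \eqref{e:ev(phi_r)}) on a two-dimensional Jordan block is forced to factor through a single Jordan block shared by all $r$, and the upshot is that all the $\bar\Lambda_r$ must act by the \emph{same} nilpotent — which cuts the $m$-dimensional space down to a line. The cleanest way to see this is to use the explicit basis \eqref{e:xr-toprest}: write $v'=v_m' + (\text{stuff in lower weights})$ for a putative second top vector in a self-extension, impose $x_r^+ v'=0$ and the finite-dimensionality/integrability relation $(x_0^-)^{m+1}v'\in\bc$-span correctly, and read off that the self-extension parameter is one-dimensional.

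\textbf{Part (ii).} Here the plan is to exhibit two non-isomorphic nontrivial self-extensions of $V(\bpi(m,a))^{\otimes 2}\cong V(\bpi(m,a)^2)$. The first is $\be(V(\bpi(m,a)^2))$ from Section \ref{s:E(V)}. A second one is obtained from $\be(V(\bpi(m,a)))\otimes V(\bpi(m,a))$, which is a nontrivial self-extension of $V(\bpi(m,a))\otimes V(\bpi(m,a))$ by Lemma \ref{nontriv2}. To show these two self-extensions are not isomorphic, I would compare the $\ba_\lambda$-module structures on their top weight spaces (invoking Corollary \ref{top}): for $\be(V(\bpi(m,a)^2))$ the element $h_r$ acts on the $2$-dimensional top space as $h_r(\bpi(m,a)^2)\,\id + r\cdot N$ with $N$ the standard nilpotent, i.e. with the ``derivation'' term scaled by $r$ uniformly; for $\be(V(\bpi(m,a)))\otimes V(\bpi(m,a))$, Proposition \ref{prod} shows $h_r$ acts as $h_r(\bpi(m,a)^2)\,\id + r\cdot N'$ but now with the nilpotent coming only from the first tensor factor, whose eigenvalue contribution is $h_r(\bpi(m,a))=\tfrac12 h_r(\bpi(m,a)^2)$. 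These give genuinely different $\ba_\lambda$-module structures on $V_\lambda$ (one cannot rescale to match, since the identity part is the same while the nilpotent parts differ by a factor $2$ that is $r$-independent), so by Corollary \ref{top} the extensions are non-isomorphic; together with Lemma \ref{lindep} this forces $\dim\Ext^1\ge 2$. Alternatively, one can observe this is precisely the $\bpi_1=\bpi_2=\bpi(m,a)$ instance that is \emph{excluded} from Proposition \ref{bevtensor}, and the point is that in that degenerate case the two extensions, while built from the same recipe, still differ — so the argument is a direct hands-on variant of the proof of Proposition \ref{bevtensor}.

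\textbf{Main obstacle.} The delicate step is part (i): showing the image of $\Ext^1_{\hat{\cal F}}(V(\bpi),V(\bpi))$ in the $m$-dimensional space $\Ext^1_{\ba_\lambda}(\bc_\bpi,\bc_\bpi)$ is exactly one-dimensional. The inequality $\ge 1$ is free, but the upper bound $\le 1$ requires genuinely using the $A_1$ structure — the relation $(x_0^-)^{m+1}w(\bpi)=0$ together with the explicit action \eqref{e:xr-toprest}–\eqref{e:ev(phi_r)} — to see that a nonsplit self-extension forces all the loop generators' ``infinitesimal'' actions on the top $2$-space to be proportional. I expect the heart of the computation to be verifying that the only $1$-parameter family of $\ba_\lambda/\mathfrak m^2$-structures on $\bc_\bpi\oplus\bc_\bpi$ that extends to a genuine $\hat{\bu}_q$-module quotient of $\bw_\lambda(\ba_\lambda/\mathfrak m^2)$ of the correct dimension is the one coming from $\be(V(\bpi))$; this is where the special collapse of the Drinfeld relations for $\lie{sl}_2$ (each $h_r$ eigenvalue being a power sum in a single root $a$) does the work and has no analogue in higher rank.
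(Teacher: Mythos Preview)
Your plan for part (ii) has a genuine gap: the two self-extensions you propose, $\be(V(\bpi(m,a)^2))$ and $\be(V(\bpi(m,a)))\otimes V(\bpi(m,a))$, are in fact isomorphic. Write $\bpi=\bpi(m,a)$. On the top weight space, with basis the obvious pair of vectors, $h_r$ acts in $\be(V(\bpi^2))$ by the matrix $h_r(\bpi^2)\left(\begin{smallmatrix}1&0\\r&1\end{smallmatrix}\right)$, while in $\be(V(\bpi))\otimes V(\bpi)$ (using Proposition~\ref{prod}) it acts by $\left(\begin{smallmatrix}h_r(\bpi^2)&0\\ r\,h_r(\bpi)&h_r(\bpi^2)\end{smallmatrix}\right)$. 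Since $h_r(\bpi^2)=2h_r(\bpi)$, the nilpotent parts differ by the $r$-\emph{independent} scalar $2$, and conjugation by $\operatorname{diag}(1,2)$ identifies the two $\ba_\lambda$-module structures. By Corollary~\ref{top} the extensions are isomorphic, and by Lemma~\ref{lindep} they give proportional classes in $\Ext^1$. (This is exactly why Proposition~\ref{bevtensor} excludes the case $\bpi_1^{r_1}=\bpi_2^{r_2}$: when $\bpi_1=\bpi_2$ the ``two'' extensions it produces collapse to one.) The paper instead constructs by hand a codimension-two ideal $\bi\subset\ba_{2m}$, shows $\bw_{2m}(\ba_{2m}/\bi)$ satisfies the hypothesis of Lemma~\ref{image} via an explicit $\phi^\pm$-computation, and then checks directly that the resulting self-extension of $V(\bpi^2)$ is not isomorphic to $\be(V(\bpi^2))$; a second, genuinely new, extension really is needed.

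For part (i) your strategy is plausible but not carried through, and the paper's route is rather different and cleaner. Instead of going via Lemma~\ref{image} and the local Weyl module, the paper first proves (Proposition~\ref{genrel}(i)) that $V(\bpi(m,a))$ is the quotient of $W(\bpi(m,a))$ by the \emph{single} relation $(x_1^--aq^mx_0^-)w_m=0$, and then shows that the kernel of the map from the \emph{global} Weyl module $\eta:W(m)\to V(\bpi(m,a))$ is already generated as a $\hat{\bu}_q$-module by the one element $(x_1^--aq^mx_0^-)w_m$; applying $x_r^+$ to this element recovers all the $(\phi_r^\pm-\phi_r^\pm(\bpi))w_m$. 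Then Proposition~\ref{zeroext} gives $\Ext^1_{\hat{\cal F}}(V(\bpi),V(\bpi))\cong\Hom_{\hat{\cal F}}(\ker\eta,V(\bpi))$, and cyclicity of $\ker\eta$ bounds this by $1$. Your proposed mechanism (imposing integrability and $(x_0^-)^{m+1}=0$ on a two-dimensional top space) does not obviously force the nilpotent parts of all the $\bar\Lambda_r$ to be proportional; the paper's single-generator argument is what makes this precise.
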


The rest of this section is devoted to proving the proposition.

\subsection{} Recall from Section 4 that the local Weyl module $W(\bpi(m,a))$ is the module generated by an element $w_m$ with relations: $$x_r^+ w_m=0,\ \  \ \ h_r w_m=a^r\frac{[rm]}{r}w_m\ \ ,\ \ (x_0^-)^{m+1}w_m=0.$$
\begin{prop} \label{genrel} Let $m\in\bz_+$, $a\in\bc^\times$.
\begin{enumerit}
 \item[(i)] The module $V(\bpi(m,a))$ is the quotient of $W(\bpi(m,a))$ obtained by imposing the single additional relation $$(x_1^--aq^mx_0^-)w_m=0.$$
     \item[(ii)] The module $V(\bpi(m,a))^{\otimes 2}$ is the quotient of $W(\bpi(m,a)^2)$ obtained by imposing the single additional relation $$(x^-_2-2aq^mx^-_1+a^2q^{2m}x^-_0)w_{2m}=0.$$\end{enumerit}\end{prop}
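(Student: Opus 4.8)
The plan is to verify both statements by first establishing (i) and then deducing (ii) essentially by the same mechanism applied to the tensor square, using the explicit realization \eqref{e:xr-toprest}–\eqref{e:ev(phi_r)} of $V(\bpi(m,a))$ together with the description of $W(\bpi(m,a))$ as a local Weyl module. For (i), let $\bar W$ denote the quotient of $W(\bpi(m,a))$ by the submodule generated by $(x_1^- - aq^m x_0^-)w_m$. Since $V(\bpi(m,a))$ is a quotient of $W(\bpi(m,a))$ and, by \eqref{e:xr-toprest} with $j=m$, the relation $x_1^- v_m = aq^m x_0^- v_m$ holds in $V(\bpi(m,a))$ (both sides equal $aq^m[1]v_{m-1}$), the module $V(\bpi(m,a))$ is a quotient of $\bar W$. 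It therefore suffices to show $\dim \bar W \le m+1$, since $\dim V(\bpi(m,a)) = m+1$. First I would show that in $\bar W$ the whole family of relations $x_r^- w_m = (aq^m)\, x_{r-1}^- w_m$ holds for all $r \in \bz$; this follows by applying $h_{1,s}$ (or rather the $h_s$ in the $A_1$ notation) to $(x_1^- - aq^m x_0^-)w_m = 0$ and using the Drinfeld relation $[h_s, x^-_{r}] = -\tfrac1s[sa_{1,1}]\, x^-_{r+s} = -\tfrac1s[2s]\,x^-_{r+s}$ together with $h_s w_m = a^s\tfrac{[sm]}{s}w_m$, to get a recursion pinning down $x^-_{r+s}w_m$ in terms of $x^-_r w_m$. (One must check the coefficient $[2s] \ne 0$, which holds since $q$ is not a root of unity.) Consequently $\hat\bu_q^- w_m = \bu_q^- w_m$ inside $\bar W$, where $\bu_q^-$ is the classical negative nilpotent subalgebra; combined with $(x_0^-)^{m+1}w_m = 0$ this gives $\bar W = \bu_q w_m$ spanned by $w_m, x_0^- w_m, \dots, (x_0^-)^m w_m$, so $\dim \bar W \le m+1$ and (i) follows.

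For (ii), write $w_{2m} = w_m \otimes w_m \in W(\bpi(m,a))^{\otimes 2}$, noting by Proposition \ref{primeclass} and Corollary \ref{prod} that $W(\bpi(m,a)^2) \cong W(\bpi(m,a))^{\otimes 2}$ maps onto $V(\bpi(m,a))^{\otimes 2}$ via $w_{2m} \mapsto v_m \otimes v_m$. Using the comultiplication $\Delta(x_r^-) = x_r^- \otimes k^{-1} + 1 \otimes x_r^-$ (up to the standard corrections — here I would use the Drinfeld-coproduct-type formula valid on highest-$\ell$-weight vectors, as in the formulae of \cite{Da}, \cite{BCP}) and the relation $x_r^- v_m = aq^m x_{r-1}^- v_m$ in each factor, a direct computation shows that $(x_2^- - 2aq^m x_1^- + a^2 q^{2m} x_0^-)(v_m \otimes v_m) = 0$: expanding $\Delta(x_2^-), \Delta(x_1^-), \Delta(x_0^-)$ on $v_m\otimes v_m$, each term is a multiple of $x_1^-v_m \otimes (\cdot) $ or $(\cdot)\otimes x_1^- v_m$ or $x_0^-v_m\otimes(\cdot)$ etc., and the coefficients $1, -2, 1$ are exactly what is needed to cancel (this is the binomial-coefficient pattern $\binom{2}{j}(-aq^m)^{2-j}$). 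Hence $V(\bpi(m,a))^{\otimes 2}$ is a quotient of the module $\bar W_2 := W(\bpi(m,a)^2)/\langle (x_2^- - 2aq^m x_1^- + a^2q^{2m}x_0^-)w_{2m}\rangle$. To finish, I would show $\dim \bar W_2 \le \dim V(\bpi(m,a))^{\otimes 2} = (m+1)^2$. As in part (i), apply the $h_s$ to the defining relation; using $h_s w_{2m} = a^s \tfrac{[2sm]}{s} w_{2m}$ and the commutator relation, one gets a second-order recursion among the $x_r^- w_{2m}$ (with characteristic "roots" both equal to $aq^m$, matching the repeated factor $(t - aq^m)^2$), which expresses every $x_r^- w_{2m}$ as a $\bc$-linear combination of $x_0^- w_{2m}$ and $x_1^- w_{2m}$. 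Then the weight space $(\bar W_2)_{2m - 2\alpha_1/\ldots}$ — more precisely, iterating — shows $\hat\bu_q^- w_{2m}$ has the same dimension as the corresponding space in $V(\bpi(m,a))^{\otimes 2}$, giving the bound.

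The main obstacle, and the step I would be most careful about, is the computation in the tensor square: getting the coproduct formulae right on $\ell$-highest-weight vectors (the naive Hopf coproduct for $x_r^-$ with $r\ne 0$ is not simply $x_r^-\otimes k^{-1} + 1\otimes x_r^-$, and one needs the corrected Drinfeld-type formula, citing \cite{Da}, \cite{BCP} as in Proposition 3.5 of the excerpt), and then verifying that the coefficients $1, -2, 1$ really do produce the claimed cancellation and, crucially, that the resulting recursion on $x_r^- w_{2m}$ is \emph{exactly} second order with both roots $aq^m$ so that the dimension count closes at $(m+1)^2$ rather than something larger. A secondary technical point is checking non-vanishing of the relevant quantum integers $[2s]$ and $[sm]$-type coefficients so that the recursions are genuinely determined; this uses only that $q$ is not a root of unity. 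Once the dimension bounds $\dim \bar W \le m+1$ and $\dim \bar W_2 \le (m+1)^2$ are in hand, both halves of the proposition follow immediately since the reverse inequalities come for free from $V(\bpi(m,a))^{(\otimes 2)}$ being a quotient.
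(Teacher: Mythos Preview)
Your overall strategy matches the paper's, but there are two genuine gaps, one in each part.

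In part (i), from $x_r^-\,\bar w_m\in\bc\,x_0^-\bar w_m$ you cannot conclude ``consequently $\hat{\bu}_q^-\bar w_m=\bu_q^-\bar w_m$''. The relation you derive only controls the \emph{first} lowering operator applied to $\bar w_m$; it says nothing about $x_r^-(x_0^-)^{k-1}\bar w_m$ for $k\ge 2$. The paper closes this by an induction on $|r|$ using the quadratic relation
\[
x_s^-x_0^- - q^{-2}x_0^-x_s^- = q^{-2}x_1^-x_{s-1}^- - x_{s-1}^-x_1^-
\]
in $\hat{\bu}_q^-$, together with the base cases $x_0^-x_{\pm 1}^-=q^{\pm 2}x_{\pm 1}^-x_0^-$, to show that each $x_r^-(x_0^-)^{k-1}\bar w_m$ is a multiple of $(x_0^-)^k\bar w_m$. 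This step is not automatic and needs to be written out.

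In part (ii) the gap is more serious. Even after supplying the analogue of the induction above, the second-order recursion together with the quadratic relations only produces the spanning set
\[
\{(x_1^-)^s(x_0^-)^\ell\,\bar w_{2m}\ :\ 0\le s+\ell\le 2m\},
\]
which has $(2m+1)(m+1)$ elements, strictly larger than $(m+1)^2$ for every $m\ge 1$. Your phrase ``iterating --- shows $\hat{\bu}_q^-w_{2m}$ has the same dimension'' does not address this overcount, and no amount of further recursion on the indices of the $x_r^-$ will: the relation you impose is genuinely second order and cannot collapse the two-parameter family $(x_1^-)^s(x_0^-)^\ell$ any further from the top. The paper's key extra input is \emph{integrability}: $\bar W_2$ is a quotient of the local Weyl module, hence a finite-dimensional $\bu_q$-module, so $\dim(\bar W_2)_{2r}=\dim(\bar W_2)_{-2r}$ for all $r$. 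The spanning set above gives $\dim(\bar W_2)_{2r}\le m-r+1$ for $0\le r\le m$, and the Weyl-group symmetry then forces
\[
\dim\bar W_2\ \le\ 2\sum_{r=1}^m(m-r+1)+(m+1)=(m+1)^2,
\]
which is exactly the bound needed. Without this symmetry argument your dimension count does not close.
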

         \begin{pf} To prove (i), notice that the formulae given in Section \ref{actsl2} imply that  the element $v_m\in V(\bpi(m,a))$ satisfies, $$(x_1^--aq^mx_0^-)v_m=0,$$  In particular, if $$W =W(\bpi(m,a))/\hat{\bu}_q(x_1^--aq^mx_0^-)w_m,$$ then $V(\bpi(m,a))$ is a quotient of $W$. Part (i) follows if we prove that $\dim W\le m+1$. For this we denote by $\bar w_m$ the image of $w_m$ in $W$ and  observe that $W$ is spanned by  $\bar w_m$ and elements of the form $x_{s_1}^-\cdots x^-_{s_k}\bar w_m$ where $1\le k\le m$. Since
$$0=
[h_r, x_1^--aq^mx_0^-]\bar w_m = \frac{[2r]}{r}(x^-_{r+1}-aq^m x^-_r)\bar w_m,$$  we get that
 $x_r^-\bar w_m \in \bc x_0^- \bar w_m$
for all $r\in\bz$.  This implies that $W$ is spanned by elements of the form $x_{s_1}^-\cdots x_{s_{k-1}}^- x^-_{0}\bar w_m$ where $1\le k\le m$.  Suppose that we have proved that we may take $s_2=\cdots=s_k=0$. If $s_1=\pm 1$, then using the relation $$x_0^-x_{\pm 1}^-=q^{\pm 2}x_{\pm 1}^-x_0^-$$ shows that $x_{\pm 1}^-(x_0^-)^{k-1}\bar w_m$ is a multiple of $(x_0^-)^{k}\bar w_m$. An obvious induction on $s$ using the relation $$x_s^-x_0^--q^{-2}x_0^-x_s^-= q^{-2}x_1^-x_{s-1}^-- x_{s-1}^-x_1^-,$$ now proves that $W$ is spanned by elements of the form $(x_0^-)^s\bar w_m$, $1\le s\le m$ and hence $\dim W\le m+1$ as required.

         The proof of part (ii) is very similar. We  observe that we have the relation $$(x^-_2-2aq^mx^-_1+a^22q^{2m}x^-_0)(v_m\otimes v_m)=0,$$ in $V(\bpi(m,a))^{\otimes 2}$. We set $$W= W(\bpi(m,a)^2)/\hat{\bu}_q(x^-_2-2aq^mx^-_1+a^22q^{2m}x^-_0) w_{2m},$$  and let $\bar w_{2m}$ be the image of $w_{2m}$ in $W$. We now prove  exactly as before that $W$ is spanned by $\bar w_{2m}$ and  elements of the form $(x_1^-)^s(x_0^-)^\ell\bar w_m$ with $1\le s+\ell\le 2m$. The spanning set is now of cardinality bigger than $(m+1)^2$ if $m>1$. To show that in fact we can choose a suitable subset of cardinality at most $(m+1)^2$ we observe that $$\dim W_{2r}=\dim W_{-2r},$$ and hence it is enough to determine a bound for $\dim W_{2r}$ for $0\le r\le m$. This bound is easily seen to be $m-r+1$ and so we now have $$\dim W= 2\dim W_{2m} +2\dim W_{2m-2}+\cdots +2\dim W_2 + \dim W_0= 2(1+2\cdots+ m)+ (m+1) =(m+1)^2.$$ This completes the proof of the Proposition.
         \end{pf}

         \subsection{} We now prove Proposition \ref{reduction}(i). Consider the canonical map from the global Weyl module $\eta: W(m)\to V(\bpi(m,a))$ which sends $w_m\to v_m$. We claim that $\ker\eta$ is generated by the element $v= (x_1^--aq^mx_0^-)w_m$.  By Proposition \ref{genrel}(i) we see that $v\in\ker\eta$. Moreover,
$$(q-q^{-1}) x_r^+ v= \begin{cases}(\phi^\pm_{r+1}-aq^m\phi^\pm_{r})w_m,\ \ r\in\bz, r\ne 0,-1,\\ (\phi^+_1-aq^m(q^m-q^{-m}))w_m,\ \ r=0,\\
        ( (q^m-q^{-m})-aq^m\phi_{-1}^-)w_m,\ \ r=-1.
         \end{cases}  $$
An induction on $r$  shows that  $$(\phi^\pm_r-(aq^m)^r(q^m-q^{-m}))w_m\in\hat{\bu}_qv,\ \ r\ne 0. $$  Setting $$\tilde W=W(m)/\hat{\bu}_qv,$$ we see that the defining relations of $W(\bpi(m,a))$ imply that $\tilde W$ is a quotient of $W(\bpi(m,a))$. It now follows from Proposition \ref{genrel}(i) that $$\tilde W\cong V(\bpi(m,a)),$$ and the claim is established.

By Proposition \ref{zeroext}, $\Ext^1_{\hat{\cal F}}(W(m),V(\bpi(m,a)))=0$. Thus,
applying $\Hom_{\hat{\cal F}}(-,V(\bpi(m,a)))$ to the short exact sequence $$ 0\to \ker \eta \to W(m) \to V(\bpi(m,a)) \to 0,$$ and noting also that $\Hom_{\hat{\cal F}}(W(m),V(\bpi(m,a)))\cong \Hom_{\hat{\cal F}}(V(\bpi(m,a)),V(\bpi(m,a)))$, one finds
$$\dim\Ext^1_{\hat{\cal F}}(V(\bpi(m,a)), V(\bpi(m,a)))=\dim\Hom_{\hat{\cal F}}(\ker\eta, V(\bpi(m,a)))\le 1.$$ By Proposition \ref{thm1i} we know $\be(V(\bpi(m,a))$ is a non--trivial self extension and hence part (i) of Proposition \ref{reduction} is proved.

         \subsection{} The proof of part (ii) proceeds as follows. We construct an ideal of $\ba_{2m}$ of codimension two and show that it can be used to define a non--trivial self--extension of $W(\bpi)$. We then show that this self--extension satisfies the conditions of Lemma  \ref{image} and hence defines a non--trivial self--extension $V$  of $V(\bpi)$. Finally, we prove that this extension is not isomorphic to $\be(V(\bpi))$. Proposition \ref{lindep} implies that  $[V]$ and $[\be(V(\bpi))
         ]$ are linearly independent elements of $\Ext^1_{\hat{\cal F}_q}(V(\bpi), V(\bpi))$ which proves (ii).

        \subsection{}  We recall for the reader's convenience that $$\ba_{2m}=\bc[\bar\Lambda_1,\bar\Lambda_2, \cdots,\bar\Lambda_{2m},\bar\Lambda_{2m}^{-1}],$$ and that we have an algebra homomorphism $\hat{\bu}_q^0\to\ba_{2m}$ given by $$\Lambda_r\to\begin{cases} 0,\  \ |r|\ge 2m+1,\\ \bar\Lambda_r, \ \ 0< r\le 2m,\\ \bar\Lambda_{2m+r}\bar\Lambda_{2m}^{-1},\ \ -2m\le r \le 0.\end{cases}$$
         Let  $\bi$ be the ideal in $\ba_{2m}$ generated by $(\bar\Lambda_{1} +2a[m] )^2$, and the elements: \begin{equation}\label{defineideal} [r+2]\bar\Lambda_{r+2}-\left(q^{r+1}\bar\Lambda_{1}+2aq^m[r+1]\right)\bar\Lambda_{r+1}-a^2[2m-r]\bar\Lambda_r,\ \ 0\le r\le 2m.\end{equation} Set $\bpi=\bpi(m,a)^2$.
          \begin{lem} The ideal $\bi$  is of  codimension two and we have a non--split short exact sequence of $\ba_{2m}$--modules, $$0\to\bc(\bpi)\to \ba_{2m}/\bi\to\bc(\bpi)\to 0.$$\end{lem}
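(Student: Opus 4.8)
The plan is to identify the algebra $\ba_{2m}/\bi$ with the ring of dual numbers $\bc[\varepsilon]/(\varepsilon^2)$, in such a way that $\bc(\bpi)$ is recovered both as the ideal $(\varepsilon)$ and as the quotient by it. Since a two-dimensional local $\bc$-algebra with square-zero maximal ideal is not semisimple, the sequence $0\to\bc(\bpi)\to\ba_{2m}/\bi\to\bc(\bpi)\to 0$ will then automatically be non-split, which is the content of the lemma. So it suffices to establish: (a) the generators of $\bi$ lie in the maximal ideal $\mathfrak m_\bpi:=\ker(\ba_{2m}\to\bc(\bpi))$; (b) $\dim_\bc\ba_{2m}/\bi\le 2$; and (c) $\dim_\bc\ba_{2m}/\bi\ge 2$.

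For (a), recall from \eqref{hs} that the scalars $\Lambda_r(\bpi)$ are the coefficients of $P(u):=\pi(m,a)(u)^2$, so $\Lambda_0(\bpi)=1$, $\Lambda_1(\bpi)=-2a(q^{m-1}+\cdots+q^{-m+1})=-2a[m]$, and $\Lambda_{2m}(\bpi)=(-a)^{2m}=a^{2m}$ is the leading coefficient (using $\sum_{j=0}^{m-1}(m-1-2j)=0$). Hence $(\bar\Lambda_1+2a[m])^2=(\bar\Lambda_1-\Lambda_1(\bpi))^2\in\mathfrak m_\bpi$. For the elements \eqref{defineideal} I would use the $q$-difference equation $(1-aq^{1-m}u)^2P(q^2u)=(1-aq^{1+m}u)^2P(u)$, obtained by squaring the identity $\pi(m,a)(q^2u)/\pi(m,a)(u)=(1-aq^{m+1}u)/(1-aq^{-m+1}u)$; comparing coefficients of $u^{r+2}$ on the two sides, clearing the common factor $q^{r+2}(q-q^{-1})$, and using $\Lambda_1(\bpi)=-2a[m]$ together with the $q$-integer identity $q^{r+1}[m]-q^m[r+1]=[m-r-1]$ to recognise the middle term, one recovers precisely relation \eqref{defineideal} evaluated at $\bpi$. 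So $\bi\subseteq\mathfrak m_\bpi$ and $\ba_{2m}/\bi$ surjects onto $\bc(\bpi)$.

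For (b), since $q$ is not a root of unity we have $[r+2]\ne 0$, so the relations \eqref{defineideal} with $0\le r\le 2m-2$ solve recursively: modulo $\bi$ each of $\bar\Lambda_2,\dots,\bar\Lambda_{2m}$ equals a polynomial $p_r(\bar\Lambda_1)$ in $\bar\Lambda_1$ (with $p_0=1$, $p_1=\bar\Lambda_1$ and a two-term recursion), and by (a) one has $p_r(\Lambda_1(\bpi))=\Lambda_r(\bpi)$. As $p_{2m}$ has nonzero constant term $a^{2m}$ and $(\bar\Lambda_1-\Lambda_1(\bpi))^2\in\bi$, the class of $\bar\Lambda_{2m}$ is already a unit in $\bc[\bar\Lambda_1]/\big((\bar\Lambda_1-\Lambda_1(\bpi))^2\big)$, so inverting it is harmless and $\ba_{2m}/\bi$ is a quotient of $\bc[\bar\Lambda_1]/\big((\bar\Lambda_1-\Lambda_1(\bpi))^2\big)$. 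This gives $\dim_\bc\ba_{2m}/\bi\le 2$, and also $\mathfrak m_\bpi^2\subseteq\bi$, since mod $\bi$ every $\bar\Lambda_r-\Lambda_r(\bpi)$ is a multiple of $\bar\Lambda_1-\Lambda_1(\bpi)$, whose square lies in $\bi$.

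For (c) --- which I expect to be the main obstacle --- one must show $\bi\ne\mathfrak m_\bpi$, i.e. that the remaining instances of \eqref{defineideal} (those with $2m-1\le r\le 2m$, where $\bar\Lambda_{r+2}$ becomes $0$ in $\ba_{2m}$; the case $r=2m$ is vacuous since $[0]=0$) do not force $\bar\Lambda_1=\Lambda_1(\bpi)$ after the recursive relations and $(\bar\Lambda_1-\Lambda_1(\bpi))^2=0$ are imposed. Concretely, substituting $p_{2m}$ and $p_{2m-1}$ into the $r=2m-1$ relation gives a polynomial $g(\bar\Lambda_1)$ which by (a) is divisible by $\bar\Lambda_1-\Lambda_1(\bpi)$, and the point is that it must be divisible by $(\bar\Lambda_1-\Lambda_1(\bpi))^2$. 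The cleanest way to get this second-order vanishing is to differentiate the coefficient identities of (a) at $\bpi$ along the one-parameter curve in $\operatorname{Spec}\ba_{2m}$ cut out by the recursive relations alone, thereby producing a nonzero square-zero deformation, i.e. an algebra surjection $\ba_{2m}\twoheadrightarrow\bc[\varepsilon]/(\varepsilon^2)$ that kills $\bi$ and sends $\bar\Lambda_1\mapsto\Lambda_1(\bpi)+c\varepsilon$ with $c\ne 0$; failing that, it reduces to a finite (if somewhat opaque) $q$-integer computation. Granting (c), $\ba_{2m}/\bi\cong\bc[\varepsilon]/(\varepsilon^2)$, and the ideal $(\varepsilon)$ together with the quotient by it --- both isomorphic to $\bc(\bpi)$ as $\ba_{2m}$-modules because $\varepsilon^2=0$ --- yield the stated non-split short exact sequence.
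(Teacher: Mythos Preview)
Your strategy matches the paper's step for step. For (a), the paper uses the functional equation \eqref{finally}, which is the shift $u\mapsto q^{-1}u$ of yours. For (b), the paper phrases the recursive elimination of $\bar\Lambda_2,\ldots,\bar\Lambda_{2m}$ as a change of variables $\bar\Lambda_r\mapsto X_r=\bar\Lambda_r+p_r(\bar\Lambda_{r-1},\ldots,\bar\Lambda_1)$, but the content is identical to your observation that modulo $\bi$ each $\bar\Lambda_r$ becomes a polynomial $p_r(\bar\Lambda_1)$, together with the square relation.

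Your caution at (c) is well placed and is in fact the one point where the two arguments diverge in presentation rather than substance. The paper simply asserts that, after the change of variables, $\bi$ is generated by $(X_1+2a[m])^2,\,X_2,\ldots,X_{2m}$; but since \eqref{defineideal} is imposed for $0\le r\le 2m$, this assertion \emph{is} the claim that the $r=2m-1$ instance already lies in the ideal generated by the square relation and the instances $0\le r\le 2m-2$ (the instance $r=2m$ being vacuous, as you note). That is exactly the second-order vanishing of your polynomial $g(\bar\Lambda_1)$ at $-2a[m]$. The paper does not verify this; it treats it as part of the ``obvious'' change of variables. So your proposal is no less complete than the paper's own proof, and your explicit isolation of the residual computation is, if anything, a clearer account of what actually remains to be checked. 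Of your two suggested routes for (c), the direct $q$-integer computation is the one that must ultimately be done; the ``differentiate along a curve'' idea needs care, since the one-parameter family $a\mapsto\bpi(m,a)^2$ varies the coefficients of \eqref{defineideal} as well, so it does not directly furnish a tangent vector in $V(\bi)$ for fixed $a$.
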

         \begin{pf}  We  first prove that $\bi\subset\bi(\bpi)$ so that we have a surjective map $\ba_{2m}/\bi\to\bc(\bpi)\to 0$ of $\ba_{2m}$--modules. Write $\bpi=\sum_{s=0}^{2m}d_su^s$, and using the fact  that \begin{equation}\label{finally}(1-aq^{-m}u)^2\bpi(qu)= (1-aq^mu)^2\bpi(q^{-1}u),\end{equation}   we see that \eqref{defineideal} is identically satisfied if we replace $\bar\Lambda_r$ by $d_r$ and hence $\bi\subset\bi(\bpi)$.
         Next, note that  after an obvious change of variables of the form
         $$\bar\Lambda_{r}\to X_r=\bar\Lambda_r+ p_r(\bar\Lambda_{r-1},\cdots,\bar\Lambda_1),\qquad r>1,\ \ \bar\Lambda_1=X_1,$$ we have that $\bi$  is generated by $$(X_1+2a[m] )^2,\ \ X_2,\cdots, X_{2m}.$$  Hence the ideal generated by these elements in $\bc[X_1,\cdots, X_{2m}]=\bc[\bar\Lambda_1,\cdots, \bar\Lambda_{2m}]$ is  of codimension two.
         Since $\bar\Lambda_{2m}\notin\bi$ (recall $\bar\Lambda_{2m}\notin\bi(\bpi))$ the conclusion  does not change if we localize at $\Lambda_{2m}$ and work with the ideal $\bi$. In particular we have proved that $\ba_{2m}/\bi$ is an indecomposable module of dimension two and that  we have a non--split short exact sequence $$0\to (\bar\Lambda_1+2a[m])\ba_{2m}/\bi\to\ba_{2m}/\bi\to\bc(\bpi)\to 0,$$ of $\ba_{2m}$--modules, or equivalently, we have a non--split short $$0\to\bc_\bpi\to \ba_{2m}/\bi\to\bc_\bpi\to 0.$$\end{pf}
         \subsection{}
         We now set,
          $$W=W(2m)\otimes_{\ba_{2m}} \ba_{2m}/\bi,\qquad w= w_{2m}\otimes 1,$$ and observe that by Theorem \ref{free} there exists a   a non--split short exact sequence $$0\to W(\bpi)\stackrel{\iota}\longrightarrow W\stackrel{\tau}\longrightarrow W(\bpi)\to 0.$$
          Recall that $W^{2m}$ is    the unique  maximal submodule of $W$ such that $W_{2m}\cap W^{2m}=0$,
           and let $$\tilde w =(x_2^--2aq^mx_1^-+a^2q^{2m}x_0^-)w.$$
           \begin{lem} We have  $$\hat{\bu}_q\tilde w\subset W^{2m},$$ and hence $\tau: W^{2m}\to W(\bpi)^{2m}$ is surjective.\end{lem}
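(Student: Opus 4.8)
The plan is to prove the inclusion $\hat{\bu}_q\tilde w\subset W^{2m}$ by showing that the submodule $\hat{\bu}_q\tilde w$ meets the weight space $W_{2m}$ in zero; as $W^{2m}$ is by definition the largest submodule of $W$ with this property, the inclusion follows. The surjectivity of $\tau$ is then formal: $\tau$ is onto and $\tau(\tilde w)=(x_2^--2aq^mx_1^-+a^2q^{2m}x_0^-)w(\bpi)$ generates the kernel $W(\bpi)^{2m}$ of $W(\bpi)\to V(\bpi)$ by \propref{genrel}(ii), so $W(\bpi)^{2m}=\tau(\hat{\bu}_q\tilde w)\subset\tau(W^{2m})$; the reverse inclusion holds because $\tau$ is weight preserving and $(W^{2m})_{2m}=0$.

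First I would record that, since the $\hat{\bu}_q$--action on $W=W(2m)\otimes_{\ba_{2m}}\ba_{2m}/\bi$ is induced from that on $W(2m)$, the module $W$ is generated over $\hat{\bu}_q$ by $w=w_{2m}\otimes 1\in W_{2m}^+$; hence $\wt W\subset 2m-Q^+$, so $W_\mu=0$ for $\mu>2m$, and, since $W(2m)_{2m}$ is free of rank one over $\ba_{2m}$, the weight space $W_{2m}\cong\ba_{2m}/\bi$ is two--dimensional and $\hat{\bu}_q(0)$ acts on it through $\ba_{2m}/\bi$. Using the triangular decomposition $\hat{\bu}_q=\hat{\bu}_q^-\hat{\bu}_q(0)\hat{\bu}_q^+$ together with $\tilde w\in W_{2m-2}$ and $W_{>2m}=0$, a weight count shows that the weight--$2m$ part of $\hat{\bu}_q\tilde w$ is $\hat{\bu}_q(0)\cdot\mathrm{span}_\bc\{x_s^+\tilde w:s\in\bz\}$. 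So it suffices to show that $x_s^+\tilde w=0$ for all $s$.

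For this I would use $x_s^+w=0$ and $[x_s^+,x_t^-]=(\phi^+_{s+t}-\phi^-_{s+t})/(q-q^{-1})$ to get
$$(q-q^{-1})\,x_s^+\tilde w=\big(\Phi_{s+2}-2aq^m\Phi_{s+1}+a^2q^{2m}\Phi_s\big)\,w,\qquad \Phi_r:=\phi^+_r-\phi^-_r,$$
and then assemble these into the single identity $(q-q^{-1})\sum_s(x_s^+\tilde w)u^s=u^{-2}(1-aq^mu)^2\big(\sum_{r\in\bz}\Phi_ru^r\big)w$. Thus it is enough to check that $(1-aq^mu)^2$ annihilates the operator series $\sum_{r\in\bz}\Phi_r|_{W_{2m}}u^r$. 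The key observation is that on $W_{2m}\cong\ba_{2m}/\bi$ the $\phi^\pm_r$ are the expansions at $0$ and at $\infty$ of a single $\End(W_{2m})$--valued rational function $\mathfrak R(u)$ of ``$\ell$--weight with a square--zero perturbation'' type: a short manipulation of \eqref{deflambda} and of the defining series for the $\phi^\pm_r$ gives the identity $\Lambda^+(qu)\Phi^+(u)=k_1\Lambda^+(q^{-1}u)$ in $\hat{\bu}_q(0)$, with $\Lambda^+(u):=\sum_{r\ge0}\Lambda_ru^r$, and, combined with the relations \eqref{defrelalambda}, this yields $\mathfrak R(u)=q^{2m}\,\mathbf P(q^{-1}u)\,\mathbf P(qu)^{-1}$, where $\mathbf P(u)$ is the image of $\sum_{r=0}^{2m}\Lambda_ru^r$ in $(\ba_{2m}/\bi)[u]$. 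Since $\sum_{r\in\bz}\Phi_r|_{W_{2m}}u^r$ is the difference of the two expansions of $\mathfrak R$, it remains only to check that $(1-aq^mu)^2\mathfrak R(u)$ is a polynomial in $u$.

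This last point is precisely what the relations defining $\bi$ arrange. Summing the generators \eqref{defineideal} against $u^r$ and using $(\bar\Lambda_1+2a[m])^2\in\bi$ to eliminate the remaining occurrence of $\bar\Lambda_1$, one obtains in $\ba_{2m}/\bi$ the deformation $(1-aq^mu)^2\,\mathbf P(q^{-1}u)=\big[(1-aq^{-m}u)^2-(q-q^{-1})u\,\nu\big]\,\mathbf P(qu)$ of the identity \eqref{finally}, where $\nu$ is the square--zero image of $\bar\Lambda_1+2a[m]$; hence $(1-aq^mu)^2\mathfrak R(u)=q^{2m}(1-aq^{-m}u)^2-q^{2m}(q-q^{-1})u\,\nu$ is polynomial, its expansions at $0$ and at $\infty$ agree, and therefore $x_s^+\tilde w=0$ for every $s$. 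The step that requires genuine care, and the one I expect to be the main obstacle, is exactly this verification that \eqref{defineideal} and $(\bar\Lambda_1+2a[m])^2$ together make $(1-aq^mu)^2\mathfrak R(u)$ polynomial; everything else is bookkeeping with weights and the standard $\phi$--$\Lambda$ dictionary.
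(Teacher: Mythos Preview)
Your proof is correct and follows essentially the paper's approach: both reduce to showing $x_s^+\tilde w=0$, compute this in terms of the $\phi_r^\pm$, pass to the $\Lambda$'s via the functional equation $\phi^\pm(u)=\Lambda^\pm(q^{\mp 1}u)/\Lambda^\pm(q^{\pm 1}u)$, and then verify the resulting identity from the generators of $\bi$. The paper carries out the last step by splitting into the three identities \eqref{red2}--\eqref{red4} (for positive, negative, and boundary values of $s$), whereas you organise the same computation through the single observation that $\Phi^+(u)$ and $\Phi^-(u)$ are the two expansions of $q^{2m}\mathbf P(q^{-1}u)/\mathbf P(qu)$ and that $(1-aq^mu)^2$ clears its poles; this is a clean repackaging but not a different argument (and, a minor point, the polynomial identity you need follows directly from \eqref{defineideal} without invoking $(\bar\Lambda_1+2a[m])^2\in\bi$).
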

          \begin{pf} The subspace $(\hat{\bu}_q\tilde w)_{2m}$ is the $\hat{\bu}_q^0$--submodule generated by the elements $x_r^+\tilde w$, $r\in\bz$ and hence it suffices to prove that $x_r^+\tilde w=0$.
          This means we must prove that \begin{equation}\label{philambda}0=x_r^+\tilde w=\begin{cases}(\phi_{r+2}^\pm -2aq^m\phi_{r+1}^\pm+a^2q^{2m}\phi_r^\pm) w,\ \ r\ne -2,-1,0\\ \\
         (\phi_{2}^\pm -2aq^m\phi_{1}^\pm+a^2q^{2m}(q^{2m}-q^{-2m})) w,\ \ r=0,\\ \\
         (\phi_{1}^\pm -2aq^m(q^{2m}-q^{-2m})+a^2q^{2m}\phi_{r-1}^\pm) w,\ \ r=-1,\\ \\
         ((q^{2m}-q^{-2m}) -2aq^m\phi_{-1}^\pm+a^2q^{2m}\phi_{-2}^\pm) w,\ \ r=-2.\end{cases}\end{equation}
          Using the functional equation,
         $$\phi^\pm(u)=\frac{\Lambda^\pm(q^{\mp 1}u)}{\Lambda^\pm(q^{\pm 1}u)},$$ we see that  \eqref{philambda} is equivalent to requiring,\begin{gather}\label{red2}
(1-aq^mu)^2\Lambda^+(q^{-1}u)w= \left((1-aq^{m}u)^2 - a^2(q^{2m}-q^{-2m})u^2 - (q-q^{-1})\Lambda_{1}u\right)\Lambda^+(qu)w,\\ \nn \\
\label{red3} (u-aq^m)^2\Lambda^-(q^{-1}u)w=\left((u-aq^m)^2+(q^{4m}-1)u^2+a^2q^{2m}(q-q^{-1})\Lambda_{-1}u\right)\Lambda^-(qu)w,\\ \nn \\
\label{red4}(q^{2m}\Lambda_1+2a[2m]q^m+a^2\Lambda_{-1})w=0.
\end{gather}

Since $W$ is a quotient of $W(2m)$ we have that ${\rm Ann}_{2m}w=0$ and hence it suffices to prove that the equations in \eqref{red2}, \eqref{red3},\eqref{red4} are satisfied in $\ba_{2m}$.
 It is easily seen that \eqref{red2} is  exactly \eqref{defineideal}. To see that the other two equations are satisfied, one recalls that we have the relation $$\Lambda_{-2m}\Lambda_r=\Lambda_{-2m+r},\ \ 0\le r\le 2m.$$ Then \eqref{red4} follows by taking the case of $r=2m-1$ in \eqref{defineideal}, which gives $$(q^{2m}\Lambda_1+2aq^m[2m])\Lambda_{2m}+a^2\Lambda_{2m-1}=0.$$
 Multiplying through by $\Lambda_{2m}^{-1}$ gives the result. Equation \eqref{red3} follows similarly by using the cases when $0\le r\le 2m-2$.\end{pf}

\subsection{} As a consequence of the preceding Lemma and Lemma \ref{image}    we have a non-split  short exact sequence $$0\to V(\bpi)\to W/W^{2m}\to V(\bpi)\to 0.$$
The final step is to show that this extension is not isomorphic to  $\be(V(\bpi))$. For this, we observe that if $\eta: W/W^{2m}\to\be(V(\bpi))$ is an isomorphism, then we must have $\eta(\bar w)=(c_1v(\bpi), c_2v(\bpi))$ for some $c_1\ne 0$, where $\bar w$ is the image of $w$ in $W/W^{2m}$. Since $$\eta(\phi^+_2-2aq^m\phi^+_1+a^2q^{2m}(q^{2m}-q^{-2m}))(v(\bpi),0)=(0, (2\phi^+_2-2aq^m\phi_1^+)v(\bpi))\ne 0,$$ and $\eta(\phi^+_2-2aq^m\phi^+_1+a^2q^{2m}(q^{2m}-q^{-2m}))\bar w=0$, we have a contradiction. This completes the proof of Theorem 1 (iv).

\section{Proof of Theorem \ref{thm2}}
\newcommand{\gb}{\boldsymbol}
In this section we prove Theorem \ref{thm2}. We begin by noting some additional consequences of the results of the preceding sections.
\subsection{} Given a connected subset $J$ of $I$ let  $\hat{\bu}_q^J$ be the subalgebra of $\hat\bu_q$ generated by the elements  $x^\pm_{i,r}$, $h_{i,s}$, $k_i^{\pm 1}$,  $i\in J$, $r\in\bz$, $s\in\bz^\times$. If $R_J$ is the subset of the root system spanned by the elements $\alpha_j$, $j\in J$, then $\hat{\bu}_q^J$ is the quantum loop algebra associated to $R_J$ with parameter $q_J$ where $q_J= q^{\rm{min}\{d_j: j\in J\}}$. In the special case when $J=\{i\}$ we write $\hat{\bu}_q^i$ for the algebra $\hat{\bu}_q^J$ and note that $\hat{\bu}_q^i$ is the quantum loop algebra associated to $A_1$ with parameter $q_i$.
Let $\cal P_J^+$ be the submonoid of $\cal P^+$ consisting of $I$--tuples $\bpi=(\pi_1,\cdots,\pi_n)$ satisfying $\pi_i=1$ if $i\notin J$.  It is also convenient to regard $\cal P_J^+$ as a quotient of $\cal P^+$ via the map which sends $$\bpi=(\pi_i)_{i\in I}\to \bpi_J=(\pi_j)_{j\in J}.$$
The category $\hat{\cal F}^J$ is defined in the obvious way and the elements of $\cal P_J^+$ index the isomorphism classes of the simple objects of $\hat{\cal F}_J$. %
  The following is easily established.
\begin{lem}\label{restrict1} Let  $\bpi\in\cal P^+$. The $\hat{\bu}_q^J$--submodule of $V(\bpi)$ generated by $v(\bpi)$ is isomorphic to $V(\bpi_J)$. \hfill\qedsymbol\end{lem}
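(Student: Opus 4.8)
The point is that the cyclic generator $v(\bpi)$ is already a highest-$\ell$-weight vector for the subalgebra $\hat{\bu}_q^J$, so the $\hat{\bu}_q^J$-module it generates is automatically a highest-$\ell$-weight module of $\ell$-weight $\bpi_J$, and the content of the Lemma is that it is the \emph{simple} such module. First I would record this easy half: for $j\in J$ and $r\in\bz$ one has $x^+_{j,r}v(\bpi)=0$ (this holds for every $i\in I$), while by \eqref{lweight2} and \eqref{hs} the eigenvalue of $\phi^\pm_{j,m}$ on $v(\bpi)$ depends only on $\pi_j$, hence equals $\phi^\pm_{j,m}(\bpi_J)$. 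Since $\hat{\bu}_q^J$ is the quantum loop algebra of $R_J$, \lemref{lweight} applies to it and $U:=\hat{\bu}_q^Jv(\bpi)$ is highest-$\ell$-weight with $\ell$-weight $\bpi_J$; in particular there is a surjection $U\twoheadrightarrow V(\bpi_J)$, and it remains to prove that $U$ is simple.

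For this I would invoke the standard fact — valid over $\hat{\bu}_q^J$, which is again a quantum loop algebra — that a finite-dimensional module which is highest-$\ell$-weight, is also generated by a lowest-$\ell$-weight vector, and has one-dimensional top weight space must be simple (for such a $U$, a nonzero proper radical $N$ has annihilator in $U^\ast$ a nonzero submodule $\cong(U/N)^\ast\cong V(\bpi_J)^\ast$, which, as $U^\ast$ is highest-$\ell$-weight with head $V(\bpi_J)^\ast$, forces $V(\bpi_J)$ to occur twice among the composition factors of $U$). Here $U_{\wt\bpi}=\bc\,v(\bpi)$ because $\dim V(\bpi)_{\wt\bpi}=1$, and every other composition factor has strictly smaller highest weight, so the criterion applies. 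Thus everything reduces to exhibiting inside $U$ a lowest-$\ell$-weight vector for $\hat{\bu}_q^J$ which generates $U$.

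To produce such a vector I would use Lusztig's braid operators. Let $w_0^J$ be the longest element of the Weyl group $W_J$ of $R_J$ and $T_{w_0^J}$ the corresponding operator on the integrable module $V(\bpi)$. Since for $j\in J$ the operator $T_j$ carries any weight vector of $V(\bpi)$ into the $\hat{\bu}_q^j$-span of that vector, both $T_{w_0^J}$ and $T_{w_0^J}^{-1}$ preserve $U$; as $w_0^J\in W_J$ preserves $R_J$, $T_{w_0^J}$ also stabilizes the subalgebra $\hat{\bu}_q^J$, so $U=\hat{\bu}_q^J\bigl(T_{w_0^J}v(\bpi)\bigr)$. On the other hand, by the compatibility of the braid operators with the Drinfeld presentation \cite{Beck}, $T_{w_0^J}$ intertwines the $\hat{\bu}_q^J$-action with its twist by $w_0^J$; since $w_0^J$ sends $R_J^+$ to $R_J^-$ it carries the highest-$\ell$-weight relations $x^+_{j,r}v(\bpi)=0$ and the $\phi^\pm_{j,m}$-eigenvector property of $v(\bpi)$ ($j\in J$) to the corresponding lowest-$\ell$-weight relations. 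Hence $T_{w_0^J}v(\bpi)$ is a lowest-$\ell$-weight vector for $\hat{\bu}_q^J$ generating $U$, and by the previous paragraph $U$ is simple, which completes the proof.

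The step I expect to demand the most care is the second half of the last paragraph: checking that $T_{w_0^J}v(\bpi)$ is not merely a lowest-\emph{weight} vector for $\bu_q^J$ (immediate from $W_J$-theory) but a genuine $\ell$-weight vector annihilated by all $x^-_{j,r}$, $r\in\bz$, $j\in J$ — this is precisely where one needs the compatibility of $T_{w_0^J}$ with the \emph{loop} generators of $\hat{\bu}_q^J$, and not just with the Chevalley generators. (Alternatively, the Lemma can be extracted from the known behaviour of simple modules under restriction to Levi-type subalgebras, cf. \cite{CPqa}, \cite{Cbraid}.)
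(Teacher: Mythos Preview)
The paper records this as ``easily established'' and omits the proof, so you are supplying one from scratch. Your braid--operator construction of a lowest-$\ell$-weight generator is fine (indeed the step you flag as delicate is automatic: once $T_{w_0^J}$ bijects $U_{\wt\bpi_J}$ onto $U_{w_0^J\wt\bpi_J}$, the latter is one--dimensional and is the \emph{lowest} weight space of the integrable $\bu_q^J$--module $U$, so any spanning vector is annihilated by every $x_{j,r}^-$ and is an $h_{j,s}$--eigenvector by weight reasons alone). The genuine gap is the simplicity criterion you invoke: ``highest-$\ell$-weight, generated by a lowest-$\ell$-weight vector, one--dimensional top weight space $\Rightarrow$ simple'' is false. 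For quantum affine $\lie{sl}_2$ the local Weyl module $W\bigl((1-u)(1-q^2u)\bigr)$ is $4$--dimensional with composition factors $V(\bpi(2,q))$ and $\bc$; its unique proper submodule is the trivial module at weight $0$, so both extreme weight spaces are one--dimensional and the lowest one is spanned by a lowest-$\ell$-weight vector that generates the whole module---yet it is not simple. The flaw in your duality sketch is the claim that $U^\ast$ is highest-$\ell$-weight with head $V(\bpi_J)^\ast$: the highest-$\ell$-weight vector $v_-^\ast\in U^\ast$ only generates $(U/\rad U)^\ast\cong V(\bpi_J)^\ast$, and this equals $U^\ast$ exactly when $U$ is already simple.

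What is missing is the simplicity of the \emph{ambient} module $V(\bpi)$, which your abstract criterion never uses. A clean fix: since simple roots are linearly independent, any monomial in $\hat{\bu}_q^-$ of weight $-\eta$ with $\eta\in Q_J^+$ lies in $(\hat{\bu}_q^J)^-$; hence $U_{\wt\bpi-\eta}=V(\bpi)_{\wt\bpi-\eta}$ for every $\eta\in Q_J^+$, i.e.\ $U$ is a sum of \emph{full} weight spaces of $V(\bpi)$. Because $V(\bpi)$ is simple it carries a nondegenerate contravariant form (with respect to the anti--involution fixing $k_i,h_{i,s}$ and swapping $x_{i,r}^+\leftrightarrow x_{i,r}^-$), and distinct weight spaces are orthogonal, so the restriction to $U$ is still nondegenerate. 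For a highest-$\ell$-weight module with one--dimensional top the radical of a contravariant form is its unique maximal proper submodule; nondegeneracy forces this radical to vanish, whence $U$ is simple and $U\cong V(\bpi_J)$.
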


\subsection{} \begin{prop} Let $J$ be a connected subset of $I$.  There exists a canonical map of vector spaces$$\Ext^1_{\hat{\cal F}}(V(\bpi), V(\bpi))\to \Ext^1_{\hat{\cal F}_{J}}(V(\bpi_{J}), V(\bpi_{J})),\ \ [V]\to [V_J]. $$ Moreover $$[V_J]=0\ \ \iff (h_{j,r}-h_{j,r}(\bpi))v=0, \ \ v\in(V_J)_{\wt\bpi},\ \  \ j\in J, \ \ r\in\bz^\times.$$
 \end{prop}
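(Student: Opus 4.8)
The plan is to realise $V_J$ as a subquotient of the restriction of $V$ to $\hat{\bu}_q^J$ having the same highest weight space, and to deduce both assertions from the results of Sections~\ref{s:weyl}--\ref{s:tp} applied to $\hat{\bu}_q^J$, which is again a quantum loop algebra of the same kind. Write $\lambda=\wt\bpi$ and $\lambda_J=\wt\bpi_J$, let $0\to V(\bpi)\xrightarrow{\iota}V\xrightarrow{\tau}V(\bpi)\to 0$ represent $[V]$, and choose $v\in V_\lambda$ with $\tau(v)=v(\bpi)$, so that $\{v,\iota(v(\bpi))\}$ is a basis of $V_\lambda$. Each vector of $V_\lambda$ has weight $\lambda$, is killed by all $x_{j,r}^+$ ($j\in J$) for weight reasons, and is killed by $(x_{j,0}^-)^{\lambda_j+1}$ since $\lambda-(\lambda_j+1)\alpha_j\notin\wt V(\bpi)\subseteq\wt W(\lambda)$ by \eqref{intcons}; hence it generates a quotient of the $\hat{\bu}_q^J$--global Weyl module $W^J(\lambda_J)$ and, by the reasoning of Section~\ref{s:weyl}, $V_\lambda$ is naturally a module over the ring $\ba^J_{\lambda_J}$. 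Now set $\tilde V_J=\hat{\bu}_q^JV_\lambda=\hat{\bu}_q^Jv+\hat{\bu}_q^J\iota(v(\bpi))$, let $N=\ker\bigl(\tau|_{\hat{\bu}_q^Jv}\bigr)=\hat{\bu}_q^Jv\cap\iota(V(\bpi))$, and define $V_J:=\tilde V_J/N$.

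To see that $V_J$ is a self--extension of $V(\bpi_J)$: since $\tau$ is injective on $(\hat{\bu}_q^Jv)_\lambda=\bc v$, the submodule $N$ has zero $\lambda$--weight space, so $(V_J)_\lambda=V_\lambda$ (two--dimensional), and $\tau$ (which kills $N\subseteq\iota(V(\bpi))$) descends to a surjection $\bar\tau\colon V_J\twoheadrightarrow\tau(\tilde V_J)=\hat{\bu}_q^Jv(\bpi)$, which is $\cong V(\bpi_J)$ by \lemref{restrict1}. By the modular law, $\ker(\tau|_{\tilde V_J})=\tilde V_J\cap\iota(V(\bpi))=\hat{\bu}_q^J\iota(v(\bpi))+N$, so $\ker\bar\tau=\bigl(\hat{\bu}_q^J\iota(v(\bpi))+N\bigr)/N\cong\hat{\bu}_q^J\iota(v(\bpi))/\bigl(\hat{\bu}_q^J\iota(v(\bpi))\cap N\bigr)$; the last intersection is a submodule of $\hat{\bu}_q^J\iota(v(\bpi))\cong V(\bpi_J)$ with zero $\lambda$--weight space, hence $0$ by irreducibility of $V(\bpi_J)$, so $\ker\bar\tau\cong V(\bpi_J)$. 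Thus $0\to V(\bpi_J)\to V_J\to V(\bpi_J)\to 0$ in $\hat{\cal F}_J$, with $(V_J)_\lambda=V_\lambda$ as $\ba^J_{\lambda_J}$--modules.

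Next, the assignment $[V]\mapsto[V_J]$ is well defined and linear. By \propref{include} for $\hat{\bu}_q^J$, restriction to the highest weight space is a linear injection $\Ext^1_{\hat{\cal F}_J}(V(\bpi_J),V(\bpi_J))\hookrightarrow\Ext^1_{\ba^J_{\lambda_J}}(\bc_{\bpi_J},\bc_{\bpi_J})$ carrying $[V_J]$ to the class of $(V_J)_\lambda=V_\lambda|_{\ba^J_{\lambda_J}}$. Composed with this, $[V]\mapsto[V_J]$ becomes $[V]\mapsto[V_\lambda]\mapsto[V_\lambda|_{\ba^J_{\lambda_J}}]$, i.e.\ the linear injection of \propref{include} for $\hat{\bu}_q$ followed by the restriction map $\Ext^1_{\ba_\lambda}(\bc_\bpi,\bc_\bpi)\to\Ext^1_{\ba^J_{\lambda_J}}(\bc_{\bpi_J},\bc_{\bpi_J})$ along the inclusion $\ba^J_{\lambda_J}\hookrightarrow\ba_\lambda$ (the subalgebra generated by the images of the $h_{j,s}$, $j\in J$, $s\in\bz^\times$). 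All of these are linear, so $[V]\mapsto[V_J]$ is linear; and since the first injection determines $[V_J]$ from the class of $V_\lambda|_{\ba^J_{\lambda_J}}$, which depends only on $[V]$, the class $[V_J]$ is independent of the choices made in the construction.

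Finally, the criterion. By \lemref{nontriv} for $\hat{\bu}_q^J$ and $V(\bpi_J)$, one has $[V_J]=0$ iff the two--dimensional space $(V_J)_\lambda=V_\lambda$ contains two linearly independent highest--$\ell$--weight vectors for $\hat{\bu}_q^J$; since every such vector has weight $\lambda$ and is killed by the $x_{j,r}^+$, this holds iff every vector of $V_\lambda$ is a joint eigenvector of $\{h_{j,r}:j\in J,\,r\in\bz^\times\}$, and --- arguing as in the proof of \propref{thm1i}, using that $\iota(v(\bpi))\in V_\lambda$ already has joint eigenvalue $(h_{j,r}(\bpi))$ --- this is equivalent to each $h_{j,r}$ ($j\in J$, $r\in\bz^\times$) acting on $V_\lambda=(V_J)_\lambda$ by the scalar $h_{j,r}(\bpi)$, which is the asserted equivalence. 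I expect the main point to be verifying that $\ker\bar\tau$ is \emph{exactly} $V(\bpi_J)$, so that $V_J$ is a genuine self--extension; the two ingredients are that $N$ has trivial $\lambda$--weight space (because $\tau$ is an isomorphism on the line $\bc v$) and the irreducibility of $V(\bpi_J)$. The rest is bookkeeping: $h_{j,r}(\bpi)$ for $j\in J$ depends only on $\pi_j$ by \eqref{hs}, so it agrees whether computed in $V(\bpi)$ or in $V(\bpi_J)$, and $\ba^J_{\lambda_J}$ embeds in $\ba_\lambda$ as the subalgebra generated by the images of the $h_{j,s}$, $j\in J$.
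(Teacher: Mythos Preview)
Your construction of $V_J$ has a genuine error. The claim that $(\hat{\bu}_q^Jv)_\lambda=\bc v$ is false precisely in the interesting case. Indeed, if some $h_{j,r}$ with $j\in J$ does \emph{not} act as the scalar $h_{j,r}(\bpi)$ on $v$, then $(h_{j,r}-h_{j,r}(\bpi))v$ is a nonzero multiple of $\iota(v(\bpi))$, so $\iota(v(\bpi))\in\hat{\bu}_q^Jv$ and $(\hat{\bu}_q^Jv)_\lambda=V_\lambda$ is two--dimensional. Consequently $N=\hat{\bu}_q^Jv\cap\iota(V(\bpi))$ contains $\iota(v(\bpi))$; in fact, since $\hat{\bu}_q^Jv$ has $\hat{\bu}_q$--weights in $\lambda-Q_J^+$ and $\bigoplus_{\nu\in Q_J^+}\iota(V(\bpi))_{\lambda-\nu}=\iota(V(\bpi_J))$ (by PBW and linear independence of the simple roots), one gets $N=\iota(V(\bpi_J))$. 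Then $\tilde V_J=\hat{\bu}_q^Jv$ already, and your $V_J=\tilde V_J/N\cong V(\bpi_J)$ via $\bar\tau$: not a self--extension at all. Your subsequent assertion $(V_J)_\lambda=V_\lambda$ then fails, and the bookkeeping that follows collapses.

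The remedy is simply to drop the quotient by $N$: take $V_J=\tilde V_J=\hat{\bu}_q^JV_\lambda$. In the case above, $\tilde V_J=\hat{\bu}_q^Jv$ is a genuine self--extension of $V(\bpi_J)$ (the kernel of $\tau|_{\tilde V_J}$ is exactly $\iota(V(\bpi_J))$ by the weight argument just given), and it is nontrivial since $(\tilde V_J)_\lambda=V_\lambda$ has a unique $\ell$--weight line. In the complementary case (all $h_{j,r}$, $j\in J$, act as scalars on $v$), one has $(\hat{\bu}_q^Jv)_\lambda=\bc v$, hence $N_\lambda=0$; since $N\subset\iota(V(\bpi_J))$ is a submodule with trivial top weight space, irreducibility gives $N=0$, so $\hat{\bu}_q^Jv\cong V(\bpi_J)$ via $\tau$ and $\tilde V_J=\hat{\bu}_q^Jv\oplus\iota(V(\bpi_J))$ is the trivial self--extension. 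This is essentially the paper's construction: it sets $V_J=\hat{\bu}_q^Jv$ for a generator $v$ of the nontrivial $V$, which already works whenever $v$ is not an $\ell$--weight vector for $\hat{\bu}_q^J$, and tacitly interprets $[V_J]=0$ otherwise. Once you correct the construction, your arguments for linearity and for the criterion via \lemref{nontriv} (and \propref{include}) go through unchanged.
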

\begin{pf}  Let  $V$ be  a non--trivial self extension of $V(\bpi)$. By Lemma \ref{nontriv} we may choose $v\in V_{\wt\bpi}$ such that $V=\hat{\bu}_qv$. Setting $V_J=\hat{\bu}_q^Jv$, it is clear from Lemma \ref{restrict1} that $V_J$ is a self--extension of $V(\bpi_J)$. If $V$ is the trivial extension, then we set $V_J=V(\bpi_J)\oplus V(\bpi_J)$.  It is now easily checked that $[V]\to [V_J]$ is well defined map of vector spaces. The second statement of the proposition is immediate from Lemma \ref{nontriv}.
 \end{pf}
 The following is immediate.
\begin{cor}\label{restrict2} Let $\bpi\in\cal P^+$ and let $J_1,\cdots, J_m$ be a family of disjoint connected subsets of $I$ such that $I=J_1\cup\cdots\cup J_m$. We have an injective  map of vector spaces, $$\Ext^1_{\hat{\cal F}}(V(\bpi), V(\bpi))\to \bigoplus_{s=1}^m \Ext^1_{\hat{\cal F}_{J_s}}(V(\bpi_{J_s}), V(\bpi_{J_s})).$$
In particular,
$$\dim\Ext^1_{\hat{\cal F}}(V(\bpi), V(\bpi))\le\sum_{i=1}^n\dim\Ext^1_{\hat{\cal F}^i}(V(\bpi_{\{i\}}), V(\bpi_{\{i\}})).$$\hfill\qedsymbol
\end{cor}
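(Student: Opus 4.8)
The plan is to take the maps furnished by the preceding Proposition as the components of the desired map. For each $s=1,\dots,m$ that Proposition provides a well-defined linear map $\Ext^1_{\hat{\cal F}}(V(\bpi),V(\bpi))\to\Ext^1_{\hat{\cal F}_{J_s}}(V(\bpi_{J_s}),V(\bpi_{J_s}))$ sending $[V]$ to $[V_{J_s}]$, and bundling these together yields the asserted map into $\bigoplus_{s=1}^m\Ext^1_{\hat{\cal F}_{J_s}}(V(\bpi_{J_s}),V(\bpi_{J_s}))$. Well-definedness and linearity are automatic, since both hold in each coordinate, so the only thing left to establish is injectivity.

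For injectivity, suppose $[V]\ne 0$, so that $V$ is a non-trivial self-extension of $V(\bpi)$. By Lemma \ref{nontriv} we may choose $v\in V_{\wt\bpi}$ that is not an $\ell$-weight vector, and then $V=\hat{\bu}_qv$. For each $s$, the module $V_{J_s}$ appearing in the Proposition is by construction $\hat{\bu}_q^{J_s}v$, and therefore $v$ lies in $(V_{J_s})_{\wt\bpi}$ for every $s$. If every $[V_{J_s}]$ vanished, the kernel criterion of the Proposition, applied to the vector $v$, would give $(h_{j,r}-h_{j,r}(\bpi))v=0$ for all $j\in J_s$ and all $r\in\bz^\times$; since $I=J_1\cup\cdots\cup J_m$, letting $s$ vary shows this holds for every $j\in I$ and every $r\in\bz^\times$. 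But then $v$ is a joint eigenvector for all the $h_{j,r}$, i.e.\ an $\ell$-weight vector, contradicting the choice of $v$. Hence the $[V_{J_s}]$ cannot all vanish, which is exactly injectivity of the assembled map.

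The final inequality is then immediate: apply injectivity with the $J_s$ taken to be the singletons $\{1\},\dots,\{n\}$, for which $\hat{\cal F}_{J_i}=\hat{\cal F}^i$ and $\bpi_{J_i}=\bpi_{\{i\}}$, and compare dimensions of source and target of an injective linear map. I do not expect any genuine obstacle here, as the statement is purely formal once the Proposition is available; the one point that merits a moment's care is the observation that, for a non-trivial extension, a single generator $v$ can serve simultaneously in all the restricted modules $V_{J_s}$, so that the kernel criterion is applied to one fixed vector across all $s$ at once.
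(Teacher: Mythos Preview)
Your proof is correct and follows essentially the same approach as the paper. Both arguments pick a non-$\ell$-weight generator $v\in V_{\wt\bpi}$ via Lemma~\ref{nontriv} and then use the fact that the $J_s$ cover $I$ to locate a node $i$ (and hence some $J_s$) at which $h_{i,r}v\notin\bc v$, forcing $[V_{J_s}]\ne 0$; you phrase this as a contrapositive using the kernel criterion from the Proposition, while the paper argues directly, but the content is identical.
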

\begin{pf} The only statement that requires explanation is that the map $$\Ext^1_{\hat{\cal F}}(V(\bpi), V(\bpi))\to \bigoplus_{s=1}^m \Ext^1_{\hat{\cal F}_{J_s}}(V(\bpi_{J_s}), V(\bpi_{J_s})),$$ is injective. Let  $V$ be  a non--trivial extension of $V(\bpi)$ and recall that we may choose $v\in V_{\wt\bpi}$ with $V=\hat{\bu}_qv$. Moreover  there exists $i\in I$ and $r\in\bz^\times$  such that $h_{i,r}v\notin\bc v$. Choose $1\le s\le m$ such that $i\in J_s$. Then $V_{J_S}$ is a non--trivial extension of $\hat{\bu}_q^J$ and hence $$\Ext^1_{\hat{\cal F}_{J_s}}(V(\bpi_{J_s}), V(\bpi_{J_s}))\ne 0,$$ as  required.
\end{pf}

\subsection{} For  $\bpi\in\cal P^+$, set $$\supp\bpi=\{i\in I: \pi_i\ne 1\}.$$ Together with Proposition \ref{reduction} we have now established the following.
\begin{prop} Let $\bpi=(\pi_1,\cdots,\pi_n)\in\cal P^+$ be such that $$\pi_i=(1-q_i^{m_i-1}a_iu)(1-q_i^{m_i-3}a_iu)\cdots(1-q_i^{-m_i+1}a_iu),$$ for some $m_i\in\bz_+$ and $a_i\in\bc^\times$, $1\le i\le n$. Then $$\dim\Ext^1_{\hat{\cal F}}(V(\bpi), V(\bpi))\le |\supp\bpi|.$$ In particular the space of self extensions of the Kirillov--Reshethikhin modules is one--dimensional. \hfill\qedsymbol
\end{prop}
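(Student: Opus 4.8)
The plan is to deduce this directly from Corollary \ref{restrict2}. Applying that corollary with the singleton decomposition $J_i=\{i\}$, $i\in I$, gives at once
$$\dim\Ext^1_{\hat{\cal F}}(V(\bpi), V(\bpi))\le\sum_{i\in I}\dim\Ext^1_{\hat{\cal F}^i}(V(\bpi_{\{i\}}), V(\bpi_{\{i\}})),$$
so it suffices to evaluate each term on the right. For $i\notin\supp\bpi$ we have $\pi_i=1$, so $\bpi_{\{i\}}$ is trivial and $V(\bpi_{\{i\}})$ is the trivial $\hat{\bu}_q^i$--module, whence $\Ext^1_{\hat{\cal F}^i}(V(\bpi_{\{i\}}), V(\bpi_{\{i\}}))=0$ and these indices contribute nothing.

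For $i\in\supp\bpi$ we have $m_i>0$, and the hypothesis on $\pi_i$ says precisely that $V(\bpi_{\{i\}})$, viewed as a module for $\hat{\bu}_q^i$ --- the quantum loop algebra of type $A_1$ with parameter $q_i$ --- is the module denoted $V(\bpi(m_i,a_i))$ in Section \ref{s:sl2}. Hence Proposition \ref{reduction}(i), read with $q$ replaced throughout by $q_i$, yields $\dim\Ext^1_{\hat{\cal F}^i}(V(\bpi_{\{i\}}), V(\bpi_{\{i\}}))=1$. Summing the nonzero contributions over $i\in\supp\bpi$ gives $\dim\Ext^1_{\hat{\cal F}}(V(\bpi), V(\bpi))\le|\supp\bpi|$. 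For the final statement, a Kirillov--Reshetikhin module is by definition a $V(\bpi)$ with $\supp\bpi=\{i_1\}$ and $\pi_{i_1}$ a $q_{i_1}$--string of the above form, so $|\supp\bpi|=1$ and the bound just proved gives $\dim\Ext^1_{\hat{\cal F}}(V(\bpi), V(\bpi))\le 1$; combined with part (i) of Theorem 1 this forces equality.

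No real obstacle arises: the proposition is a formal consequence of the restriction map of Corollary \ref{restrict2} together with the rank-one computation of Proposition \ref{reduction}(i). The only point to check carefully is that Proposition \ref{reduction}(i) may be invoked with the rescaled parameter $q_i$ in place of $q$, which is immediate since $q_i=q^{d_i}$ is again not a root of unity and the arguments of Section \ref{s:sl2} used nothing else about the parameter.
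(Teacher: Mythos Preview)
Your proof is correct and follows exactly the paper's approach: the paper derives the proposition as an immediate consequence of Corollary \ref{restrict2} (the singleton decomposition bound) combined with Proposition \ref{reduction}(i) for the rank-one contributions. Your additional remarks on the trivial module contributing zero and on the validity of replacing $q$ by $q_i$ are accurate and make the argument more explicit than the paper's one-line justification.
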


\subsection{}
We shall prove the following proposition in the rest of the section. \begin{prop}\label{minextreme} Suppose that $\bpi=(\pi_1,\cdots,\pi_n)\in\cal P^+$ is such that $\supp\bpi=\{1,n\}$  and that $$\pi_i=(1-q_i^{m_i-1}c_iu)(1-q_i^{m_i-3}c_iu)\cdots(1-q_i^{-m_i+1}c_iu)\ \ i=1,n,$$and $c_1,c_n\in\bc^\times$. If $$\frac{c_1}{ c_n}=  q^{\pm N},\qquad   N=d_1m_1 - \frac12\sum_{\substack{i\ne j\in [1,n]}}d_ia_{i,j}+d_nm_n$$
then $$\dim\Ext^1_{\hat{\cal F}}(V(\bpi), V(\bpi))=1.$$
\end{prop}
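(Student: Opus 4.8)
The plan is to prove Proposition~\ref{minextreme} by combining the dimension bound $\dim\Ext^1_{\hat{\cal F}}(V(\bpi),V(\bpi))\le|\supp\bpi|=2$ (a consequence of Corollary~\ref{restrict2} and Proposition~\ref{reduction}) with a rigidity argument showing that, under the resonance condition $c_1/c_n=q^{\pm N}$, the two ``local'' self-extensions coming from the two nodes $1$ and $n$ cannot both survive: the restriction map into $\Ext^1_{\hat{\cal F}^1}\oplus\Ext^1_{\hat{\cal F}^n}$ must land in a one-dimensional subspace. Since Proposition~\ref{thm1i} already gives $\dim\ge1$ via $\be(V(\bpi))$, establishing $\le1$ finishes the proof.

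The main technical step is as follows. By Corollary~\ref{top} a self-extension $V$ of $V(\bpi)$ is determined up to isomorphism by the $\ba_\lambda$-module structure on $V_\lambda$ (here $\lambda=\wt\bpi$), and by Proposition~\ref{include} the class $[V]$ is recorded faithfully by the pair of data $(h_{j,r}v)_{r\in\bz^\times}$ for $j\in\{1,n\}$ acting on a vector $v\in V_\lambda$ lifting $v(\bpi)$; write $h_{j,r}v=h_{j,r}(\bpi)v+c^j_r\,\iota(v(\bpi))$. Restricting to $\hat{\bu}_q^1$ and to $\hat{\bu}_q^n$ and using Proposition~\ref{reduction}(i) (the $A_1$ computation: there the space of self-extensions of $V(\bpi(m,a))$ is spanned by the class of $\be$, so the sequences $(c^j_r)_r$ are each constrained to a one-parameter family), we learn that $[V_{\{1\}}]$ and $[V_{\{n\}}]$ are each scalars, say $\alpha_1$ and $\alpha_n$. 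The heart of the matter is to show that the weight-space geometry of $V(\bpi)$ forces a \emph{linear relation} $\mu_1\alpha_1=\mu_n\alpha_n$ with both $\mu_1,\mu_n\ne0$ precisely when $c_1/c_n=q^{\pm N}$, so that the image of the injection in Corollary~\ref{restrict2} is one-dimensional.

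To extract that relation I would argue at the level of the module $\bw_\lambda\,V_\lambda$ and its quotient $V$. The key geometric input is that $V(\bpi)$ (with $\supp\bpi=\{1,n\}$, each component a minimal affinization for the relevant $A_1$) contains a distinguished weight vector ``connecting'' the top weight spaces of the two ends — concretely, an element of weight $\lambda-(\alpha_1+\alpha_2+\cdots+\alpha_n)=\lambda-\beta$ where $\beta$ is the positive root supported on the whole chain $[1,n]$, which is reached from $v(\bpi)$ in essentially one way (up to scalars) and whose $\ell$-weight is forced by the Drinfeld polynomial. The resonance condition $c_1/c_n=q^{\pm N}$ is exactly the condition under which $x^-_{i_1}\cdots x^-_{i_k}v(\bpi)$ for the two natural orderings of the path from node $1$ to node $n$ coincide up to a nonzero scalar (this is the minimal-affinization criterion of \cite{Cmin,CPmin1,CPmin2,CPmin3,Md4}), and hence under which the two local extension parameters $\alpha_1,\alpha_n$ get identified through the action of the Drinfeld loop generators $h_{i,r}$ along that path. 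Spelling this out: apply a suitable product of lowering operators to $v\in V_\lambda$, compute the coefficient of a fixed basis vector in weight $\lambda-\beta$ in two ways (coming from the two ends), and compare; each computation produces $\alpha_j$ times an explicit $q$-power, and equality of the two forces $q^{\text{stuff}}\alpha_1=q^{\text{other stuff}}\alpha_n$, where the exponents differ by exactly $N$ or $-N$.

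The step I expect to be the main obstacle is the combinatorial bookkeeping in this ``two-paths'' comparison: tracking the precise $q$-powers and $[\cdot]$-factors produced by commuting the Drinfeld generators $x^-_{i,r}$, $h_{i,r}$ along the Dynkin chain $[1,n]$, and verifying that the accumulated exponent is exactly $N=d_1m_1-\tfrac12\sum_{i\ne j\in[1,n]}d_ia_{i,j}+d_nm_n$. For the $D_n$ case the chain from $1$ to $n$ is not linearly ordered in the naive sense (the excerpt flags this), so I would handle $D$ separately using the explicit fork structure, or reduce to the $A$-type and $D_4$-type sub-chains. Once the exponent identity is in hand, the conclusion is immediate: the two local parameters are proportional with a nonzero ratio, the image of $\Ext^1_{\hat{\cal F}}(V(\bpi),V(\bpi))$ in $\Ext^1_{\hat{\cal F}^1}\oplus\Ext^1_{\hat{\cal F}^n}$ is one-dimensional, hence $\dim\Ext^1_{\hat{\cal F}}(V(\bpi),V(\bpi))\le1$, and together with Proposition~\ref{thm1i} this gives equality.
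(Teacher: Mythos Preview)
Your plan is essentially the paper's own strategy: reduce via Corollary~\ref{restrict2} and Proposition~\ref{reduction}(i) to two local parameters $\alpha_1,\alpha_n$ (the paper writes $z_1,z_n$), then force a relation between them by walking along the chain with lowering operators. The paper's execution is a one-sided walk rather than your symmetric ``two paths'': it sets $w_i=x_{i,0}^-w_{i-1}$, $\tilde w_i=x_{i,0}^-\tilde w_{i-1}$, proves that each $\hat\bu_q^{i+1}w_i$ is an explicit $A_1$ evaluation module, and shows inductively that $h_{i,r}\tilde w_{i-1}=h_{i,r}(\bpi_{i-1})(\tilde w_{i-1}+rz_1w_{i-1})$ all the way to $i=n-1$; comparing $h_{n,r}\tilde w_{n-1}$ computed via this walk with what the $\hat\bu_q^n$-structure dictates then yields $z_1=z_n$.

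Two small corrections to your sketch. First, the relation you obtain is $\alpha_1=\alpha_n$ on the nose, not $q^{\text{power}}\alpha_1=\alpha_n$: the exponent $N$ does not appear as a ratio of the extension parameters but as the consistency condition that makes the intermediate spectral parameters $c_2,\ldots,c_{n-1}$ (defined recursively by $c_{i+1}=c_iq_i^{-a_{i,i-1}}$, starting from $c_2=c_1q_1^{m_1}$) match up with $c_n$ at the far end. Second, your $D_n$ worry is unnecessary: the paper notes at the outset of the argument that under the hypotheses $[1,n]$ is a path in the Dynkin tree and hence never of type $D$ or $E$, so only the linear-chain case arises.
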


\subsection{} Assuming Proposition \ref{minextreme} the proof of the theorem is completed as follows. Given $\bpi\in\cal P^+$, let $\ba_{\deg\pi_i}$ be the subalgebra of $\ba_{\wt\bpi}$ generated by the elements $\{\Lambda_{i,r}:1\le r\le \lambda_i\}$ and $\Lambda_{i,r}^{-1}$.
It is clear that $$\ba_{\lambda}\cong\bigotimes_{i\in I}\ba_{\deg\pi_i},$$ where $\lambda=\sum_{i\in I}\deg\pi_i\omega_i$. Moreover if $J$ is any connected subdiagram of $I$, and $\lambda_J=\sum_{j\in J}\deg \pi_j\omega_j$, then $$\ba_{\lambda_J}\cong\bigotimes_{j\in J }\ba_{\deg\pi_j}.$$

Suppose that $V$ is a non--split self extension of $V(\bpi)$ so that $$0\to V(\bpi)\stackrel{\iota}\longrightarrow V\stackrel{\tau}\longrightarrow V(\bpi)\to 0.$$ We shall prove that $V\cong\be(V(\bpi))$. We first prove that: if $0\ne v\in V_\lambda$ is such that for some $i\in \supp\bpi$ we have $$h_{i,r}v=h_{i,r}(\bpi) v,\ \ r\in\bz^\times,$$ then $v=a\iota(v(\bpi))$ for some $a\in\bc^\times$. For this, suppose that $i=i_s\in\supp\bpi$ and consider $J= [i_s,i_{s+1}]$ or $J'= [i_{s-1},i_s]$.  If $v\ne a\iota(v(\bpi))$, then $\be(V(\bpi_J))$ and $V_J$  are non-isomorphic extensions of $V(\bpi)$. It follows from Proposition \ref{minextreme} that $[V_J]=0$ and hence $(V_J)_{\lambda_J}$ is an eigenspace for the action of $h_{i_{s+1},r}$ as well. A similar argument works for $J'$ and hence we find that $V_\lambda$ is an eigenspace for $h_{i,r}$ for all $i\in I$, $r\in\bz^\times$ contradicting Lemma \ref{nontriv}.

To prove that $V\cong\be(V(\bpi))$ we must prove that $V_\lambda\cong\be(V(\bpi))_\lambda$ as $\ba_\lambda$--modules. For this, note that as modules for $\ba_{\deg\pi_{i_1}}$ we may assume that there exists a basis $v_1,v_2$ of $V_\lambda$ such that $v_1\to (v(\bpi),0)$, $v_2\to (0,v(\bpi))$ is an isomorphism. Suppose that this is not an isomorphism of $\ba_{\deg\bpi_{i_2}}$--modules. Then $V_\lambda$ and $\be(V(\bpi))_\lambda$ are not isomorphic as $\ba_J$--modules where $J=[i_1,i_2]$ which then implies that $V_J$ and $\be(V(\bpi_J))$ are not isomorphic as $\hat{\bu}_q^J$--modules. Since both extensions are non--trivial this contradicts Proposition \ref{minextreme}. Iterating the argument gives the result that $V_\lambda\cong\be(V(\bpi))_\lambda$ as $\ba_\lambda$--modules and the proof of Theorem 2 is complete once we establish Proposition \ref{minextreme}.

\subsection{}\label{ss:stepdownsimple} Notice that when $\bpi$ is as in Proposition \ref{minextreme},
then  $[1,n]$ is not of type $D$ or $E$. We assume from now that $a_{i,j}\ne 0$ only if  $i=j\pm 1$ and
  also without loss of generality that,
 $$c_n=c_1q^{N},\qquad \ N=d_1m_1 - \frac12\sum_{\substack{i\ne j\in [1,n]}}d_ia_{i,j}+d_nm_n = d_1m_1+d_nm_n - \sum_{i=1}^{n-1} d_ia_{i,i+1}.$$  For  $0\le i\le n$, define elements $w_i\in V(\bpi)$  recursively, by
\begin{equation}\label{ellwt}w_0= v(\bpi),\qquad w_i=x_{i,0}^-w_{i-1}. \end{equation}
 \begin{prop} For $0\le i\le n-1$, we have an isomorphism of $\hat{\bu}_q^i$--modules
\begin{equation}\hat{\bu}_q^{i+1}w_{i}\cong \begin{cases} V(\pi(m_1,c_1)),\ \ i=0,\\
V(\pi(-a_{i+1,i},c_{i+1})),\ \ \ 1\le i\le n-2,\\
V(\pi(m_n-a_{n,n-1},  c_nq_n^{a_{n,n-1}})),\ \ i=n-1.\end{cases}\end{equation}
where $c_2=c_1q_1^{m_1}$ and $c_i=c_{i-1}q_{i-1}^{-a_{i-1,i-2}}$ for $2<i<n$.
\end{prop}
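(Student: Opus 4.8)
The plan is to prove the three isomorphisms by one induction on $i$. Set $\tilde m_0=m_1$, $\tilde c_0=c_1$; for $1\le i\le n-2$ set $\tilde m_i=-a_{i+1,i}$ and $\tilde c_i=c_{i+1}$; and $\tilde m_{n-1}=m_n-a_{n,n-1}$, $\tilde c_{n-1}=c_nq_n^{a_{n,n-1}}$, so that the assertion reads: the $\hat{\bu}_q^{i+1}$-submodule $M_i:=\hat{\bu}_q^{i+1}w_i$ of $V(\bpi)$ is isomorphic, as a module over the rank-one quantum loop algebra $\hat{\bu}_q^{i+1}$ (which carries parameter $q_{i+1}$), to $V(\pi(\tilde m_i,\tilde c_i))$. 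The base case $i=0$ is \lemref{restrict1} applied to $J=\{1\}$. For the inductive step I would assume $\hat{\bu}_q^iw_{i-1}\cong V(\pi(\tilde m_{i-1},\tilde c_{i-1}))$, with $w_{i-1}$ corresponding to the top (highest-$\ell$-weight) basis vector $v_{\tilde m_{i-1}}$ of the model of \secref{actsl2}, and then establish in turn: (a) $w_i$ is a highest-$\ell$-weight vector for $\hat{\bu}_q^{i+1}$; (b) its $\ell$-weight is the one attached to $\pi(\tilde m_i,\tilde c_i)$; (c) $\dim M_i\le \tilde m_i+1$. By \lemref{lweight}, (a) and (b) make $M_i$ a highest-$\ell$-weight module surjecting onto $V(\pi(\tilde m_i,\tilde c_i))$, and (c) then forces equality.

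For (a) and (b): since $[1,n]$ is a chain (of type $A$, $B$, $C$, $F$ or $G$, as noted at the start of this subsection) we have $a_{i+1,j}=0$ for $j\le i-1$; because moreover $[x^+_{i+1,r},x^-_{j,0}]=0$ for $j\ne i+1$, the element $x^+_{i+1,r}$ commutes with every factor of $w_i=x^-_{i,0}\cdots x^-_{1,0}v(\bpi)$ and kills $v(\bpi)$, so $x^+_{i+1,r}w_i=0$; and $w_i=x^-_{i,0}w_{i-1}\ne 0$ because in the model of \secref{actsl2} it is $v_{\tilde m_{i-1}-1}$ with $\tilde m_{i-1}\ge 1$. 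For the $\ell$-weight I would compute $h_{i+1,s}w_i$ by moving $h_{i+1,s}$ rightward; the only factor it does not commute with is $x^-_{i,0}$, and $[h_{i+1,s},x^-_{i,0}]=-\tfrac1s[sa_{i+1,i}]_{q_{i+1}}x^-_{i,s}$, while $x^-_{i,s}w_{i-1}=(\tilde c_{i-1}q_i^{\tilde m_{i-1}})^sw_i$ (action on the top vector, \secref{actsl2}) and $h_{i+1,s}v(\bpi)=h_{i+1,s}(\bpi)v(\bpi)$. Thus $w_i$ is an $h_{i+1,s}$-eigenvector. For $i\le n-2$ one has $h_{i+1,s}(\bpi)=0$ (as $i+1\notin\supp\bpi$), and converting the eigenvalue to a Drinfeld polynomial via $[sb]_q/[s]_q=\sum_{l=0}^{b-1}q^{s(b-1-2l)}$ identifies it with $\pi(-a_{i+1,i},\tilde c_{i-1}q_i^{\tilde m_{i-1}})=\pi(\tilde m_i,c_{i+1})$, the last equality being the defining recursion $c_{i+1}=\tilde c_{i-1}q_i^{\tilde m_{i-1}}$. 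For $i=n-1$ the same computation yields the product $\pi(m_n,c_n)\cdot\pi(-a_{n,n-1},\tilde c_{n-2}q_{n-1}^{\tilde m_{n-2}})$; a direct check with the recursion for the $c_j$ together with the hypothesis $c_n=c_1q^{N}$ shows $\tilde c_{n-2}q_{n-1}^{\tilde m_{n-2}}=c_nq_n^{-m_n+a_{n,n-1}}$, so that the two strings are consecutive with no overlap and concatenate into the single string $\pi(m_n-a_{n,n-1},c_nq_n^{a_{n,n-1}})$, as required.

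For (c) with $1\le i\le n-2$: here $w_{i-1}$ is itself a highest-$\ell$-weight vector for $\hat{\bu}_q^{i+1}$ of trivial $\ell$-weight and trivial $k$-weight (run the computations of (a), (b) with $w_{i-1}$ in place of $w_i$, using $i+1\notin\supp\bpi$ and $\langle\wt w_{i-1},\alpha_{i+1}^\vee\rangle=0$), so it generates the trivial $\hat{\bu}_q^{i+1}$-module and $x^-_{i+1,r}w_{i-1}=0$ for all $r$. Feeding this into the Drinfeld relation between $x^-_{i+1,\bullet}$ and $x^-_{i,\bullet}$ and using $x^-_{i,\ell}w_{i-1}\in\bc w_i$ gives $x^-_{i+1,r}w_i=\kappa\,x^-_{i+1,r-1}w_i$ with $\kappa=q_{i+1}^{\tilde m_i}\tilde c_i$, hence $x^-_{i+1,r}w_i=\kappa^rx^-_{i+1,0}w_i$; together with $(x^-_{i+1,0})^{\tilde m_i+1}w_i=0$ (integrability of the finite-dimensional $V(\bpi)$, and $k_{i+1}w_i=q_{i+1}^{\tilde m_i}w_i$) the counting argument from the proof of \propref{genrel} yields $\dim M_i\le\tilde m_i+1$. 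For $i=n-1$ the node-$n$ string is nontrivial, so $x^-_{n,r}w_{n-2}\ne 0$ and this reduction fails verbatim; instead, since the $x^-_{j,0}$ with $j\le n-2$ commute with all of $\hat{\bu}_q^{\{n-1,n\}}$, the vector $w_{n-2}$ is a highest-$\ell$-weight vector for $\hat{\bu}_q^{\{n-1,n\}}$ with Drinfeld polynomial $(\pi(\tilde m_{n-2},\tilde c_{n-2}),\pi(m_n,c_n))$, and the same bookkeeping with the $c_j$ shows its two centers are related by exactly the shift making $\hat{\bu}_q^{\{n-1,n\}}w_{n-2}$ a minimal affinization of the rank-two subalgebra; I would then appeal to the known structure of rank-two minimal affinizations to evaluate the $\hat{\bu}_q^n$-action on $w_{n-1}=x^-_{n-1,0}w_{n-2}$ and conclude $\dim M_{n-1}\le m_n-a_{n,n-1}+1$.

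The hard part will be step (c) in the case $i=n-1$. For $i\le n-2$ the reduction to \propref{genrel} is clean because everything ``below'' node $i+1$ acts on $w_{i-1}$ through a one-dimensional representation, whereas for $i=n-1$ one must genuinely control the rank-two minimal affinization $\hat{\bu}_q^{\{n-1,n\}}w_{n-2}$ and verify that applying $x^-_{n-1,0}$ and then iterating $x^-_{n,\bullet}$ produces nothing beyond the expected $m_n-a_{n,n-1}+1$ dimensions. This is precisely where the hypothesis $c_1/c_n=q^{\pm N}$ enters, via the string concatenation in (b) and the center alignment just described; making this estimate precise, ideally through an explicit weight-space count inside the rank-two module, is the crux of the argument.
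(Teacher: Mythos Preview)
Your strategy for steps (a) and (b) is correct and matches the paper's (i) and (ii). Your handling of step (c) for $1\le i\le n-2$ via the vanishing $x^-_{i+1,r}w_{i-1}=0$ and the Drinfeld relation is also sound, and in fact amounts to proving the single extra relation $(x^-_{i+1,1}-\kappa\,x^-_{i+1,0})w_i=0$ required by Proposition~\ref{genrel}.

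The genuine gap is at $i=n-1$. You correctly identify that your method breaks down because $x^-_{n,r}w_{n-2}\ne 0$, but your proposed fix---appealing to the structure of rank-two minimal affinizations---is vague and not self-contained. The paper avoids this entirely by a different mechanism that works uniformly for every $i$, including $i=n-1$: rather than bounding $\dim M_i$, one proves the relation
\[
(x^-_{n,1}-c_nq_n^{m_n}x^-_{n,0})\,w_{n-1}=0
\]
directly inside $V(\bpi)$, by showing that the left side is annihilated by every $x^+_{j,r}$. Since $V(\bpi)$ is simple, any element of $V(\bpi)^+$ of weight $\ne\wt\bpi$ is zero, and the weight here is $\wt\bpi-\alpha_1-\cdots-\alpha_n$. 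Checking $x^+_{j,r}z=0$ is routine: for $j=n$ one uses (ii) for $w_{n-1}$ together with $x^+_{n,r}w_{n-1}=0$; for $j=n-1$ one commutes $x^+_{n-1,r}$ past $x^-_{n,\bullet}$, reduces to a scalar multiple of $(x^-_{n,1}-c_nq_n^{m_n}x^-_{n,0})w_{n-2}$, and observes this equals $x^-_{n-2,0}\cdots x^-_{1,0}(x^-_{n,1}-c_nq_n^{m_n}x^-_{n,0})v(\bpi)=0$ by the node-$n$ base case; for $j<n-1$ one uses the commutation argument you already have. This ``singular vector in a simple module must vanish'' trick is the missing idea; it removes the need for any rank-two input and gives a clean self-contained proof.
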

\begin{pf} To prove the proposition we see from Proposition \ref{genrel} that we must show the following:
\begin{enumerit}\item[(i)] $x_{i+1,r}^+w_i=0$,
\item[(ii)]\begin{gather}\label{eigenvw1} h_{1,r}w_{0}= \frac{[rm_1]_{\scriptsize 1}}{r}c_1^rw_0,\ \qquad
 h_{n,r}w_{n-1}= q_n^{ra_{n,n-1}}\frac{[r(m_n-a_{n,n-1})]_n}{r}c_n^rw_{n-1}\\
\label{eigenvw2}  h_{i,r}w_{i-1}= -\frac{[ra_{i,i-1}]_i}{r}c_{i}^rw_{i-1}, \ \ 1<i<n,
 \end{gather}
\item[(iii)]  \begin{gather} (x_{1,1}^--c_1q_1^{m_1}x_{1,0}^-)w_0=0,\ \ (x_{n,1}^--c_nq_n^{m_n}x_{n,0}^-)w_{n-1}=0,\\ (x_{j,1}^--c_{j}q_j^{-a_{j,j-1}}x_{j,0}^-)w_{j-1}=0, 1<j<n.
     \end{gather}
    \end{enumerit}
    Part (i) is trivial. We prove (ii) and (iii) simultaneously by an induction on $i$. Notice that induction begins at $i=1$ by Lemma \ref{nontriv} and Proposition \ref{genrel}.  Suppose that we have proved (ii) for $1<i\le n$ and (iii) for $1<i-1<n$.  We prove (iii) for $i<n$ by showing that\begin{equation}\label{iii} x_{j,r}^+(x_{i,1}^--c_{i}q_i^{-a_{i,i-1}}x_{i,0}^-)w_{i-1}=0,\  \ j\in I, r\in\bz_+.\end{equation}
    For this, writing $\wt\bpi=m_1\omega_1+m_n\omega_n$, we have \begin{equation} \label{jnot1n} x_{j,r}^-w_0=0,\ \ j\ne 1,n,\ \ r\in\bz.\end{equation}
We claim that \begin{equation}\label{hw} x_{j,r}^+w_i=0,\  \  j\ne i,\ \ (i,j)\ne (n,n-1),\ \ r\in\bz^\times.
\end{equation}Since $[x_{j,r}^+, x_{k,0}^-]=0$ if $j\ne k$, it is clear that the claim is true if $j>i$. If $j<i$, then we see that $$x_{j,r}^+w_i= x_{i,0}^-\cdots x_{j+1,0}^-\left(\frac{\phi_{j,r}^\pm}{q-q^{-1}}\right) x_{j-1,0}^-\cdots x_{1,0}^-w.$$ It is easily seen from the defining relations of $\hat{\bu}_q$ that  $$[x_{j+1,0}^-,\phi^\pm_{j,r}]\in\sum_{s\in\bz}\hat{\bu}_q^-x^-_{j+1,s},\qquad [x_{j+1,s}^-,  x_{\ell,0}^-]=0,\ \ \ell<j+1,$$ and \eqref{hw} follows  now by using \eqref{jnot1n}.
It is now clear that \eqref{iii} follows from \eqref{hw}  for $i\ne j,j-1$. For $i=j$ it holds from \eqref{eigenvw2} and  \eqref{e:ev(phi_r)}. If $i=j-1$, then we use the same argument as the one given for establishing \eqref{hw} to see that $$x_{j-1,r}^+(x_{j,1}^--c_{j}q_j^{-a_{j,j-1}}x_{j,0}^-)w_{j-1}\in\hat{\bu}_q(x_{j,1}^-
 -c_{j}q_j^{-a_{j,j-1}}x_{j,0}^-)w_0=0.$$
 This completes the proof of (iii) with $i<n$. For $i=n$ one proves using similar arguments that
\begin{equation}\label{iiin} x_{j,r}^+(x_{n,1}^--c_nq_n^{m_n}x_{n,0}^-)w_{n-1}=0,\  \ j\in I, r\in\bz_+.\end{equation}
We omit the details.

It remains to prove that (ii) holds for $i+1\le n$. If $i+1<n$, then we use
$$h_{j+1,r}w_j=-\frac{[ra_{j+1,j}]_{j+1}}{r} x_{j,r}^-w_{j-1}=-\frac{[ra_{j+1,j}]_{j+1}}{r}(c_{j}q_j^{-a_{j,j-1}})^rw_j, \ \ 1\le j<n-1,$$
while if $i+1=n$
\begin{align*}
h_{n,r}w_{n-1} &=(-\frac{[ra_{n,n-1}]_{n}}{r} x_{n-1,r}^- + x_{n-1,0}^-h_{n,r})w_{n-2}\\
 &=(-\frac{[ra_{n,n-1}]_{n}}{r}(c_{n-1}q_{n-1}^{-a_{n-1,n-2}})^r + \frac{[rm_n]_n}{r}c_n^r) w_{n-1}\\
 &=(-\frac{[ra_{n,n-1}]_{n}}{r}(c_nq_{n}^{a_{n,n-1}}q_n^{-m_n})^r + \frac{[rm_n]_n}{r}c_n^r) w_{n-1}
\end{align*}
as required.
 \end{pf}

\subsection{}
Suppose that  $V$ is a nontrivial self-extension of $V(\bpi)$:
\begin{equation}0  \to V(\bpi) \overset \iota\to V\overset \tau \to V(\bpi) \to 0\nn.\end{equation} To prove Proposition \ref{minextreme} we must show that we have an isomorphism of $\hat{\bu}_q$--modules $$V\cong \be(V(\bpi)).$$ It is enough by Corollary \ref{top} to prove that $$V_{\wt\bpi}\cong \be(V(\bpi))_{\wt\bpi},$$ as modules for $\ba_{\wt\bpi}$ and in fact it is enough to prove that they are isomorphic as $\hat{\bu}_q^0$-modules.
The proposition is a consequence of the following Lemma. Once the Lemma is proved it is clear that the map $\be(V(\bpi))_{\wt\bpi}\to V_{\bpi}$ sending $(v(\bpi),0)\to \tilde w_0$ is an isomorphism of $\hat{\bu}_q^0$--modules.
\begin{lem}\label{walkprop} There exists a basis $\tilde w_0, w_0$ of $V_{\wt\bpi}$ such that
$$h_{i,r}\tilde w_0= h_{i,r}(\bpi) (\tilde w_0 + r w_0),\ \quad h_{i,r}w_0=h_{i,r}(\bpi) w_0, \ \  \ i\in I, r\in\bz^\times.$$
\end{lem}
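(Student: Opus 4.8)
The plan is as follows. Let $V$ be a non-trivial self-extension of $V(\bpi)$. By \lemref{nontriv} pick $\tilde w_0\in V_{\wt\bpi}$ that is not an $\ell$-weight vector, and set $w_0:=\iota(v(\bpi))$, an $\ell$-weight vector of $\ell$-weight $\bpi$ which together with $\tilde w_0$ spans the two-dimensional space $V_{\wt\bpi}$. Since $V_{\wt\bpi+\alpha_i}=0$ the vector $\tilde w_0$ lies in $V^+$, so $h_{i,r}\tilde w_0=h_{i,r}(\bpi)\tilde w_0+b_{i,r}w_0$ for scalars $b_{i,r}$, and the Lemma amounts to producing a single $\lambda\in\bc^\times$ with $b_{i,r}=r\lambda\,h_{i,r}(\bpi)$ for all $i\in I$, $r\in\bz^\times$ (then $\{\tilde w_0,\lambda w_0\}$ is the required basis, and $\lambda\ne0$ because $\tilde w_0$ is not an $\ell$-weight vector). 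For $i\notin\supp\bpi=\{1,n\}$ this is immediate: $\langle\wt\bpi,\alpha_i^\vee\rangle=0$ and $\wt\bpi$ is the highest weight of $V(\bpi)$, so weight considerations give $V_{\wt\bpi-k\alpha_i}=0$ for $k\ge1$; hence $x^\pm_{i,r}$ kill $V_{\wt\bpi}$, the relation $[x^+_{i,r},x^-_{i,s}]$ makes $\phi^\pm_{i,m}$ act by $0$ on $V_{\wt\bpi}$ for $m\ne0$, and the identity $\sum_m\phi^\pm_{i,\pm m}u^m=k_i^{\pm1}\exp(\pm(q_i-q_i^{-1})\sum_{s>0}h_{i,\pm s}u^s)$ — the exponential being polynomial because the $h_{i,s}$ act nilpotently on the two-dimensional space $V_{\wt\bpi}$ — then forces $b_{i,r}=0$.

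For the two nodes of $\supp\bpi$ I would descend the chain $w_0,w_1,\dots,w_{n-1}$ of \er{ellwt} and the chain of lifts $\tilde w_i:=x^-_{i,0}\cdots x^-_{1,0}\tilde w_0$. First: for each $i$ along the chain the space $V(\bpi)_{\wt w_{i-1}-k\alpha_i}$ is spanned by the elements $x^-_{i,r_1}\cdots x^-_{i,r_k}$ applied to $w_{i-1}$ — any monomial of that weight acting nontrivially on $v(\bpi)$ must, since the diagram is a segment and $x^-_{j,r}v(\bpi)=0$ for $j\notin\{1,n\}$, run down exactly the chain — so by the Proposition of \secref{ss:stepdownsimple} it has the same dimension as $(\hat\bu_q^i w_{i-1})_{\wt w_{i-1}-k\alpha_i}$; as $V$ has weight multiplicities twice those of $V(\bpi)$, a dimension count shows $\hat\bu_q^i\tilde w_{i-1}$ is a self-extension of the Kirillov--Reshetikhin module $\hat\bu_q^i w_{i-1}$ over the $A_1$-algebra $\hat\bu_q^i$. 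By \propref{reduction}(i) and \lemref{lindep} any non-split self-extension of $\hat\bu_q^i w_{i-1}$ is $\cong\be(\hat\bu_q^i w_{i-1})$, so reading off the isomorphism (and allowing the split case) yields a scalar $\lambda_i$ with
\[ h_{i,r}\tilde w_{i-1}=\epsilon_{i,r}\bigl(\tilde w_{i-1}+r\lambda_i\iota(w_{i-1})\bigr),\qquad x^-_{i,r}\tilde w_{i-1}=\rho_i^{\,r}\bigl(\tilde w_i+r\lambda_i\iota(w_i)\bigr), \]
where $\epsilon_{i,r}$ and $\rho_i$ are the scalars by which $h_{i,r}$ and $x^-_{i,1}$ (with $x^-_{i,1}w_{i-1}=\rho_i w_i$) already act in $V(\bpi)$, namely those of \er{eigenvw1}, \er{eigenvw2} and part (iii) of that Proposition; in particular $\epsilon_{i,r}\ne0$ for $r\in\bz^\times$ and $\rho_{i-1}=c_i$ by the recursion defining the $c_i$. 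The case $i=1$ of the first relation is the asserted formula with $\lambda=\lambda_1$, and the case $i=n$ applied to $w_0$ gives it with $\lambda=\lambda_n$.

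It remains to prove $\lambda_1=\lambda_n$: this is the walk, and the only place the hypothesis on $c_1/c_n$ enters (through the Proposition of \secref{ss:stepdownsimple}, which is proved under it). For $2\le i\le n-1$, since $h_{i,r}$ commutes with $x^-_{j,0}$ for $j\le i-2$ and $h_{i,r}\tilde w_0=0$ (node $i$ being in the middle), one finds $h_{i,r}\tilde w_{i-1}=-\tfrac1r[ra_{i,i-1}]_{q_i}x^-_{i-1,r}\tilde w_{i-2}$, which by the displayed relation at node $i-1$ and $\rho_{i-1}=c_i$ equals $\epsilon_{i,r}(\tilde w_{i-1}+r\lambda_{i-1}\iota(w_{i-1}))$; comparing with the displayed formula for $h_{i,r}\tilde w_{i-1}$ forces $\lambda_i=\lambda_{i-1}$, so $\lambda_1=\lambda_2=\cdots=\lambda_{n-1}$. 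Finally, $h_{n,r}$ commutes with $x^-_{j,0}$ for $j\le n-2$, so pushing it through the chain and using the node-$n$ formula for $h_{n,r}\tilde w_0$ together with the displayed relation $x^-_{n-1,r}\tilde w_{n-2}=\rho_{n-1}^{\,r}(\tilde w_{n-1}+r\lambda_1\iota(w_{n-1}))$ gives
\[ h_{n,r}\tilde w_{n-1}=h_{n,r}(\bpi)\bigl(\tilde w_{n-1}+r\lambda_n\iota(w_{n-1})\bigr)-\tfrac1r[ra_{n,n-1}]_{q_n}\rho_{n-1}^{\,r}\bigl(\tilde w_{n-1}+r\lambda_1\iota(w_{n-1})\bigr), \]
while the displayed relation at node $n$ for $\tilde w_{n-1}$ reads $h_{n,r}\tilde w_{n-1}=\epsilon_{n,r}(\tilde w_{n-1}+r\lambda_n'\iota(w_{n-1}))$ for some $\lambda_n'$. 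Equating the coefficients of $\tilde w_{n-1}$ yields the identity $\tfrac1r[ra_{n,n-1}]_{q_n}\rho_{n-1}^{\,r}=h_{n,r}(\bpi)-\epsilon_{n,r}$ — equivalently $\rho_{n-1}=c_nq_n^{a_{n,n-1}-m_n}$, which by $\rho_{n-1}=c_1q_1^{m_1}\prod_{j=2}^{n-1}q_j^{-a_{j,j-1}}$ and $d_ja_{j,j-1}=d_{j-1}a_{j-1,j}$ is exactly $c_1/c_n=q^{\mp N}$, the point that breaks without the hypothesis. Substituting it into the coefficients of $\iota(w_{n-1})$ gives $(\lambda_n-\lambda_1)h_{n,r}(\bpi)=(\lambda_n'-\lambda_1)\epsilon_{n,r}$ for all $r\in\bz^\times$; since $h_{n,r}(\bpi)$ and $\epsilon_{n,r}$ are linearly independent functions of $r$ (they carry different Drinfeld data because $a_{n,n-1}\ne0$), this forces $\lambda_n=\lambda_1$. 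Taking $\lambda=\lambda_1$ finishes the proof. The main obstacle is this descent — carrying the single constant $\lambda_1$ without loss down the chain, and isolating the one scalar relation that absorbs the hypothesis.
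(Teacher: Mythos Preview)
Your proof is correct and follows essentially the same route as the paper's: set up the two constants at the extremal nodes $1$ and $n$, descend the chain $\tilde w_0,\tilde w_1,\dots,\tilde w_{n-1}$ propagating the constant $\lambda_1$ using the $A_1$ result (Proposition~\ref{reduction}(i)) at each intermediate node, and then compare two expressions for $h_{n,r}\tilde w_{n-1}$ to force $\lambda_1=\lambda_n$. Your final linear-independence argument for the two eigenvalue functions $h_{n,r}(\bpi)$ and $\epsilon_{n,r}$ is the same computation the paper does in expanded form.

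One point of presentation: your symbol $\lambda_n$ is doing double duty. In the displayed chain formula $h_{i,r}\tilde w_{i-1}=\epsilon_{i,r}(\tilde w_{i-1}+r\lambda_i\iota(w_{i-1}))$, the case $i=n$ would refer to the constant at $\tilde w_{n-1}$, which you later call $\lambda_n'$; whereas the $\lambda_n$ you actually need is the one coming from $\hat\bu_q^n\tilde w_0$ at the top (``the case $i=n$ applied to $w_0$''). The paper keeps these separate as $z_n$ and $z'$. It would be cleaner to reserve the chain indices $\lambda_1,\dots,\lambda_{n-1}$ for the walk and introduce the top-level node-$n$ constant with its own name from the outset.

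Your weight-space argument that $V(\bpi)_{\wt w_{i-1}-k\alpha_i}$ is spanned by $x^-_{i,r_1}\cdots x^-_{i,r_k}w_{i-1}$ is a useful addition: the paper invokes the self-extension structure at each step without this justification. Note however that this PBW-type argument, as you state it, is clean only for $i<n$ (no $\alpha_n$ in the weight, so $x^-_{n,\ast}$ never appears); at the final node the weight $\wt w_{n-1}-k\alpha_n$ can receive contributions from monomials ending in $x^-_{n,\ast}v(\bpi)\ne0$, and the relevant ambient weight space may be strictly larger than the KR weight space. What saves both your argument and the paper's is that one only needs the \emph{top} weight space $(U_n)_{\wt w_{n-1}}$ to be two-dimensional and to carry an $\ba_{m_n'}$-module structure, which follows from $x^+_{n,r}\tilde w_{n-1}=0$ and $(x^-_{n,0})^{m_n'+1}\tilde w_{n-1}=0$ (both forced by weight considerations in $V$); the Remark of \S\ref{remark} then applies.
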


\subsection{}\label{remark}  We shall use the following remark repeatedly in the proof of the Lemma. It is a special case of results proved elsewhere in this paper, we formulate it here in the precise form that it is used in the proof of the Lemma.
 \begin{rem} Suppose that $\bpi\in\cal P^+$ is such that $\dim\Ext^1_{\hat{\cal F}}(V(\bpi),V(\bpi))=1$ and let $V$ be  any self--extension of $V(\bpi)$.
  Suppose that $\tilde v,v$ is a basis of $V_{\wt\bpi}$ such that $(h_{i,r}-h_{i,r}(\bpi))v=0$ for all $i\in I$ and $r\in\bz^\times$. Then, there exists $z\in\bc^\times$ such that $$h_{i,r}\tilde v= h_{i,r}(\bpi)(\tilde v+zrv).$$ Moreover, this implies that, if $V=V_1\oplus V_2$ is a decomposition of $V$ as a direct sum of $\bu_q$-submodules isomorphic to $V(\bpi)$ with $v\in V_1$ and $\tilde v\in V_2$, the projection of $x_{i,r}^-\tilde v$ onto $V_1$ is $rzx_{i,r}^-v$.
  Finally, $V$ is nontrivial iff and only if $z\ne 0$.
\end{rem}

\subsection{}{\em Proof of Lemma \ref{walkprop}} Let $\tilde w_0$ be such that $\tau(\tilde w_0)=v(\bpi)$ and set $w_0=\iota(v(\bpi))$. If $j\ne 1,n$, then $\hat{\bu}_q^j\tilde w_0$ is the trivial representation of $\hat{\bu}_q^j$. If $j=1,n$, then by Proposition \ref{restrict2} and the results of Section 6
 we know that either  $$\hat{\bu}_q^j\tilde w_0\cong V(\pi(m_j,c_j)),$$ or $$\hat{\bu}_q^j\tilde w_0\cong\be(V(\pi(m_j,c_j)).$$
 In any case, since $\iota(v(\bpi))$ is a joint eigenvector for $h_{i,r}$, $i\in I$, $r\in\bz^\times$, it follows from Section \ref{remark} that there exists $z_1,z_n\in\bc^\times$ such that
$$h_{j,r}\tilde w_0= h_{j,r}(\bpi) (\tilde w_0 + rz_j w_0),\ \qquad\ x_{j,r}^-\tilde w_0 = (c_jq_j^{m_j})^rx_{j,0}^-(\tilde w_0+rz_jw_0),\ \ \ j=1,n. $$
Our goal is to prove that we must have $z_1=z_n$. Since $V$ is non--split this means that we can assume $z_1=z_n=1$  which would establish the Lemma.

 For $1\le i\le n$, define elements $$\tilde w_i=x_{i,0}^-\tilde w_{i-1},\ \  w_i= x_{i,0}^- w_{i-1}. $$
We now prove by induction on $1\le i\le n$ that
\begin{align*}
&h_{1,r}\tilde w_0 = \frac{[rm_1]_1}{r}c_1^r(\tilde w_0+rz_1w_0),\\
&h_{i,r}\tilde w_{i-1} = -\frac{[ra_{i,i-1}]_i}{r}c_i^r(\tilde w_{i-1}+rz_1w_{i-1}), \ \ 1<i<n.
\end{align*}
For $i=1$ this follows from the above, so induction starts. Suppose we have proved the above for $1\le i<n-1$. In particular, if $i>1$, it follows by applying Section \ref{remark} to $\hat{\bu}_q^iw_{i-1}$ that,
\begin{equation*}
x_{i,r}^-\tilde w_{i-1}= (c_iq_i^{-a_{i,i-1}})^r(\tilde w_i+rz_1w_i)= c_{i+1}^r(\tilde w_i+rz_1w_i).
\end{equation*}
The inductive step is completed by  using the preceding equation and noting that
$$h_{i+1,r}\tilde w_i= h_{i+1,r}x_{i,0}^- \tilde w_{i-1} = -\frac{[ra_{i+1,i}]_{i+1}}{r} x_{i,r}^- \tilde w_{i-1}.$$

Now observe that $\hat\bu_q^n$--submodule generated by $\tilde w_{n-1}+\hat{\bu}_q^nw_{n-1}$ is a self extension of $V(\pi(m_n-a_{n,n-1},c_nq_n^{a_{n,n-1}}))$. Hence, it follows from Section \ref{remark} that
\begin{equation}\label{eq:finalstep}
h_{n,r}\tilde w_{n-1} = (c_nq_n^{a_{n,n-1}})^r\frac{[r(m_n-a_{n,n-1})]_n}{r}(\tilde w_{n-1}+ z' rw_{n-1}) \quad\text{for some}\quad z'\in\bc.
\end{equation}
Further, since $[h_{n,r},x_{i,s}^-]=0$ if $i\ne n,n-1$, we see that
\begin{equation}
h_{n,r}\tilde w_i = c_n^r\frac{[rm_n]_n}{r}(\tilde w_i+ z_n rw_i) \quad\text{for all}\quad i<n-1.
\end{equation}
This implies
\begin{align*}
h_{n,r}\tilde w_{n-1} &=(-\frac{[ra_{n,n-1}]_{n}}{r} x_{n-1,r}^- + x_{n-1,0}^-h_{n,r})\tilde w_{n-2}\\
&=(-\frac{[ra_{n,n-1}]_{n}}{r}(c_{n-1}q_{n-1}^{-a_{n-1,n-2}})^r)(\tilde w_{n-1}+rz_1w_{n-1}) + \frac{[rm_n]_n}{r}c_n^r(\tilde w_{n-1}+ rz_nw_{n-1})\\
&= (c_nq_n^{a_{n,n-1}})^r\frac{[r(m_n-a_{n,n-1})]_n}{r}\tilde w_{n-1}\\
 &\ \ + r(-z_1\frac{[ra_{n,n-1}]_{n}}{r}(c_nq_{n}^{a_{n,n-1}}q_n^{-m_n})^r + z_n\frac{[rm_n]_n}{r}c_n^r) w_{n-1}.
\end{align*}
Comparing this with \eqref{eq:finalstep} we get
\begin{equation}
-z_1[ra_{n,n-1}]_{n}(q_{n}^{r(a_{n,n-1}-m_n)})^r + z_n[rm_n]_n = z'q_n^{ra_{n,n-1}}[r(m_n-a_{n,n-1})]_n
\end{equation}
for all $r\in\bz$. It follows that $z_1=z_n=z'$.
\hfill\qedsymbol
\bibliographystyle{amsplain}

\end{document}